\newtheorem{theor}{\indent\sc Theorem}[section]
\newtheorem{corol}[theor]{\indent\sc Corollary}
\newtheorem{lemma}[theor]{\indent\sc Lemma}   
\newtheorem{prop}[theor]{\indent\sc Proposition}
\newtheorem{dfn}[theor]{\indent\sc Definition}
\newtheorem{example}{\indent\sc {Example}}
\newcommand{\real }{{\mathbb R}}
\newcommand{\lpr}[2]{\langle{#1},{#2}\rangle}
\newcommand{\mink}{{\real}_{1}^{4}}
\newcommand{\spac}{{\real}_{2}^{4}}
\newcommand{\euclidean}{{\mathbb{E}}^{3}}
\newcommand {\B}{{\mathcal B}}
\newcommand {\E}{{\mathcal E}}
\newcommand {\C}{{\mathcal C}}
\newcommand {\F}{{\mathcal F}}
\begin{document} 
\title{Geodesic Lines in Fields of Velocity}
\author{M. P. Dussan \; and \; A. P. Franco Filho} 
\footnote{Corresponding author: \  dussan@ime.usp.br (M.P. Dussan), apadua@ime.usp.br (A.P. Franco Filho)}

\maketitle

\centerline{\small{ Universidade de S\~ao Paulo, Departamento de
Matem\'atica - IME}}
\centerline{\small{CEP: 05508-090. S\~ao Paulo. Brazil}}

%\date{}
%\begin{document} 

\begin{abstract} 
This work is a purely syntactic geometric exploration of some few elements, which are our axioms, 
that in last instance it is the set of differential equations whose  solutions give the geodesic lines of the 
Schwarzschild spacetime. We observe that non new physics principles or postulates will be introduced in this work. 
We only link the Bohr's atoms model with the Einstein's relativity through of a common geometric syntax. 
To obtain this common syntax, we will define the {\it extended Lorentz group}, which is defined to preserve the 
volume form of the Minkowski spacetime. The Schwarzschild spacetime will be defined as a manifold associated to a 
set of radial fields of velocities within of the four-dimensional Minkowski vectorially space form. 
Our procedure includes a comparison of the Newtonian and the Schwarzschild times along geodesic lines. 
Our constructions have strong influence of the Einstein paper about the energy content produced by fields, 
as well as by the Schr\"odinger digression about the annihilation of matter. We can say that the first is a mechanic 
or pointwise form of the mass-energy and the second is a global form or thermodynamic of mass-energy.  
We define the orbital associated to the Kepler's laws as a set of elliptical orbits, which have equal eccentricity and equal 
major semi-axis.  Then identifying the eccentricity with the relativistic velocity we will obtain a thermodynamic 
equivalence between the increasing of mass in kinetic form in special relativity theory and an adiabatic process 
with degree of freedom equal to 2. The eccentricity will be the needed velocity  to move the revolution ellipsoid and so to obtain 
a contraction of its major axis such that it converts into a sphere with radius given by the minor semi-axis. 
Therefore we can associate to the each class of equal eccentricity orbital an unique timelike unit vector, 
which is called {\it the observer} of class. 
\end{abstract}
\maketitle 

\vspace{0.2cm}
Keywords: General Relativity; Schwarzschild spacetime; Bohr atom; Geodesic.

\vspace{0.1cm}
MSC: 53Z05; 83C05; 83C15; 53B30; 53C50

\section{Introduction and Some Constructions} 

Throughout the text, we will cite the bibliography to make clear the origin of the propositions that we used. 
Basic relativistic results can be found in \cite{O'N} and \cite{LEM}, and the Bohr's postulates are found in the reference
\cite{P}. In \cite{DFS} a set of propositions about spacelike surfaces can be found, in particular one knows from \cite{DFS} 
that totally umbilical spacelike surfaces can be characterized as sections of an affine spacelike hyperplane and of 
the lightcone of $\mink$. On \cite{Schr} the reader can find a set of results about application of statistical thermodynamic 
to a set of material systems or atoms.  

When the result or axiom is already part of our folklore we dispense bibliographic citations. 
For instance Kepler's Laws are found in many books relevant to the subject and so on. Because Kepler's Laws will be one of 
principal assumptions in this work, we will begin our introduction studying these  laws. We reaffirm that we are interested 
in the syntax of these laws and  their realist meaning, for this work, is not relevant. Furthermore, we will assume that the 
three Kepler's laws are universal and their validity does not depend on who observes or measures them.

\vspace{0.1cm}
\subsection{First Kepler's Law} For a planet in our Sun system this law says: 
\ \lq \lq {\it The orbit of each planet is an elliptical (plane) trajectory, where the Sun has one of focal positions}." 

\subsection{Second Kepler's Law and consequences} For a planet in our Sun system this law says: 
\ \lq \lq {\it In a planetary elliptical motion the line segment from the focal Sun position until 
the planet, sweeps equal areas in equal times."}

\vspace{0.1cm}
A simple consequence of this law is that circular orbits has constant speed. This suggests the following statement:

\lq \lq When, the trajectory of a planet is a circumference (as was assumed by Copernicus), the planet moves with constant speed". 

\vspace{0.2cm}
Now, since by definition, velocity is the ratio of the arc length of between two points of trajectory by the time spent 
to run this arc, we need to have a machine to measure the time. 
{\it This machine is given by central angles in circumferences}, as we will construct in the next.

\subsection{A time machine} Our clock will be a small circumference with a point which moves 
with constant speed and with an angle division such that $\Delta \theta$ corresponds to a $1 second$ when 
$\Delta \theta = \frac{2\pi}{3600}$,  and one day is equal to $ 86400 \Delta \theta.$

\vspace{0.1cm}
Now, taking the equation of an ellipse  $\E$ in an appropriated coordinate systems $(\hat{M},x,y)$, we have 
$$\left(\frac{x}{a}\right)^{2} + \left(\frac{y}{b}\right)^{2} = 1,$$ 
where we are assuming $a \geq b > 0$ and $c = \sqrt{a^{2} + b^{2}}$.  The points $A' = (-a,0)$ and $A = (a,0)$ 
are usually called by {\it aphelion} and {\it perihelion} of the planet, and the focus are 
$F' = (-c,0)$ and $F = (c,0)$.  The other two notable points are $B' = (0,-b)$ and $B = (0,b)$. 
The origin $\hat{M}$ is the middle point of segments $F'F$, $A'A$ and $B'B$. 

\vspace{0.1cm}
{\it We will assume that the solar position is $F'$ in all this section.}

\vspace{0.2cm}

The equation in the following lemma is a syntax of the second Kepler's Law. 
 Essentially the acceleration vectors along the trajectory are pointing to origin of coordinate system. 
The origin of coordinate system can be either the center of ellipse $O$ or the solar position $F'$. The syntax in both cases 
is a common syntax, that is an unique differential equation obtained by the condition that the acceleration vector, 
at each point of the trajectory is pointing to center of coordinate system.

\vspace{0.1cm}
\begin{lemma}\label{1}
Let $\gamma(t) = r(t) (\cos \theta(t), \sin \theta(t))$ be a parametric smooth curve of the Euclidean plane $\real^{2}$. 
Its acceleration vector $\gamma''(t)$ points to origin $O$ if, and only if, 
$$r^{2} \frac{d \theta}{dt} = L \; \; \mbox{ for a real number } \; \; L \in \real.$$ 
\end{lemma}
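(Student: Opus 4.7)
The plan is to work in the moving orthonormal frame $e_r = (\cos\theta,\sin\theta)$, $e_\theta = (-\sin\theta,\cos\theta)$ attached to the curve. First I would record the elementary identities $\frac{de_r}{dt} = \theta'\, e_\theta$ and $\frac{de_\theta}{dt} = -\theta'\, e_r$, which come directly from differentiating the trigonometric definitions. Writing $\gamma = r\, e_r$ and differentiating twice with the product rule yields the standard polar decomposition
$$\gamma''(t) = \bigl(r'' - r(\theta')^2\bigr)\, e_r \;+\; \bigl(2r'\theta' + r\theta''\bigr)\, e_\theta.$$

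Next I would observe that since $e_r$ points along the position vector and $e_\theta$ is orthogonal to $e_r$, the acceleration $\gamma''(t)$ is parallel to the line joining $\gamma(t)$ to the origin (that is, ``points to $O$'') if and only if the $e_\theta$-component vanishes, i.e.\ $2r'\theta' + r\theta'' = 0$ along the trajectory.

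The key algebraic step is to recognize this tangential component as a total derivative. Specifically,
$$2r'\theta' + r\theta'' \;=\; \frac{1}{r}\,\frac{d}{dt}\!\left(r^2\,\frac{d\theta}{dt}\right),$$
which makes both implications immediate: if the tangential term vanishes, then $r^2 \theta'$ has zero derivative and is therefore a constant $L$; conversely, if $r^2 \theta' \equiv L$, then differentiating shows the tangential term is zero, so $\gamma''$ is radial.

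The only real subtlety, and the step I would be most careful about, is the role of points where $r=0$: the factor $1/r$ in the identity above is formal, but the equivalence of the polynomial expressions $2r'\theta' + r\theta'' = 0$ and $\frac{d}{dt}(r^2\theta') = 0$ holds pointwise without division, and the constant $L$ is then recovered by integration on each connected interval where $\gamma$ is smooth. In the setting of this paper the trajectory is a Keplerian ellipse not passing through the focus, so $r>0$ throughout and this issue does not arise.
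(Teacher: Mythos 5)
Your proof is correct and follows essentially the same route as the paper: decompose $\gamma''$ in the polar frame, note that pointing to the origin means the transverse component $2r'\theta' + r\theta''$ vanishes, and identify that component with $\frac{1}{r}(r^2\theta')'$. Your extra remark about the degenerate case $r=0$ is a sensible refinement the paper omits, but the argument is otherwise identical.
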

\begin{proof}
From $\gamma' = r'(\cos \theta, \sin \theta) + r \theta' (-\sin \theta,\cos \theta)$ it follows that 
$$\gamma'' = (r'' - r (\theta')^{2})(\cos \theta, \sin \theta) + (r \theta'' + 2 r' \theta')(-\sin \theta,\cos \theta).$$ 
Therefore, $\gamma''(t)$ is parallel to $\gamma(t)$ if and only if $(r^{2} \theta')' = 0.$ 
\end{proof} 
\vspace{0.1cm}
\begin{corol}
Assuming that $\beta(t) = \rho(t) (\cos \vartheta(t),\sin \vartheta(t))$ parametrizes an elliptical orbit with center $M$ with
polar coordinates centered in its focus $F'$, it follows that

\vspace{0.1cm}
\begin{enumerate} 

\item If the time interval $\Delta t$ is equal when measured by  two equal clocks, one centered in $F'$ and the 
other in $M$, the correspondent elliptical arc measures, $s$ and $s'$, corresponding to each clock, 
are different each other. 

\item The acceleration $\beta''(t)$ is parallel to the vector position $\beta(t) - F'$ if, and only if, for constant time intervals 
$\Delta t = s$, one has 
$$\frac{\partial}{\partial t} \int_{t}^{t + s} \rho^{2} \vartheta' dt = 0 \; \; \mbox{ if, and only if, } \; \; 
\rho^{2} \vartheta' = L_{F'} \in \real.$$ 
\end{enumerate}
Therefore, the first and the second Kepler's Law hold, if, and only if, the statement (2) holds. 
Equal angles in equal times sweep equal areas.  
\end{corol}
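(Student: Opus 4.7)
The plan is to reduce statement (2) to Lemma 1.1 by translating the origin to $F'$, and then identify (2) with the first two Kepler laws through the familiar area element in polar coordinates; statement (1) follows from the observation that the ellipse has genuinely different polar representations when viewed from its center and from its focus.

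For (1), I would write down the two polar equations of the ellipse, namely $\rho(\vartheta)=a(1-e^{2})/(1+e\cos\vartheta)$ centered at $F'$ and $r(\theta)=ab/\sqrt{b^{2}\cos^{2}\theta+a^{2}\sin^{2}\theta}$ centered at $M$. Since each clock measures time through equal increments of its own central angle, the same $\Delta t$ yields equal increments of $\vartheta$ on the $F'$-clock and equal increments of $\theta$ on the $M$-clock. The corresponding elliptical arcs then have lengths $s=\int\sqrt{\rho^{2}+(d\rho/d\vartheta)^{2}}\,d\vartheta$ and $s'=\int\sqrt{r^{2}+(dr/d\theta)^{2}}\,d\theta$, and a pointwise comparison of the integrands at, say, perihelion and the endpoints of the minor axis shows they cannot coincide identically, whence $s\neq s'$ in general.

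For (2), I would set $\alpha(t)=\beta(t)-F'=\rho(t)(\cos\vartheta(t),\sin\vartheta(t))$. Because $F'$ is a constant vector, $\alpha''(t)=\beta''(t)$, and so $\beta''$ is parallel to $\beta-F'$ if and only if $\alpha''$ is parallel to $\alpha$. Lemma 1.1, applied to $\alpha$ in the translated coordinate system whose origin is $F'$, then yields directly the equivalence with $(\rho^{2}\vartheta')'=0$, that is, with $\rho^{2}\vartheta'=L_{F'}\in\real$. The intermediate integral formulation is just the fundamental theorem of calculus in disguise:
\[
\frac{\partial}{\partial t}\int_{t}^{t+s}\rho^{2}\vartheta'\,du=\rho^{2}\vartheta'(t+s)-\rho^{2}\vartheta'(t),
\]
whose vanishing for all $t$ and all (small) $s$ is precisely the constancy of $\rho^{2}\vartheta'$.

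Finally, to identify (2) with Kepler's first and second laws: the first law is built into the hypothesis that $\beta$ traces an ellipse with $F'$ at a focus. The sector area swept by the focal radius in polar coordinates satisfies $dA/dt=\tfrac{1}{2}\rho^{2}\vartheta'$, so the second law (equal areas in equal times) becomes exactly $\rho^{2}\vartheta'=L_{F'}$. The main conceptual obstacle, and really the only subtle point, is in (2): one must recognize that the translation invariance of the acceleration, $\alpha''=\beta''$, is what legitimizes applying Lemma 1.1 with origin at $F'$ instead of $\hat{M}$; once this is noted, everything else is bookkeeping.
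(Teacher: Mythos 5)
Your proposal is correct and follows exactly the route the paper intends: the paper states this corollary without any written proof, leaving it as an immediate consequence of Lemma 1.1, and your argument (translation invariance $\alpha''=\beta''$ to move the origin to $F'$, the fundamental theorem of calculus for the integral formulation, and $dA/dt=\tfrac{1}{2}\rho^{2}\vartheta'$ for the second law) is the natural filling-in of that omitted derivation. The only caveat is in item (1), where your pointwise comparison of the arc-length integrands tacitly requires $b<a$ strictly, since for a circle the two clocks coincide; this matches the paper's standing assumption of a genuine ellipse.
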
 
%\vspace{0.1cm}

\subsection{The third Kepler's Law} Kepler's third Law says: {\it \lq \lq The ratio of the squares of the periods of 
the orbits of two planets is equal to the ratio of the cubes of the segments connecting the Sun and the respective planets."} 
 
\vspace{0.2cm}
The area of $\E$ does not depend of the internal segment linking an internal point $P$ and the point $\gamma(t)$. 
Moreover, in an one period the line $[P,\gamma(t)]$ sweeps the area of $\E$. 

Since the average of the pointwise speed in one cycle, is the ratio of length $l(\E)$ of $\E$ by the period $T$, from 
Calculus we have 
$$l(\E) = a \int_{0}^{2\pi} \sqrt{1 - e^{2}\cos^{2} \theta}  d \theta,$$ 
where we are taking $\gamma(t) = (a \cos \theta(t), b\sin \theta(t))$ centered in the center of the elliptical motion. 
Then, by the mean value Theorem for Riemann integral, there exists a circumference with radius $\overline{r} \in [b,a]$ 
with equal length of $\E$. Since the time is given by our clock machine, which is the angle $\theta$, we obtain for 
each circumference in question that  $l(\E) = 2 \pi \overline{r}$, 
where the average speed is a constant for each circumference. Therefore it follows that
\begin{equation}
\left(\frac{T_{1}}{T_{2}}\right)^{2} = \left(\frac{R_{1}}{R_{2}}\right)^{3} \; \; \ \ \mbox{ or } \; \; \ \ 
\frac{(T_{1})^{2}}{(R_{1})^{3}} = \frac{(T_{2})^{2}}{(R_{2})^{3}} = \mbox{constant}.
\end{equation} 

\vspace{0.2cm}

Now, if $V_{orb}$ is the average speed of an orbit, we can write for a planet with circular orbit the equation  
$$
V_{orb} = \sqrt{\frac{\alpha}{r}},
$$

\vspace{0.2cm}
that by Newton's gravitational theory $\alpha = MG$, where $M$ is the mass of Sun and $G$ is the gravitational constant. 
Moreover, the relation between the  escape speed and the orbital speed in a given point of distance $r$ of the Sun, is given by 
the Newtonian mechanic energy conservation in each stellar system, as being given by
\begin{equation}
V_{{escape}} = \sqrt{2} \; V_{{orbit}}. 
\end{equation} 

\vspace{0.1cm}

\subsection{Adiabatic process and relativistic contraction of distances} We observe that this subsection has a great 
influence of Digression topic about annihilation of matter described by Schr\"odinger in \cite{Schr} page 62.

\vspace{0.2cm}
Let $a$ and $b$ the lengths of the semi-axis of $\E$, $0 < b < a$, and let $e$ be its eccentricity, defined by  
$$e = \sqrt{1 - \left(\frac{b}{a}\right)^{2}} = \tanh \phi.$$ 
Let $\E$ be the revolution ellipsoid of a set of 
Kepler orbits associated to a relativistic speed $e = v/c$ with direction of the major axis of $\E$ such that the relativistic 
contraction of distances gives a sphere $\overline{\E}$.
 
We will assume that the increasing of mass in a particle obeys the Einstein definition of kinetic energy  
$mc^{2} =  \frac{m_{0} c^{2}}{\sqrt{1 - \beta^{2}}}$, and this increasing of energy (dues a relative motion between  
two inertial referentials), has relation in thermodynamics adiabatic processes with the Planck-Einstein energy content, 
$k T = mc^{2} = h \nu$, where $k$ the Boltzmann constant of perfect gas, $\nu$ is the linear frequency and $h$ 
is the Planck constant (\cite{P}).

\vspace{0.2cm}

Now, we will assume that the contraction of $\E$ to obtain $\overline{\E}$, is an adiabatic process of an insulated 
perfect gas, non heat is exchanged with the environment.  Our Sun moves within our Galaxy which it is moving in 
the Universe, the Cosmological Environment.

\vspace{0.2cm}

Let $P V^{\lambda} = P_{0} V_{0}^{\lambda}$ be an adiabatic process  where $\lambda$ is the adiabatic coefficient and 
$(P,V)$ and $(P_{0},V_{0})$ are the respectives pressure and volume in $\overline{\E}$ and $ \E$.  Hence, since 
$V_{0} = (4 \pi a b^{2})/3$ and $V = (4 \pi b^{3})/3$, from the relations $PV = nRT$ and 
$TV^{\lambda - 1} = T_{0} V_{0}^{\lambda - 1}$, it follows that  
$$V = V_{0} \frac{b}{a} = V_{0}\sqrt{\frac{a^{2} - c^{2}}{a^{2}}} = V_{0}\sqrt{1 - e^{2}} = \frac{V_{0}}{\cosh \phi} 
\; \; \  \mbox{ and } \; \; \ T = \frac{T_{0}}{(\sqrt{1 - e^{2}\;})^{\lambda -1}}.$$ 
Therefore from $m c^{2} = k T$ it follows that $m = m_{0} \cosh^{\lambda - 1} \phi$.  Then 
$m = m_{0} \cosh \phi$ holds if, and only if, the adiabatic constant is $\lambda = 2$ if, and only if, 
its degree of freedom is $2$.

\vspace{0.1cm}
We note that usually the degree of freedom for adiabatic processes in a mono-atomic perfect gas is $f = 3$, which implies 
$\lambda = (f + 2)/f = 5/3$. In our case, the fact $\lambda = f = 2$ means that in direction of motion we 
can have the longitudinal mass by the relativistic addition of velocity, and that in transverse direction the molecules 
preserve its degree of freedom. We have lost one of the freedom  direction in the 
decomposition of average speed in Maxwell kinetic theory of perfect gas, $\vec{v} = (v_{1},v_{2},v_{3})$. 

\vspace{0.2cm}
{\it The adiabatic (global) increasing of the energy content can be identified with relativistic (local) increasing of mass, 
where the value $m/m_{0}$ does not depend of the axis of the orbital ellipsoidal set $\E$ of Kepler orbits. It depends only 
of the eccentricity of $\E$}. 

\vspace{0.3cm}
From here, this paper is divided in three sections. The first (Section 2) is dedicated to the Minkowski 
vectorially space form $\mathbb{R}^{4}_{1}$, that using its Lorentzian coordinates, we then give an axiomatic construction of the 
Schwarzschild spacetime and we construct special totally umbilic spacelike surfaces in $\mathbb{R}^{4}_{1}$ which result from the 
intersection between of lightcone with an spacelike affine hyperplane orthogonal to an observer. In addition, 
for moving us in different Minkowski referentials in such way that the volume form and the future directed are preserved we 
establish the Generalized Lorentz group. Moreover, as applications we obtain some results about the $Z$-atom of Bohr.

In Section 3 we focus in studying the four dimensional Lorentz Manifolds $(M,\mathbf{g})$ and the geodesic lines. 
In particular we study the radial geodesic lines parametrized by a proper parameter and  consider  two types of them, namely, 
the null and the timelike radial geodesics.  
The second part of this section is focused to the non-radial geodesic lines, included the equatorial geodesics. 
In both of cases of radial or non-radial geodesics we look for solutions and by relations between the Schwarzschild 
and Newton times as well as the orbital velocities in that two times.

Section 4 is devoted for the construction of a new model of parametric Lorentzian spacetime $(M',\mathbf{g})$, 
which satisfies the Einstein field equation, and which is not flat and has Ricci tensor non-null. 
We calcule then the Einstein gravitational tensor. We also study the geodesic equations and we look for solutions. 
As long as we go, we prove  in this new model that the existence of circular geodesic motions are not possible.

\section{The Minkowski vectorially space form $\mink$} 
Let $\mink$ be the Minkowski four-dimensional vector space. It is a topological locally compact four-dimensional real 
vector space equipped with the symmetric bilinear form given by: 
$$ds^{2} = -(dx^{0})^{2} + (dx^{1})^{2} +(dx^{2})^{2} +(dx^{3})^{2}.$$

It associated to this quadratic form for $\mink$ we have the Lorentzian product  
$$\lpr{u}{v} = -u^{0} v^{0} + u^{1} v^{1} + u^{2} v^{2} + u^{3} v^{3},$$ 
relatives to the canonical base of vector space $\real^{4}$, given by the elements 
$\partial_{0} = (1,0,0,0)$, $\partial_{1} = (0,1,0,0)$, $\partial_{2} = (0,0,1,0)$ and  $\partial_{3} = (0,0,0,1).$  

\vspace{0.2cm}
A {\it Minkowski frame} or  referential is a subset of four unit vectors $\B = \{e_{0},e_{1},e_{2},e_{3}\} \subset \mink$ such that: 

(1) $\lpr{e_{i}}{e_{j}} = 0$ whenever $i \neq j$. 

(2) $\lpr{e_{0}}{e_{0}} = -1$ and $\lpr{e_{i}}{e_{i}} = 1$ whenever $i = 1,2,3$. 

(3) $\lpr{e_{0}}{\partial_{0}} = - e_{0}^{0} < 0$. 

In this case, we say that the vector $e_{0}$ is the future directed timelike vector 
of the frame $\B$ of $\mink$. 

\vspace{0.2cm}
For each frame $\B$ we define a co-frame $\B^{\star} = \{dy^{0},dy^{1},dy^{2},dy^{3}\}$ such that 
$$dy^{i}(e_{j}) = \lpr{e_{i}}{e_{j}} \; \; \mbox{ for each } \; \; dy^{i} \in \B^{\star} \; \; \mbox{ and } \; e_{j} \in \B,$$ 
and it defines the orientation of the Minkowski space taking the $4$-volume form:
$$\Omega(\mink) = - dy^{0} \wedge dy^{1} \wedge dy^{2} \wedge dy^{3}.$$ 

Next we will define the set of transformations that preserves the Lorentz product, 
so preserves the volume form. 

For each Minkowski frame $\B = \{e_{0},e_{1},e_{2},e_{3}\}$ the unit future directed timelike vector 
$e_{0} \in \B$ will be called {\it observer} and its associated {\it rest-space} is given by
$$\euclidean(e_{0}) = [e_{1},e_{2},e_{3}] = \{u^{1} e_{1} + u^{2} e_{2} + u^{3}e_{3} : (u^{1},u^{2},u^{3}) \in \real^{3}\}.$$ 

\begin{dfn}
A Lorentz transformation is a linear transformation $L : \mink \to \mink$ which preserves Minkowski frames. That means
$L(\B) = \{L(e_{0}),L(e_{1}),L(e_{2}),L(e_{3})\}$  is a Minkowski frame with  $\lpr{L(e_{0})}{L(e_{0})} = -1,$ for each given 
Minkowski frame $\B$.   
\end{dfn}

\vspace{0.1cm}
\subsection{An Axiomatic Construction of the Schwarzschild Spacetime}

We will define the set of Lorentz transformations and later we will extend those transformations to a more general associated group 
which preserves  the $4$-volume form of $\mink$. We will call that group of {\it extended Lorentz group}. According the Minkowski 
paper in \cite{LEM}, it can be considered as an axiomatization of the Special Relativity Theory of Einstein.

\vspace{0.3cm}
Let $(x^{0},x^{1},x^{2},x^{3}) \in  K$ and $(\overline{x}^{0},\overline{x}^{1},x^{2},x^{3}) \in \overline{K}$ with relative velocity 
$\vec{v} = (0,v,0,0) \in \euclidean$ of $\overline{K}$ relative to $K$ and $-\vec{v} = (0,-v,0,0) \in \euclidean$ of $K$ 
relative to $\overline{K}$. We are assuming $0 \leq v < 1$,  where the light speed is assumed to be equal to $1$. 

\vspace{0.1cm}
The Lorentz transformation $L$ and its inverse $L^{-1}$, in an infinitesimal form, restrict to the part correspondent 
to the two first variable, can be written by

$$\left\{\begin{matrix} 
d\overline{x}^{0} = \frac{1}{\sqrt{1 - v^{2}}} dx^{0} - \frac{v}{\sqrt{1 - v^{2}}} dx^{1} \\
d\overline{x}^{1} = \frac{-v}{\sqrt{1 - v^{2}}} dx^{0} + \frac{1}{\sqrt{1 - v^{2}}} dx^{1}
\end{matrix} \right. 
\; \; \; \mbox{ and } \; \; \; 
\left\{\begin{matrix} 
dx^{0} = \frac{1}{\sqrt{1 - v^{2}}} d\overline{x}^{0} + \frac{v}{\sqrt{1 - v^{2}}} d\overline{x}^{1} \\
dx^{1} = \frac{v}{\sqrt{1 - v^{2}}} d\overline{x}^{0} + \frac{1}{\sqrt{1 - v^{2}}} d\overline{x}^{1}
\end{matrix} \right.$$

\vspace{0.1cm}
\begin{lemma}
The Lorentz transformation $L$ and its inverse $L^{-1}$ given above preserve the volume form: 
$dx^{0} \wedge dx^{1} = d\overline{x}^{0} \wedge d\overline{x}^{1}$. 

Defining the proper time relatives to each referential, taking  
$d\overline{x}^{1} = 0$ in the transformation $L$  (and $dx^{1} = 0$ in $L^{-1}$) 
to measure relativistic time called dilatation, we obtain, respectively: 
$$d\overline{x}^{0} = \sqrt{1 - v^{2}} dx^{0} \; \; \; \mbox{ and } \; \; \; dx^{0} = \sqrt{1 - v^{2}} d\overline{x}^{0}.$$
\end{lemma}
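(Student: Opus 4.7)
The proof is a direct algebraic verification in two short pieces, so my plan is just to lay out the order of substitutions and point out where the coefficient $\sqrt{1-v^2}$ is going to come out clean.

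For the volume-form assertion, I would expand $d\overline{x}^0\wedge d\overline{x}^1$ by substituting the two displayed formulas for the components of $L$, then use antisymmetry of the wedge product (so $dx^0\wedge dx^0 = dx^1\wedge dx^1 = 0$ and $dx^1\wedge dx^0 = -\,dx^0\wedge dx^1$). Only two cross terms survive: the product of the two $\frac{1}{\sqrt{1-v^2}}$ coefficients, giving $\frac{1}{1-v^2}\,dx^0\wedge dx^1$, and the product of the two $\frac{-v}{\sqrt{1-v^2}}$ and $\frac{v}{\sqrt{1-v^2}}$ coefficients, which contributes $-\frac{v^2}{1-v^2}\,dx^0\wedge dx^1$ after re-ordering the factors. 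Their sum is $\frac{1-v^2}{1-v^2}\,dx^0\wedge dx^1 = dx^0\wedge dx^1$, which is what is wanted. The computation for $L^{-1}$ is identical up to the sign of $v$, and since only $v^2$ appears in the coefficient it gives the same identity.

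For the time-dilatation formulas I would impose the constraint in the second row first. Setting $d\overline{x}^1 = 0$ in the second equation of $L$ yields $\frac{v}{\sqrt{1-v^2}}\,dx^0 = \frac{1}{\sqrt{1-v^2}}\,dx^1$, i.e.\ $dx^1 = v\,dx^0$. Plugging this into the first equation of $L$ collapses the right-hand side to $\frac{1-v^2}{\sqrt{1-v^2}}\,dx^0 = \sqrt{1-v^2}\,dx^0$, which is the first claimed relation. Running the symmetric argument on $L^{-1}$ with $dx^1 = 0$ gives $d\overline{x}^1 = -v\,d\overline{x}^0$ from its second row, and then the first row of $L^{-1}$ reduces to $dx^0 = \sqrt{1-v^2}\,d\overline{x}^0$.

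There is no real obstacle here; the only thing to watch is the sign from $dx^1\wedge dx^0 = -dx^0\wedge dx^1$ when collecting the wedge-product cross terms, and the choice of which row of the system to use to eliminate the constrained differential before substituting into the other row. Both checks are one-line simplifications once the factor $1-v^2$ is pulled out of the common denominator $\sqrt{1-v^2}$.
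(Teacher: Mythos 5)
Your proof is correct and takes essentially the same route as the paper: the dilation formulas are obtained by eliminating the constrained differential from one row of the system and substituting into the other, and the volume-form identity is exactly the statement $\det L=\det L^{-1}=1$, which you verify by expanding the wedge product explicitly where the paper simply cites the determinant. One cosmetic remark: both surviving cross-term coefficients are $-v/\sqrt{1-v^{2}}$ (not $-v$ and $+v$ as written), but the net contribution you state, $-\tfrac{v^{2}}{1-v^{2}}\,dx^{0}\wedge dx^{1}$, is the correct one, so the conclusion stands.
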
  

\begin{proof} 
Writing the matrices corresponding to $L$ and $L^{-1}$, one observes that assuming $dx^{1} = 0$ in the 
$L^{-1}$-matrix, it follows that $v = - \displaystyle \frac{d\overline{x}^{1}}{d\overline{x}^{0}}$, and so one has 
$$\frac{d{x}^{0}}{d\overline{x}^{0}} = \frac{1}{\sqrt{1 - v^{2}}} + 
\frac{v}{\sqrt{1 - v^{2}}} \frac{d\overline{x}^{1}}{d\overline{x}^{0}} = \frac{1 + v(-v)}{\sqrt{1 - v^{2}}} = \sqrt{1 - v^{2}}.$$

Proceeding in a similar way to the other referential, we obtain $v = \displaystyle \frac{d{x}^{1}}{d{x}^{0}}$ and 
$$\frac{d\overline{x}^{0}}{d{x}^{0}} = \frac{1}{\sqrt{1 - v^{2}}} - \frac{v}{\sqrt{1 - v^{2}}} \frac{d{x}^{1}}{d{x}^{0}} = 
\frac{1 - v(v)}{\sqrt{1 - v^{2}}} = \sqrt{1 - v^{2}}.$$ 

The preservation of the volume form follows from $\det(L) = 1 = \det(L^{-1})$. 
\end{proof}

\subsection{In the Lightcone $\C \subset \mink$} The future directed lightcone $\C \subset \mink$ is the subset 
$$\C = \{p \in \mink: \lpr{p}{p} = 0 \; \mbox{ and } \; \; p^{0} = -\lpr{p}{\partial_{0}} > 0\}.$$ 

\vspace{0.2cm}
\begin{prop}
For each $p \in \C$ there exists an unique vector in the unit sphere, namely, $\vec{n} \in S^{2} \subset \euclidean$ given 
by $\vec{n} = (0,n_{1},n_{2},n_{3})$ where $n_{i} = \frac{1}{p^{0}} \lpr{p}{\partial_{i}}$ for $i =1,2,3.$ 
Therefore, one can obtain a parametrization of the lightcone using spherical coordinates, namely, 
$\Gamma(r,\varphi,\theta) = r L(\varphi,\theta)$ where 
$$L(\varphi,\theta) = (1, \cos \theta \sin \varphi, \sin \theta \sin \varphi, \cos \varphi).$$ 

Moreover, the induced metric over $\C$ is degenerated with $ds^{2}(\C) = r^{2}(d\varphi^{2} + \sin^{2} \varphi d\theta^{2}),$ 
and with induced tensor such that $g_{11} = 0$, $g_{22} = r^{2}$, $g_{33} = r^{2} \sin^{2} \varphi$ and $g_{ij} = 0$ if 
$i \neq j$.    
\end{prop}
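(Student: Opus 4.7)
The plan is to address the three assertions of the proposition in sequence. For the existence and uniqueness of $\vec{n}$, the null condition $\lpr{p}{p} = 0$ combined with $p^{0} > 0$ gives $(p^{0})^{2} = (p^{1})^{2} + (p^{2})^{2} + (p^{3})^{2} > 0$. Since $\lpr{p}{\partial_{i}} = p^{i}$ for the spatial indices $i = 1,2,3$, the prescription $n_{i} = p^{i}/p^{0}$ defines a vector $\vec{n} \in \euclidean = [\partial_{1},\partial_{2},\partial_{3}]$ whose Euclidean norm equals $1$ precisely by the null relation above. Uniqueness is immediate: the spatial direction is dictated by $p$ and the length is pinned by the requirement $\vec{n} \in S^{2}$.

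Second, to obtain the parametrization, I would use standard spherical coordinates on $S^{2} \subset \euclidean$ to write $\vec{n} = (0,\cos\theta\sin\varphi,\sin\theta\sin\varphi,\cos\varphi)$ and set $r = p^{0} > 0$. The identity $p = p^{0}(1,n_{1},n_{2},n_{3}) = r L(\varphi,\theta)$ is then tautological, producing the desired parametrization $\Gamma(r,\varphi,\theta) = r L(\varphi,\theta)$ of $\C$, away from its vertex and the usual coordinate defects at the poles.

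For the induced metric I would compute the Lorentzian products of the coordinate vector fields $\Gamma_{r} = L$, $\Gamma_{\varphi} = r L_{\varphi}$ and $\Gamma_{\theta} = r L_{\theta}$. A direct calculation $\lpr{L}{L} = -1 + \sin^{2}\varphi(\cos^{2}\theta + \sin^{2}\theta) + \cos^{2}\varphi = 0$ yields $g_{11} = 0$. Differentiating the identity $\lpr{L}{L} = 0$ in each angular variable gives $\lpr{L}{L_{\varphi}} = \lpr{L}{L_{\theta}} = 0$, and hence $g_{12} = g_{13} = 0$. Because $L_{\varphi}$ and $L_{\theta}$ have vanishing timelike component, their Lorentzian products reduce to Euclidean ones, and the angular block is literally the round metric of $S^{2}$ scaled by $r$, giving $g_{22} = r^{2}$, $g_{33} = r^{2}\sin^{2}\varphi$, $g_{23} = 0$. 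The vanishing of $g_{11}$ alongside the positive definiteness of the angular block is the precise meaning of the induced metric being degenerate, with one-dimensional kernel spanned by the radial null direction $\Gamma_{r} = L$.

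There is essentially no obstacle here: the whole argument reduces to careful bookkeeping of Lorentzian signs in the timelike component and to the observation that the nullness of $L$ is what simultaneously normalizes $\vec{n}$ to a unit vector and produces the degeneracy $g_{11} = 0$ along the light ray.
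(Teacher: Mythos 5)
Your proof is correct and complete. The paper states this proposition with no proof at all, treating it as a routine verification; your computation --- deducing $n_1^2+n_2^2+n_3^2=1$ from the null condition $(p^0)^2=(p^1)^2+(p^2)^2+(p^3)^2$, writing $p = rL(\varphi,\theta)$ with $r=p^0$ in spherical coordinates, and evaluating the Lorentzian products of $\Gamma_r=L$, $\Gamma_\varphi=rL_\varphi$, $\Gamma_\theta=rL_\theta$ (with $\lpr{L}{L}=0$ forcing the degeneracy $g_{11}=0$) --- is precisely the standard argument the statement presupposes, so there is nothing in the paper to compare it against and no gap to report.
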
 

In the reference \cite{DFS}  the authors show that every totally umbilical spacelike surface in $\mink$ is a sub-surface 
of the lightcone contained into an affine hyperplane. For our purposes we will carry out that construction in the next subsection.

\subsection{A construction in $\mink$} A spacelike affine hyperplane can be defined using its normal vector in $\mink$  
as follows 
\begin{equation}
H(a,\tau) = \{(t, x, y, z) \in \mink : \lpr{(t - a, x, y, z)}{\tau} = 0\},
\end{equation} 
where $\tau$ is an unit future directed timelike vector of $\mink$. If $a > 0$ then the point $P = (a,0,0,0) \in H(a,\tau)$. 

\vspace{0.1cm}
Next, we will show that the section $S^{2}(\rho) = H(a, \tau) \cap \C$ is an Euclidean sphere. In fact, when 
$\tau = \partial_{0}$ the result is obvious, then we will assume that the pair $\{\partial_{0},\tau\}$ is a linearly independent 
subset of $\mink$. 

\begin{prop}
If $\{\partial_{0},\tau\}$ is a linearly independent subset of $\mink$, then, there exists a Minkowski referential 
$\{\partial_{0}, e_{1}, e_{2}, e_{3}\}$ such that $\lpr{\tau}{e_{1}} = 0 = \lpr{\tau}{e_{2}}$, with 
$e_{3}$ belonging to  $ \mbox{Span}\{\partial_{0},\tau\} \cap \euclidean$ and such that
\begin{equation}
\tau = \cosh \phi \; \partial_{0} + \sinh \phi \; e_{3} \; \; \mbox{ where } \; \; \cosh \phi = -\lpr{\partial_{0}}{\tau}, 
\; \phi > 0.
\end{equation}
\end{prop}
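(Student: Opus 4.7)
The plan is to realize $e_{3}$ as the normalized spatial part of $\tau$ relative to the orthogonal splitting $\mink = \real\,\partial_{0} \oplus \euclidean$, and then complete $e_{3}$ to a Euclidean orthonormal basis of $\euclidean$ in order to obtain $e_{1}$ and $e_{2}$.

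First I would write
$$\tau = \alpha\,\partial_{0} + w, \qquad \alpha = -\lpr{\tau}{\partial_{0}}, \quad w \in \euclidean,$$
which is simply the canonical decomposition according to the signature. Since $\tau$ is a unit future directed timelike vector, $\alpha > 0$ and the identity $\lpr{\tau}{\tau} = -1$ yields $\alpha^{2} = 1 + \lpr{w}{w}$. Linear independence of $\{\partial_{0},\tau\}$ forces $w \neq 0$, hence $\alpha > 1$. I can therefore define $\phi > 0$ by $\cosh \phi = \alpha$, and then $\sinh \phi = \sqrt{\alpha^{2}-1}$ equals the Euclidean norm $|w|$. Setting $e_{3} = w/|w|$ gives
$$\tau = \cosh \phi\,\partial_{0} + \sinh \phi\,e_{3},$$
with $e_{3} \in \mbox{Span}\{\partial_{0},\tau\} \cap \euclidean$ by construction.

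Next I would pick any pair $e_{1}, e_{2}$ completing $\{e_{3}\}$ to an orthonormal basis of the Euclidean 3-space $\euclidean$. The required Minkowski orthogonality relations of the frame $\{\partial_{0},e_{1},e_{2},e_{3}\}$ then reduce to the Euclidean orthonormality inside $\euclidean$ together with the automatic identities $\lpr{\partial_{0}}{e_{i}} = 0$ for $i=1,2,3$. The demanded conditions $\lpr{\tau}{e_{1}} = \lpr{\tau}{e_{2}} = 0$ follow at once, since $\tau$ is a linear combination of $\partial_{0}$ and $e_{3}$, both Minkowski-orthogonal to $e_{1}$ and $e_{2}$.

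The one delicate point is ensuring that $\phi$ may be chosen strictly positive, which requires $\alpha > 1$; this is exactly where the hypothesis that $\partial_{0}$ and $\tau$ are linearly independent intervenes, through $w \neq 0$. Apart from this verification, the argument is a signature-adapted Gram–Schmidt construction and presents no genuine obstacle.
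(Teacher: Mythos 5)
Your proof is correct, and it is more constructive and more complete than the one in the paper. The paper's entire argument is the Grassmann dimension identity
$$\dim(\mbox{Span}\{\partial_{0},\tau\}) + \dim(\euclidean) - \dim(\mbox{Span}\{\partial_{0},\tau\} \cap \euclidean) = 4,$$
which shows the intersection $\mbox{Span}\{\partial_{0},\tau\} \cap \euclidean$ is one-dimensional and hence contains a unit vector $e_{3}$; the hyperbolic decomposition, the completion to a frame by $e_{1},e_{2}$, and the positivity of $\phi$ are all left implicit. You instead produce $e_{3}$ explicitly as the normalized spatial part $w/|w|$ of $\tau$ in the splitting $\mink = \real\,\partial_{0}\oplus\euclidean$, which has the advantage of delivering the identities $\cosh\phi = -\lpr{\partial_{0}}{\tau}$ and $\sinh\phi = |w|$ for free, and of making transparent exactly where linear independence is used (to guarantee $w \neq 0$, hence $\phi > 0$). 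The two routes select the same vector $e_{3}$ (up to the sign fixed by $\phi>0$), so nothing is lost; what the paper's dimension count buys is brevity, while your version actually verifies every clause of the statement, including the orthogonality of $\tau$ to $e_{1}$ and $e_{2}$, which the paper never checks.
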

\begin{proof} The existence of $e_{3}$ follows from 
$$\dim(\mbox{Span}\{\partial_{0},\tau\}) + \dim(\euclidean) - \dim(\mbox{Span}\{\partial_{0},\tau\} \cap \euclidean) = 4.$$
\end{proof}

\begin{corol}
The hyperplane orthogonal to the observer $\tau$, namely $H(\tau)$, has an orthonormal base given by 
$\{e_{1},e_{2},\vec{w}\}$ where 
$$\vec{w} = \sinh \phi \; \partial_{0} + \cosh \phi \; e_{3}.$$
\end{corol}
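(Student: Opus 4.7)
The plan is to verify directly that the three vectors $e_{1}, e_{2}, \vec{w}$ form an orthonormal basis of the spacelike hyperplane $H(\tau)$, using only the data already supplied by the preceding proposition and the hyperbolic identity $\cosh^{2} \phi - \sinh^{2} \phi = 1$.

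First I would check that $\vec{w}$ lies in $H(\tau)$, i.e.\ that $\lpr{\vec{w}}{\tau} = 0$. Writing out
$$\lpr{\vec{w}}{\tau} = \lpr{\sinh \phi \; \partial_{0} + \cosh \phi \; e_{3}}{\cosh \phi \; \partial_{0} + \sinh \phi \; e_{3}}$$
and using $\lpr{\partial_{0}}{\partial_{0}} = -1$, $\lpr{e_{3}}{e_{3}} = 1$, $\lpr{\partial_{0}}{e_{3}} = 0$, the expression collapses at once to $-\sinh \phi \cosh \phi + \cosh \phi \sinh \phi = 0$. Next, I would compute the norm: $\lpr{\vec{w}}{\vec{w}} = -\sinh^{2} \phi + \cosh^{2} \phi = 1$, so $\vec{w}$ is a spacelike unit vector. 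This is really the heart of the corollary: the specific choice of coefficients $(\sinh \phi, \cosh \phi)$ for $\vec{w}$ (as opposed to $(\cosh \phi, \sinh \phi)$ for $\tau$) is forced by the signature, and it is the point at which one must be careful with signs.

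Second, I would address mutual orthogonality. Since $\{\partial_{0}, e_{1}, e_{2}, e_{3}\}$ is a Minkowski frame, the vectors $e_{1}, e_{2}$ are spacelike unit, mutually orthogonal, and both orthogonal to $\partial_{0}$ and $e_{3}$. Therefore $\lpr{e_{i}}{\vec{w}} = \sinh \phi \, \lpr{e_{i}}{\partial_{0}} + \cosh \phi \, \lpr{e_{i}}{e_{3}} = 0$ for $i = 1, 2$ by linearity, while the relations $\lpr{e_{1}}{\tau} = \lpr{e_{2}}{\tau} = 0$ are part of the hypothesis. Hence $e_{1}, e_{2}, \vec{w}$ all lie in $H(\tau)$ and are pairwise orthogonal unit spacelike vectors.

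Finally a dimension-count closes the argument: the hyperplane $H(\tau)$, being the orthogonal complement of the timelike vector $\tau$, is a $3$-dimensional spacelike subspace of $\mink$, and $\{e_{1}, e_{2}, \vec{w}\}$ is a linearly independent orthonormal subset of $H(\tau)$ with exactly three elements, hence a basis. There is no real obstacle here — the computation is purely algebraic and uses only $\cosh^{2} \phi - \sinh^{2} \phi = 1$ together with orthogonality relations already available — so the proof amounts to a concise verification.
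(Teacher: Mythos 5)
Your proposal is correct and follows essentially the same route as the paper: a direct verification that $\lpr{\vec{w}}{\tau}=0$ and $\lpr{\vec{w}}{\vec{w}}=1$ via the identity $\cosh^{2}\phi-\sinh^{2}\phi=1$, with the orthogonality to $e_{1},e_{2}$ coming from the frame relations of the preceding proposition. You merely spell out the orthogonality with $e_{1},e_{2}$ and the dimension count more explicitly than the paper does, which leaves those steps implicit.
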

\begin{proof}
Since $e_{3} \in \euclidean$ is an unit vector the set $\{\tau, \vec{w}\}$ forms an orthonormal base of 
$\mbox{Span}\{\partial_{0},\tau\}$ provided that 
$\lpr{ \tau}{\vec w} = \lpr{\cosh \phi \; \partial_{0} + \sinh \phi \; e_{3}}{\sinh \phi \; \partial_{0} + \cosh \phi \; e_{3}} = 0$, 
$\lpr{\tau}{ \tau} = -1$ and $\lpr{\vec w}{\vec w} = 1$. 
\end{proof}

\vspace{0.1cm}
\begin{lemma}
Let $P = a \partial_{0}$ be a point in future directed time-axis. If $X(\lambda) = P + \lambda \vec{w}$ is the line 
passing through $P$ with direction $\vec{w}$ then, it cuts the lightcone in two points $A_{1}$ and $A_{2}$, such that 
the middle point of the line segment is $M = a \cosh \phi \; \tau$.      
\end{lemma}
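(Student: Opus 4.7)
The plan is to parametrize the line $X(\lambda)$ in the Lorentz coordinates of $\{\partial_0,e_3\}$ and impose the lightcone equation $\langle X(\lambda),X(\lambda)\rangle=0$. Since $P=a\partial_0$ and $\vec w=\sinh\phi\,\partial_0+\cosh\phi\,e_3$, I would write
\[
X(\lambda)=(a+\lambda\sinh\phi)\,\partial_0+\lambda\cosh\phi\,e_3,
\]
and compute
\[
\langle X(\lambda),X(\lambda)\rangle=-(a+\lambda\sinh\phi)^2+\lambda^2\cosh^2\phi.
\]
Using $\cosh^2\phi-\sinh^2\phi=1$, this reduces the lightcone condition to the quadratic
\[
\lambda^2-2a\sinh\phi\,\lambda-a^2=0.
\]

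Next, I would solve the quadratic by the standard formula, obtaining
\[
\lambda_{1,2}=a\sinh\phi\pm a\cosh\phi,
\]
so the two intersection points $A_1=X(\lambda_1)$ and $A_2=X(\lambda_2)$ exist and are real (the discriminant $a^2\sinh^2\phi+a^2=a^2\cosh^2\phi$ is positive since $a>0$). This yields the two explicit cuts of the line with $\C$.

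Finally, the midpoint corresponds to $\lambda_M=\tfrac12(\lambda_1+\lambda_2)=a\sinh\phi$. Substituting back,
\[
M=P+a\sinh\phi\,\vec w=a(1+\sinh^2\phi)\,\partial_0+a\sinh\phi\cosh\phi\,e_3
     =a\cosh\phi\bigl(\cosh\phi\,\partial_0+\sinh\phi\,e_3\bigr)=a\cosh\phi\,\tau,
\]
using $1+\sinh^2\phi=\cosh^2\phi$ and the formula for $\tau$ from the previous proposition.

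There is no real obstacle; the proof is a one-shot quadratic calculation, the only care being the bookkeeping between the pairs $(\tau,\vec w)$ and $(\partial_0,e_3)$ and making sure the midpoint simplifies cleanly in the $\tau$ direction. The verification amounts to the hyperbolic identity $\cosh^2\phi-\sinh^2\phi=1$ appearing twice: once to reduce the lightcone condition to a quadratic, and once to recognize the midpoint as a multiple of $\tau$.
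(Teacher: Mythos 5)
Your proof is correct and follows essentially the same route as the paper: both reduce the lightcone condition to the quadratic $\lambda^{2}-2a\sinh\phi\,\lambda-a^{2}=0$, take the midpoint parameter $\lambda=a\sinh\phi$ from the sum of roots, and simplify via $1+\sinh^{2}\phi=\cosh^{2}\phi$ to obtain $M=a\cosh\phi\,\tau$. The only cosmetic difference is that the paper writes the two roots in exponential form, $\lambda_{1}=-ae^{-\phi}$ and $\lambda_{2}=ae^{\phi}$, rather than as $a\sinh\phi\pm a\cosh\phi$.
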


\begin{proof}
Since $X(\lambda) = a \partial_{0} + \lambda (\sinh \phi \; \partial_{0} + \cosh \phi \; e_{3})$, from 
$$\lpr{X(\lambda)}{X(\lambda} = -a^{2} - 2 a \lambda \sinh \phi - \lambda^{2} \sinh^{2} \phi + \lambda^{2} \cosh^{2} \phi = 0$$
it follows $(\lambda - a \sinh \phi)^{2} = a^{2} \cosh^{2} \phi$. So for this quadratic equation we have two solutions, 
namely, $\lambda_{1} = -a e^{-\phi}$ and $\lambda_{2} = a e^{\phi}$, and hence, it follows that 
$\lambda_{1} + \lambda_{2} = 2 a \sinh \phi$. Therefore, from the facts $M = X(a \sinh \phi)$ and 
$$X(a \sinh \phi) = a \cosh^{2} \phi \; \partial_{0} + a \sinh \phi \cosh \phi \; e_{3} = 
a \cosh \phi(\cosh \phi \; \partial_{0} + \sinh \phi \; e_{3})$$ 
it follows that $M = a \cosh \phi \; \tau$, $A_1 = X(\lambda_1)$ and $A_2 = X(\lambda_2)$.
\end{proof}

\vspace{0.1cm}
\begin{corol}
The sphere $S^{2}(M,\rho)$ of center $M$ and radius $\rho = a \cosh \phi$, of the hyperplane $H(a,\tau)$ passing through  
$P$ and orthogonal to $\tau$, has parametric expression 
$$X(\varphi,\theta) = M + a\cosh \phi \left[\cos \theta \sin \varphi \; e_{1} + 
\sin \theta \sin \varphi \; e_{2} + \cos \varphi \; \vec{w}\right].$$ 

Moreover, since $\lpr{X(\varphi,\theta)}{X(\varphi,\theta)} = 0$ it follows that $X(\varphi,\theta) \subset \C$.
\end{corol}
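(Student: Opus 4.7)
The plan is to verify the parametric formula directly from the two facts already established: first, that $\{e_{1},e_{2},\vec{w}\}$ is an orthonormal basis of the direction-space of $H(a,\tau)$ (by the previous corollary), and second, that $M = a\cosh\phi\,\tau$ is actually a point of $H(a,\tau)$. The latter is a one-line check: $\lpr{M - a\partial_{0}}{\tau} = a\cosh\phi\,\lpr{\tau}{\tau} - a\,\lpr{\partial_{0}}{\tau} = -a\cosh\phi + a\cosh\phi = 0$, so $M \in H(a,\tau)$, which makes \lq\lq sphere in $H(a,\tau)$ centered at $M$\rq\rq\ well-defined.

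Next, I would observe that since $\tau$ is timelike and $H(a,\tau)$ is its orthogonal affine complement, the restriction of $\lpr{\cdot}{\cdot}$ to the translation space $\mbox{Span}\{e_{1},e_{2},\vec{w}\}$ is positive definite with $\{e_{1},e_{2},\vec{w}\}$ as an orthonormal basis. Consequently the points of $H(a,\tau)$ at induced distance $\rho = a\cosh\phi$ from $M$ are exactly those of the form $M + \rho(u^{1} e_{1} + u^{2} e_{2} + u^{3}\vec{w})$ with $(u^{1})^{2}+(u^{2})^{2}+(u^{3})^{2}=1$. Writing $u^{1}=\cos\theta\sin\varphi$, $u^{2}=\sin\theta\sin\varphi$, $u^{3}=\cos\varphi$ yields precisely the formula stated.

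For the final claim that $X(\varphi,\theta)\in\C$, I would compute $\lpr{X}{X}$ by expanding. The crucial point is that the cross terms vanish: since $M$ is a scalar multiple of $\tau$, and $\tau$ is orthogonal to each of $e_{1},e_{2},\vec{w}$, one obtains $\lpr{M}{e_{i}} = 0$ and $\lpr{M}{\vec{w}} = 0$. Thus
\[
\lpr{X}{X} = \lpr{M}{M} + \rho^{2}\bigl[\cos^{2}\theta\sin^{2}\varphi + \sin^{2}\theta\sin^{2}\varphi + \cos^{2}\varphi\bigr] = -\rho^{2} + \rho^{2} = 0,
\]
using $\lpr{\tau}{\tau}=-1$ and the Pythagorean identity. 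To conclude $X(\varphi,\theta)\in\C$ (not in the past cone) I would check positivity of the time coordinate: $X^{0} = a\cosh^{2}\phi + a\cosh\phi\sinh\phi\cos\varphi = a\cosh\phi(\cosh\phi + \sinh\phi\cos\varphi)$, which is strictly positive because $\cosh\phi > |\sinh\phi|\geq|\sinh\phi\cos\varphi|$.

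There is no real obstacle here; the argument is essentially bookkeeping that exploits the orthogonal decomposition $\mink = \real\,\tau \oplus H(\tau)$ together with the numerical coincidence that the timelike \lq\lq radius\rq\rq\ $|\lpr{M}{M}|^{1/2}$ equals the spacelike radius $\rho$. The only subtle point worth flagging is to justify the sign in $\lpr{M}{M}=-\rho^{2}$, i.e.\ the precise choice $\rho = a\cosh\phi$, which is exactly what the preceding lemma provides through the identification of $M$ as the midpoint of the chord $A_{1}A_{2}$.
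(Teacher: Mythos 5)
Your proposal is correct and complete. The paper in fact supplies no proof for this corollary (it is stated as an immediate consequence of the preceding lemma, which gives $M = a\cosh\phi\,\tau$, and of the corollary identifying $\{e_{1},e_{2},\vec{w}\}$ as an orthonormal basis of $H(\tau)$); your computation --- vanishing of the cross terms because $M$ is proportional to $\tau$, the cancellation $\lpr{M}{M} + \rho^{2} = -a^{2}\cosh^{2}\phi + a^{2}\cosh^{2}\phi = 0$, and the check that $X^{0} = a\cosh\phi(\cosh\phi + \sinh\phi\cos\varphi) > 0$ so the sphere lies in the \emph{future} cone --- is exactly the bookkeeping the authors leave implicit, and the last point is a detail they do not even mention.
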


Next, let us denote by $\hat{P}$ the parallel  projection to $\partial_{0}$ (or orthogonal to $\euclidean$) of a point $P$ into 
$\euclidean$, that is $\hat{P} = P + \lpr{\partial_{0}}{P} \partial_{0}$. Then we have the following lemmas. 

\vspace{0.1cm}
\begin{lemma}
The $\partial_{0}$-parallel projection of the sphere $S^{2}(M,\rho)$ to the correspondent rest-space $\euclidean(\partial_{0})$ 
is the ellipsoid of equation
\begin{equation}
\hat{X}(\varphi,\theta) = \hat{M} + a\cosh \phi \left [\cos \theta \sin \varphi \; e_{1} + 
\sin \theta \sin \varphi \; e_{2} + \cosh \phi\; \cos \varphi \; e_{3}\right].
\end{equation}
\end{lemma}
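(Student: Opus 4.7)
The plan is to compute the projection $\hat X(\varphi,\theta)$ by applying the linear operator $P\mapsto P+\langle\partial_0,P\rangle\partial_0$ directly to the parametrization from the previous corollary, handling the four summands $M$, $e_1$, $e_2$, $\vec w$ one at a time. The key input is that in the Minkowski referential $\{\partial_0,e_1,e_2,e_3\}$ constructed in the proposition above, we have $e_1,e_2,e_3\in\euclidean(\partial_0)$, and the only vector in the parametrization that carries a $\partial_0$-component is $\vec w$.

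First I would note that since $\langle\partial_0,e_i\rangle=0$ for $i=1,2$, these vectors project to themselves, so the first two terms of the bracket are unchanged. Then I would compute the projection of $\vec w=\sinh\phi\,\partial_0+\cosh\phi\,e_3$: since $\langle\partial_0,\vec w\rangle=-\sinh\phi$, one gets
$$\hat{\vec w}=\vec w+\langle\partial_0,\vec w\rangle\,\partial_0=\sinh\phi\,\partial_0+\cosh\phi\,e_3-\sinh\phi\,\partial_0=\cosh\phi\,e_3.$$
Inserting the extra factor $a\cosh\phi\cos\varphi$ that multiplies $\vec w$ in the expression for $X(\varphi,\theta)$ yields precisely the term $a\cosh\phi\cdot\cosh\phi\cos\varphi\,e_3$ that appears in the claimed formula. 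Finally, since the projection is linear, the summand $M$ contributes $\hat M$, and assembling the four pieces gives the stated formula for $\hat X(\varphi,\theta)$.

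To see that $\hat X(\varphi,\theta)$ does parametrize an ellipsoid (justifying the name), I would observe that in the orthonormal basis $\{e_1,e_2,e_3\}$ of $\euclidean(\partial_0)$ the displacement $\hat X-\hat M$ has coordinates $(a\cosh\phi\cos\theta\sin\varphi,\,a\cosh\phi\sin\theta\sin\varphi,\,a\cosh^{2}\phi\cos\varphi)$, which are exactly the three semi-axes $a\cosh\phi,a\cosh\phi,a\cosh^{2}\phi$ multiplying the standard spherical parametrization. Since every step is a direct substitution using the formulas already established in the preceding corollary and proposition, there is no substantive obstacle; the only point requiring care is to use the correct sign convention $\hat P=P+\langle\partial_0,P\rangle\partial_0$ when computing $\hat{\vec w}$, so that the $\partial_0$ component cancels instead of doubling.
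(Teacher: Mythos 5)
Your computation is correct: the paper actually states this lemma without giving any proof, and your argument — applying the linear projection $\hat P = P + \lpr{\partial_0}{P}\partial_0$ term by term to the parametrization $X(\varphi,\theta)$ from the preceding corollary, noting that $e_1,e_2\in\euclidean(\partial_0)$ are fixed while $\hat{\vec w}=\cosh\phi\, e_3$ — is exactly the direct verification the authors leave to the reader. The sign bookkeeping ($\lpr{\partial_0}{\vec w}=-\sinh\phi$, so the time component cancels) and the identification of the semi-axes $a\cosh\phi,\ a\cosh\phi,\ a\cosh^2\phi$ are both handled correctly.
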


\vspace{0.1cm}
\begin{lemma}
The set of planes passing through of the points $A_{1}$ and $A_{2}$ that are contained in the hyperplane $H(a, \tau)$, 
intercepts the lightcone in a set of circles with center $M$, which projects onto a set of ellipses, 
all having as one of the focus the origin $(0,0,0,0)$. 

Moreover, the aphelion $A_1$ and the perihelion $A_2$ are given by 
$A_{1} =  \displaystyle \frac{a(1+ e^{-2\phi})}{2}( \partial_0 - e_3)$ and $A_{2} = \displaystyle 
\frac{a(1+ e^{2\phi})}{2}(\partial_0 + e_3)$.  
\end{lemma}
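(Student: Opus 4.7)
The plan is to reduce the statement to three concrete computations: first, identifying the intersection $\C\cap H(a,\tau)$ with a concrete $2$-sphere; second, showing that $A_1$ and $A_2$ are antipodal on that sphere so that every relevant plane cuts the sphere in a great circle centered at $M$; third, projecting those great circles onto $\euclidean(\partial_0)$ and identifying the origin as a focus of the resulting ellipse.

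First I would invoke the previous corollary, which says that $S^2(M,\rho)$ lies on the lightcone and, being the intersection of $\C$ with the $3$-dimensional hyperplane $H(a,\tau)$, exhausts $\C\cap H(a,\tau)$. A direct computation with the formulas $\lambda_1=-ae^{-\phi}$, $\lambda_2=ae^{\phi}$ shows $A_i-M=\pm a\cosh\phi\,\vec w$, so $A_1,A_2$ are antipodal on $S^2(M,\rho)$, with the connecting diameter carried by $\vec w$. Since any plane $\Pi\subset H(a,\tau)$ through $A_1,A_2$ must contain this diameter and passes through the center $M$, it cuts $S^2(M,\rho)$ in a great circle of radius $\rho=a\cosh\phi$ centered at $M$. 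This settles the assertion that all such sections are circles centered at $M$.

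Next I would parametrize such a great circle: choosing a unit vector $\vec u=\cos\alpha\,e_1+\sin\alpha\,e_2$ orthogonal to $\vec w$ in $H(a,\tau)$, the circle is
\[
X(\varphi)=M+a\cosh\phi\,(\cos\varphi\,\vec w+\sin\varphi\,\vec u).
\]
Using $\vec w=\sinh\phi\,\partial_0+\cosh\phi\,e_3$, the $\partial_0$-parallel projection sends $\vec w\mapsto\cosh\phi\,e_3$ and fixes $\vec u$, whence
\[
\hat X(\varphi)=\hat M+a\cosh^2\!\phi\,\cos\varphi\,e_3+a\cosh\phi\,\sin\varphi\,\vec u.
\]
This is an ellipse in the plane $\mathrm{Span}\{e_3,\vec u\}$ with semi-major axis $a\cosh^2\!\phi$ along $e_3$ and semi-minor axis $a\cosh\phi$ along $\vec u$; its linear eccentricity is $c_{\mathrm{ell}}=a\cosh\phi\,\sinh\phi$. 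Since $\hat M=a\cosh\phi\,\sinh\phi\,e_3$, the origin lies on the major axis at distance $c_{\mathrm{ell}}$ from the center, i.e.\ at one focus, as claimed.

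Finally, to identify $A_1$ and $A_2$ I would simply substitute $\lambda_1,\lambda_2$ into $X(\lambda)=a\partial_0+\lambda\vec w$ and use the identities $e^{-\phi}\sinh\phi=(1-e^{-2\phi})/2$, $e^{-\phi}\cosh\phi=(1+e^{-2\phi})/2$, and their $\phi\to-\phi$ analogues, to arrive at
\[
A_1=\tfrac{a(1+e^{-2\phi})}{2}(\partial_0-e_3),\qquad A_2=\tfrac{a(1+e^{2\phi})}{2}(\partial_0+e_3).
\]
No step is technically deep; the only real care is in carrying out the projection cleanly, since it is the only place where the fact that the focus (rather than some other distinguished point of the ellipse) lands at the origin is actually being used. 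Everything else is a direct consequence of the preceding lemma and its corollary.
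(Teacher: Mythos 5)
Your proposal is correct, and every computation in it checks out: $A_i-M=\pm a\cosh\phi\,\vec w$ indeed makes $A_1,A_2$ antipodal on $S^{2}(M,a\cosh\phi)$, the projected curve $\hat X(\varphi)=\hat M+a\cosh^{2}\phi\,\cos\varphi\,e_{3}+a\cosh\phi\,\sin\varphi\,\vec u$ has linear eccentricity $a\cosh\phi\sinh\phi$, and since $\hat M=a\cosh\phi\sinh\phi\,e_{3}$ the origin sits exactly at one focus. It is worth noting, though, that you prove considerably more than the paper does. The paper's own proof consists only of the substitution of $\lambda_{1}=-ae^{-\phi}$ and $\lambda_{2}=ae^{\phi}$ into $X(\lambda)=a\partial_{0}+\lambda\vec w$ to obtain the stated formulas for $A_{1}$ and $A_{2}$, plus the observation that the pencil of planes through $A_{1},A_{2}$ inside $H(a,\tau)$ is parametrized by the normals $n(\xi)=\cos\xi\,e_{1}+\sin\xi\,e_{2}$; the assertions that the sections are circles centered at $M$, that they project to ellipses, and that the origin is a focus are left implicit, presumably to be read off from the parametrization in the preceding corollary and the ellipsoid lemma. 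Your antipodality argument (which forces every such plane through the center $M$, hence yields great circles) and your explicit focal-distance computation are precisely the missing steps; the focus claim in particular is the only nontrivial content of the first sentence of the lemma, and the paper never verifies it. The one point you gloss over slightly is the claim that $S^{2}(M,\rho)$ \emph{exhausts} $\C\cap H(a,\tau)$ — the corollary only gives the inclusion $S^{2}(M,\rho)\subset\C\cap H(a,\tau)$ — but the reverse inclusion is a routine solution of $\lpr{X}{X}=0$ on the affine hyperplane and does not affect the argument, since for the lemma you only need that the planes meet $\C$ \emph{at least} in those great circles.
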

\begin{proof}
The points $A_{1} = X(\lambda_{1})$ and $A_{2} = X(\lambda_{2})$ are given by  
$$A_{1} = a \partial_{0} - a e^{-\phi} (\sinh \phi \; \partial_{0} + \cosh \phi \; e_{3})  = 
a \frac{1+ e^{-2\phi}}{2}( \partial_0 - e_3),$$ 
$$A_{2} = a \partial_{0} + a e^{\phi} (\sinh \phi \; \partial_{0} + \cosh \phi \; e_{3}) = 
a \frac{1 + e^{2\phi}}{2}(\partial_0 + e_3).$$

\vspace{0.1cm}
Now, each one of those planes has a spacelike normal $n \in \mbox{Span}\{e_{1},e_{2},\vec{w}\}$ with $\lpr{\vec{w}}{n} = 0$, 
because by assumption $\vec{w}$ is a direction of all those planes. Hence   $\lpr{\partial_{0}}{n} = 0$. Thus the set of 
normal $n$ is given by $n(\xi) = \cos \xi \; e_{1} + \sin \xi \; e_{2}.$ 
\end{proof}

Next we can take the unit  future directed timelike vector $\tau$ and the unit spacelike vector $\vec{w}$ to construct a 
Minkowski referential, one for each planet, such that the elliptical motion in the correspondent rest-space 
$\euclidean(\partial_{0})$ is a section of the plane $\mbox{Span}[\tau, \nu]$ and of the lightcone 
$\C \subset \mink$. So, a circular motion in the rest-space $\euclidean(\tau)$ with the Sun in central position. 

We note that the projection of $\vec{w}$ into $\euclidean(\partial_{0})$, corresponds to the direction support of the 
aphelion and the perihelion of the elliptical planetary motion. 

\vspace{0.2cm}

In the following we construct other Minkowski referential associated to a spacelike plane $V$ and its orthogonal 
complement $V^{\perp}$, where the timelike vector $\tau \in V^{\perp}$ has minimal $\tau^{0}$ relative to any 
other observer $V^{\perp}$. 

\begin{lemma}\label{03}
Given a spacelike plane $V = \mbox{Span}\{v_{1},v_{2}\} \subset \mink$ with $\{v_1, v_2\}$ orthonormal, 
the unique orthonormal complement $T = V^{\perp}$ is a timelike plane  which has a basis $\{\tau,\nu\}$ such that

\begin{enumerate}
\item $\{\tau, v_{1},v_{2}, \nu\}$ is a Minkowski referential. 

\item The future directed timelike vector $\tau$ is given by 
$$\tau = \frac{1}{\sqrt{1 + (v_{1}^{0})^{2} + (v_{2}^{0})^{2}}} (\partial_{0} + v_{1}^{0} \; v_{1} + v_{2}^{0} \; v_{2}),$$ 
where $v_{i}^{0} = -\lpr{v_{i}}{\partial_{0}}$,  $i = 1,2.$ 

\item $\lpr{\nu}{\partial_{0}} = -\nu^{0} = 0$. 
\end{enumerate}

Therefore, if $\{t,n\}$ is an other positive oriented orthonormal basis for $T$ then $\tau^{0} \leq t^{0}$, and the equality 
holds if, and only if, \ $t = \tau$.  
\end{lemma}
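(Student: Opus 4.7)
The plan is to organize the argument into three stages: showing $T = V^{\perp}$ is a Lorentzian $2$-plane, producing the distinguished basis $\{\tau,\nu\}$ with the claimed properties, and verifying the minimization of $\tau^{0}$. The guiding idea throughout is that $\tau$ must coincide with the normalized $\partial_{0}$-projection onto $T$, and that the condition $\nu^{0}=0$ is \emph{forced} rather than chosen.

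First I observe that since $V$ is positive-definite of dimension $2$ inside the index-$1$ space $\mink$, the Lorentzian product restricted to $V^{\perp}$ is non-degenerate of signature $(-,+)$; thus $T = V^{\perp}$ is two-dimensional and timelike. To construct $\tau$, I note that the vector
$$u = \partial_{0} + v_{1}^{0}\, v_{1} + v_{2}^{0}\, v_{2} = \partial_{0} - \sum_{i=1,2} \lpr{\partial_{0}}{v_{i}}\, v_{i}$$
is, by definition, $\partial_{0}$ minus its $V$-component, hence $u \in T$. A direct computation gives $\lpr{u}{u} = -(1 + (v_{1}^{0})^{2} + (v_{2}^{0})^{2})$, so $u$ is timelike, and setting $\tau = u / \sqrt{1 + (v_{1}^{0})^{2} + (v_{2}^{0})^{2}}$ produces the formula in item $(2)$. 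The identity $-\lpr{\tau}{\partial_{0}} = \sqrt{1 + (v_{1}^{0})^{2} + (v_{2}^{0})^{2}} > 0$ makes $\tau$ future-directed. Pick now any unit spacelike $\nu \in T$ with $\lpr{\nu}{\tau}=0$ — such $\nu$ exists and is unique up to sign because $T$ is a Lorentzian plane. Expanding $\partial_{0}$ in the orthonormal basis $\{\tau,\nu,v_{1},v_{2}\}$ gives
$$\partial_{0} = \tau^{0}\, \tau + \beta\, \nu - v_{1}^{0}\, v_{1} - v_{2}^{0}\, v_{2},$$
where the $\tau$- and $v_{i}$-coefficients are recovered from $-\lpr{\partial_{0}}{\tau} = \tau^{0}$ and $\lpr{\partial_{0}}{v_{i}} = -v_{i}^{0}$. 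Imposing $\lpr{\partial_{0}}{\partial_{0}} = -1$ then forces $-(\tau^{0})^{2} + \beta^{2} + (v_{1}^{0})^{2} + (v_{2}^{0})^{2} = -1$, and inserting $(\tau^{0})^{2} = 1 + (v_{1}^{0})^{2} + (v_{2}^{0})^{2}$ yields $\beta = 0$. Therefore $\lpr{\nu}{\partial_{0}} = 0$, which is item $(3)$, and item $(1)$ is now immediate.

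For the closing inequality, any positive-oriented orthonormal basis $\{t,n\}$ of the Lorentzian plane $T$ with $t$ future-directed timelike is obtained from $\{\tau,\nu\}$ by a hyperbolic rotation, $t = \cosh\psi \; \tau + \sinh\psi \; \nu$ for some $\psi \in \real$. Because $\lpr{\nu}{\partial_{0}} = 0$, one gets $t^{0} = -\lpr{t}{\partial_{0}} = \cosh\psi \cdot \tau^{0}$, so $\tau^{0} \leq t^{0}$ with equality if and only if $\cosh\psi = 1$, i.e., $\psi = 0$, i.e., $t = \tau$. The only delicate point in the plan is item $(3)$: one must recognize that $\nu^{0} = 0$ cannot be imposed independently of items $(1)$ and $(2)$, but is a consequence of the fact that $\partial_{0}$ already lies in $\mbox{Span}\{\tau, v_{1}, v_{2}\}$, and the orthogonal expansion above isolates exactly this fact.
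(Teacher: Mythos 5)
Your proof is correct, and for items (1)--(2) it follows essentially the same route as the paper: both take $\tau$ to be the normalized orthogonal projection of $\partial_{0}$ onto $T$ (equivalently, $\partial_{0}$ minus its $V$-component) and verify $\lpr{\tau}{\tau}=-1$ by the same norm computation. Where you diverge is item (3) and the final inequality. The paper \emph{defines} $\nu$ as the unique unit vector in $T \cap \euclidean(\partial_{0})$ normalized by the volume form $\Omega(\mink)(\tau,v_{1},v_{2},\nu)=-1$, so that $\nu^{0}=0$ holds by construction (implicitly using that $T \cap \euclidean(\partial_{0})$ is one-dimensional); you instead take $\nu$ to be \emph{any} unit spacelike vector of $T$ orthogonal to $\tau$ and deduce $\lpr{\nu}{\partial_{0}}=0$ from the expansion of $\partial_{0}$ in the frame $\{\tau,\nu,v_{1},v_{2}\}$ together with $\lpr{\partial_{0}}{\partial_{0}}=-1$. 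Your version buys a slightly stronger statement (the condition $\nu^{0}=0$ is automatic, not a choice), at the cost of not pinning down the sign of $\nu$, which the paper's volume-form normalization does. You also supply a proof of the closing minimization $\tau^{0}\leq t^{0}$ via the hyperbolic-rotation parametrization $t=\cosh\psi\,\tau+\sinh\psi\,\nu$ and $t^{0}=\cosh\psi\,\tau^{0}$; the paper states this claim but gives no argument for it, so this is a genuine addition. The only point worth making explicit is that the inequality requires $t$ to be future directed (otherwise $t^{0}<0$), which is implicit in your use of a single branch of the hyperbola.
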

\begin{proof}
From definition of $\tau$ in statement (2), it follows that $\lpr{\tau}{v_{i}} = \lpr{v_{i}}{\partial_{0}} + v_{i}^{0} = 0$, 
then in particular we have $\{\tau, v_{1}, v_{2}\}$ is an orthonormal set of vectors. Moreover, since 

$\lpr{(\partial_{0} + v_{1}^{0} \; v_{1} + v_{2}^{0} \; v_{2})}{(\partial_{0} + v_{1}^{0} \; v_{1} + v_{2}^{0} \; v_{2})} = 
-1 - 2(v_{1}^{0})^{2} + (v_{1}^{0})^{2} - 2(v_{2}^{0})^{2} + (v_{2}^{0})^{2} = - (1 + (v_{1}^{0})^{2} + (v_{2}^{0})^{2}),$ 
one has  $\lpr{\tau}{\tau} = -1$. 

The condition (3) follows from taking the unique unit vector $\nu \in T \cap \euclidean$ 
such that $\Omega(\mink)(\tau, v_{1},v_{2}, \nu) = -1.$  
\end{proof} 
\vspace{0.1cm}

\begin{corol}
The set of Lorentz transformations that preserves the pair $(V,V^{\perp})$, is given by a trigonometric angle function $\theta$ 
and a hyperbolic angle function $\phi$, such that in matrix notation, results in
\begin{equation}
\left[\begin{matrix} e_{1}(\theta) \\ e_{2}(\theta) \end{matrix}\right] = 
\left[\begin{matrix} \cos \theta & - \sin \theta \\ -\sin \theta & \cos \theta \end{matrix}\right] \; 
\left[\begin{matrix} e_{1} \\ e_{2} \end{matrix}\right] \; \; \mbox{ and } \; \; 
\left[\begin{matrix} t(\phi) \\ n(\phi) \end{matrix}\right] = 
\left[\begin{matrix} \cosh \phi & \sinh \phi \\ \sinh \phi & \cosh \phi \end{matrix}\right] \; 
\left[\begin{matrix} \tau \\ \nu \end{matrix}\right].
\end{equation}
\end{corol}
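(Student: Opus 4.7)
The plan is to exploit the direct-sum decomposition $\mink = V \oplus V^{\perp}$, which is orthogonal by construction, and observe that any Lorentz transformation $L$ preserving the pair $(V, V^{\perp})$ decouples into two independent pieces $L|_V$ and $L|_{V^{\perp}}$. Since $L$ preserves the Lorentz product and leaves each summand invariant, the restriction $L|_V$ is a linear isometry of the spacelike plane $V$ (which inherits a positive-definite inner product from $\mink$), while $L|_{V^{\perp}}$ is a linear isometry of the Lorentzian plane $V^{\perp}$ equipped with a $(-,+)$ form, by Lemma \ref{03}.

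First I would describe $L|_V$. Because $V$ carries a Euclidean inner product in the basis $\{e_1,e_2\}$ (orthonormal and spacelike by Lemma \ref{03}), the isometry group is $O(2)$. Restricting to the component connected to the identity (i.e., those preserving orientation within $V$, which is the natural requirement for a Lorentz transformation in the extended Lorentz group, preserving the volume form $\Omega(\mink)$), we get $SO(2)$, whose elements in that basis are exactly rotation matrices parametrized by $\theta$. This yields the first matrix in the statement.

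Second, I would describe $L|_{V^{\perp}}$. In the basis $\{\tau,\nu\}$ of $V^{\perp}$ with $\lpr{\tau}{\tau}=-1$, $\lpr{\nu}{\nu}=1$, $\lpr{\tau}{\nu}=0$, the isometry group is $O(1,1)$. The subgroup preserving both the time orientation (so that $\tau$ is mapped to a future-directed timelike vector, consistent with the definition of Minkowski frame) and the orientation of $V^{\perp}$ is the identity component $SO^{+}(1,1)$, whose elements are precisely the hyperbolic rotations
\[
\begin{pmatrix} \cosh\phi & \sinh\phi \\ \sinh\phi & \cosh\phi \end{pmatrix},
\]
parametrized by a single hyperbolic angle $\phi \in \real$. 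This yields the second matrix.

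The only real thing to check is that the decomposition is forced, i.e., every Lorentz transformation fixing the pair factors as a direct product of these two pieces: this is immediate, because $L$ acts on $V$ and on $V^{\perp}$ separately and there is no off-diagonal block in the basis $\{\tau, e_1, e_2, \nu\}$. I do not expect a main obstacle here; the only care needed is to justify restricting to the identity components (which is what the statement parametrizes by continuous angle functions $\theta$ and $\phi$), and this is where invoking preservation of $\Omega(\mink)$ together with the future-directed condition of the Minkowski frame settles the matter.
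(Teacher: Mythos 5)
The paper states this corollary with no proof at all, so there is nothing of the authors' to compare your argument against; your route --- decomposing $L$ block-diagonally along the orthogonal splitting $\mink = V \oplus V^{\perp}$, identifying the isometries of the positive-definite plane $V$ with $O(2)$ and those of the Lorentzian plane $V^{\perp}$ with $O(1,1)$, and then passing to the one-parameter families of rotations and hyperbolic rotations --- is the standard and correct way to obtain the displayed matrices, and the reduction to block form is indeed immediate from the invariance of $V$ and $V^{\perp}$.

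Two caveats. First, your closing claim that preservation of $\Omega(\mink)$ together with the future-directed condition ``settles'' the restriction to the identity components is not quite right: the map acting as $e_{1}\mapsto e_{1}$, $e_{2}\mapsto -e_{2}$ on $V$ and as $\tau\mapsto\tau$, $\nu\mapsto-\nu$ on $V^{\perp}$ preserves all inner products, preserves the pair $(V,V^{\perp})$, fixes the future-directed $\tau$, and has determinant $(-1)(-1)=1$, yet its restriction to $V^{\perp}$ is $\mathrm{diag}(1,-1)$ in the basis $\{\tau,\nu\}$, which is no hyperbolic rotation. So volume form plus time orientation only pin down the product of the two block determinants; to land exactly on $SO(2)\times SO^{+}(1,1)$ you must additionally impose orientation preservation on $V$ (equivalently on $V^{\perp}$) or restrict to the connected component of the identity. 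The corollary as printed is silent on this, so the imprecision is as much the paper's as yours, but it should be stated rather than waved away. Second, the first matrix printed in the corollary is symmetric with determinant $\cos 2\theta$ and row inner product $-\sin 2\theta$, hence is not an isometry of $V$ for generic $\theta$; your argument correctly produces the genuine rotation matrix, so this is evidently a sign typo in the paper, but it deserves to be flagged since you assert that your $SO(2)$ description ``yields the first matrix in the statement,'' which, as literally written, it does not.
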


\vspace{0.2cm}
We note that a relationship between those two constructions of the Minkowski referential tells us about the direction of 
velocity $\vec{w}$ defined in  Proposition 2.4 and the vector $\nu$ defined in Lemma \ref{03}. In fact, 
because $\nu$ is orthogonal to both vectors $\tau$ and $\partial_{0}$, it follows that $\lpr{\nu}{\vec{w}} = 0$.  

\begin{corol}
If the basis given by Proposition 2.4 is $\{\partial_{0}, e_{1}, e_{2}, e_{3}\}$ and the basis given by 
Lemma \ref{03} is $\{\tau, v_{1},v_{2}, \nu\}$, then, there exists a trigonometric angle $\theta(\nu)$ such that 
\ $\nu = \cos \theta(\nu) \; e_{1} - \sin \theta(\nu) \; e_{2}$, 
and for $\vec{w}$ as defined in Proposition 2.4, the hyperbolic angle $\phi$ is such that
\begin{equation}
\lpr{\vec{w}}{e_{3}} = \cosh \phi = - \lpr{\partial_{0}}{\tau} = \tau^{0}.
\end{equation} 
This last equation shows that the adiabatic process and the relativistic contraction of distances in the 
subsection 1.5, corresponds to an observer $\tau$ that relative to the observer $\partial_{0}$, has minimal kinetic energy 
$\tau^{0}$, and the eccentricity of ellipsoid $\E$ is given by $e = \tanh \phi$ where the hyperbolic angle is given by 
$\phi = \ln(\tau^{0} + \sqrt{(\tau^{0})^{2} - 1})$.
\end{corol}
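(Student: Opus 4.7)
The plan is to reduce the statement to three short computations, using only the explicit descriptions of $\tau$ and $\vec{w}$ from Proposition 2.4 together with the defining orthogonalities of $\nu$ coming from Lemma \ref{03} and the corollary immediately preceding the statement.

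First I would locate $\nu$ inside the span of $\{e_{1},e_{2}\}$. By Lemma \ref{03}(3) one has $\langle\nu,\partial_{0}\rangle=0$, so $\nu\in\euclidean(\partial_{0})=\mathrm{Span}\{e_{1},e_{2},e_{3}\}$. Since $\nu\in T=V^{\perp}$ and $\tau\in T$, also $\langle\nu,\tau\rangle=0$. Substituting the formula $\tau=\cosh\phi\,\partial_{0}+\sinh\phi\,e_{3}$ from Proposition 2.4 and using $\langle\nu,\partial_{0}\rangle=0$ yields $\sinh\phi\,\langle\nu,e_{3}\rangle=0$. In the non-degenerate case $\phi>0$ this forces $\langle\nu,e_{3}\rangle=0$, so $\nu\in\mathrm{Span}\{e_{1},e_{2}\}$; since $\nu$ is a unit spacelike vector we can parametrize it in the orthonormal basis $\{e_{1},e_{2}\}$ and write $\nu=\cos\theta(\nu)\,e_{1}-\sin\theta(\nu)\,e_{2}$ for a unique trigonometric angle $\theta(\nu)$.

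Next I would read off the identities involving $\phi$ directly from Proposition 2.4. Using $\vec{w}=\sinh\phi\,\partial_{0}+\cosh\phi\,e_{3}$ and $\langle\partial_{0},e_{3}\rangle=0$, $\langle e_{3},e_{3}\rangle=1$, one gets $\langle\vec{w},e_{3}\rangle=\cosh\phi$. Similarly, from $\tau=\cosh\phi\,\partial_{0}+\sinh\phi\,e_{3}$ and $\langle\partial_{0},\partial_{0}\rangle=-1$ one finds $-\langle\partial_{0},\tau\rangle=\cosh\phi$. Since $\tau^{0}$ denotes the $\partial_{0}$-component of $\tau$, the identity $\tau^{0}=-\langle\partial_{0},\tau\rangle$ closes the chain $\langle\vec{w},e_{3}\rangle=\cosh\phi=-\langle\partial_{0},\tau\rangle=\tau^{0}$.

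Finally I would interpret the resulting quantity. The minimality of $\tau^{0}$ among positive-oriented observers of the timelike plane $T$ is already the concluding assertion of Lemma \ref{03}, so no further argument is needed there. For the eccentricity statement, subsection 1.5 identifies the eccentricity of the revolution ellipsoid $\E$ with $\tanh\phi$; inverting $\cosh\phi=\tau^{0}$ by the standard formula $\mathrm{arccosh}(x)=\ln(x+\sqrt{x^{2}-1})$ for $x\ge 1$ gives $\phi=\ln(\tau^{0}+\sqrt{(\tau^{0})^{2}-1})$, which by the identity $\tanh(\mathrm{arccosh}\,x)=\sqrt{x^{2}-1}/x$ is consistent with $e=\tanh\phi$. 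The only step that requires a little care is the first one, because it is the place where one must combine the two independent orthogonality conditions on $\nu$ to eliminate the $e_{3}$-component; everything else is a direct substitution.
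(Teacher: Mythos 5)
Your proposal is correct, and it follows the same line the paper itself indicates: the paper states this corollary without a formal proof, offering only the preceding remark that $\nu$ is orthogonal to both $\tau$ and $\partial_{0}$ (hence to all of $\mbox{Span}\{\partial_{0},\tau\}$, which contains $e_{3}$ and $\vec{w}$), and your three computations simply make that remark, the substitution $\lpr{\vec{w}}{e_{3}}=\cosh\phi=-\lpr{\partial_{0}}{\tau}=\tau^{0}$, and the inversion $\phi=\mathrm{arccosh}(\tau^{0})$ explicit. Your extra care in using $\phi>0$ (i.e.\ $\sinh\phi\neq 0$) to eliminate the $e_{3}$-component of $\nu$ is exactly the detail the paper glosses over, so nothing is missing.
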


\vspace{0.1cm}
\subsection{Generalized Lorentz group} Let $\vec v$ be a field of velocity 
$\vec{v} : U \subset \real^{3} \longrightarrow TU \equiv \real^{3},$ 
where $U$ is an open subspace of the topology of Euclidean vector space $\real^{3}$. For each $p \in U$, we assume  
$\vec{v}(p) = v(p) \hat{\partial}_{v}(p)$ such that
$$0 \leq v(p) \leq 1, \; \; \mbox{ light speed } \; \; c = 1 \; \; {\rm and} \ \  
\hat{\partial}_{v}(p) = \frac{1}{v(p)} \vec{v}(p) \ \ {\rm when} \ \ \vec{v}(p) \neq 0.$$ 

Now, Let $\{\hat{\partial}_{v}(p), \hat{\partial}_{2}(p), \hat{\partial}_{3}(p)\}$ be an inertial (ou Galilei) 
referential for $T_{p}U$ associated to each field of velocity, where we are assuming that it is a pointwise set of 
orthonormal vectors at each point $p \in U$.  Then the equations  
\begin{equation}
V(p) = \partial_{0} + v(p) \; \hat{\partial}_{v}(p) \; \; \mbox{ and } \; \; 
N(p) = \frac{-1}{\lpr{V(p)}{V(p)}} (v(p) \partial_{0} + \hat{\partial}_{v}(p)) 
\end{equation} 
define the lifting of $\vec v$ to $\mink$, namely, $V : \real_{1}^{1} \times U \longrightarrow T\real_{1}^{1} 
\times U \subset \mink$, and its orthogonal field $N$, respectively. Then we have the following result.

\begin{prop}
For each $p \in U$ the lifting of the fields of velocity $\vec{v} \neq 0$ are such that
$$\lpr{V(p)}{V(p)} = g_{00}(p) = -1 + v^{2}(p) \; \; \mbox{ and } \; \; \lpr{N(p)}{N(p)} = g_{11}(p) = \frac{1}{1 - v^{2}(p)},$$ 
and by definition $g_{01} = \lpr{V(p)}{N(p)} = 0$. Moreover 
$$V(p) \otimes N(p) - N(p) \otimes V(p) = \partial_{0} \otimes \hat{\partial}_{v}(p) -  \hat{\partial}_{v}(p) \otimes \partial_{0},$$ 
that means that the volume form of $\mink$ is preserved by $\{V(p), N(p)\}$, or equivalently, the set 
$\{{\partial}_{0}(p), \hat{\partial}_{v}(p), \hat{\partial}_{2}(p), \hat{\partial}_{3}(p)\}$ is a Minkowski referential 
and the lifting of the inertial referential $\{\hat{\partial}_{v}(p), \hat{\partial}_{2}(p), \hat{\partial}_{3}(p)\}$ preserves 
the volume form of $\mink$. 
\end{prop}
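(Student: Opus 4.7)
The plan is to treat each claim as a short, direct computation relying on two standing facts: $\partial_{0}$ satisfies $\lpr{\partial_{0}}{\partial_{0}}=-1$, and every vector of the inertial frame $\{\hat{\partial}_{v}(p),\hat{\partial}_{2}(p),\hat{\partial}_{3}(p)\}$ lies in $\euclidean$, hence is spacelike unit and Minkowski-orthogonal to $\partial_{0}$. Throughout, I would suppress the point $p$ and write $v=v(p)$.

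First I would verify the three metric relations by bilinear expansion. Expanding gives $\lpr{V}{V}=\lpr{\partial_{0}+v\hat{\partial}_{v}}{\partial_{0}+v\hat{\partial}_{v}}=-1+v^{2}$, so in particular the normalizing factor in the definition of $N$ is $-1/\lpr{V}{V}=1/(1-v^{2})$. A second expansion yields $\lpr{v\partial_{0}+\hat{\partial}_{v}}{v\partial_{0}+\hat{\partial}_{v}}=-v^{2}+1$, and multiplying by the square of the normalizing factor gives $\lpr{N}{N}=(1-v^{2})/(1-v^{2})^{2}=1/(1-v^{2})$. Finally, up to the positive scalar $1/(1-v^{2})$, the cross bracket reduces to $\lpr{\partial_{0}+v\hat{\partial}_{v}}{v\partial_{0}+\hat{\partial}_{v}}=-v+v=0$.

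Next I would establish the tensor identity by direct expansion of $V\otimes N$ and $N\otimes V$. The "diagonal" terms proportional to $\partial_{0}\otimes\partial_{0}$ and $\hat{\partial}_{v}\otimes\hat{\partial}_{v}$ appear with the same coefficient in both products and cancel under antisymmetrization. After pulling out the common factor $1/(1-v^{2})$, what remains is $(1-v^{2})\bigl(\partial_{0}\otimes\hat{\partial}_{v}-\hat{\partial}_{v}\otimes\partial_{0}\bigr)$, yielding the stated formula. Equivalently, the $2$-form $V\wedge N$ equals $\partial_{0}\wedge\hat{\partial}_{v}$.

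For the volume-form assertion, I would first observe that $\{\partial_{0},\hat{\partial}_{v},\hat{\partial}_{2},\hat{\partial}_{3}\}$ satisfies the three axioms of a Minkowski frame of Section 2: orthonormality follows from the hypothesis that the inertial triple is orthonormal in $T_{p}U\subset\euclidean$, while the future-directed condition is trivial since $e_{0}=\partial_{0}$. Wedging the $2$-form identity from the previous step with $\hat{\partial}_{2}\wedge\hat{\partial}_{3}$ then gives $V\wedge N\wedge\hat{\partial}_{2}\wedge\hat{\partial}_{3}=\partial_{0}\wedge\hat{\partial}_{v}\wedge\hat{\partial}_{2}\wedge\hat{\partial}_{3}$, which coincides, up to the convention $\Omega(\mink)=-dy^{0}\wedge dy^{1}\wedge dy^{2}\wedge dy^{3}$, with the $4$-volume form of $\mink$. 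I do not anticipate any real obstacle; the only bit of care is tracking the sign in $\Omega(\mink)$ and noticing that "preserves the volume form" is exactly the statement that replacing $(\partial_{0},\hat{\partial}_{v})$ by the lifted pair $(V,N)$ in the wedge leaves the $4$-form unchanged, which is what the tensor identity delivers.
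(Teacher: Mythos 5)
Your computations are all correct: the bilinear expansions give $\lpr{V}{V}=-1+v^{2}$, hence the normalizing factor $1/(1-v^{2})$ in $N$, the cross term $\lpr{V}{N}$ vanishes, and the antisymmetrized tensor product collapses to $\partial_{0}\otimes\hat{\partial}_{v}-\hat{\partial}_{v}\otimes\partial_{0}$ exactly as you describe, after which wedging with $\hat{\partial}_{2}\wedge\hat{\partial}_{3}$ gives the volume-form statement. The paper states this proposition with no proof at all, so there is no argument to compare against; your direct verification is the routine computation the authors evidently had in mind and it correctly fills the omission.
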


Next we are interested in obtaining the Schwarzschild metric tensor associated to radial fields of velocity 
$v : \euclidean(\partial_{0}) \to \euclidean(\partial_{0})$ such that $\vert v(p) \vert < c$. Here the radial term
means that $(0,0,0,0) \in \{l \in \euclidean(\partial_{0}) : l= p + \lambda v(p), \ \lambda \in \mathbb{R}\}$.
Now, we are ready to define the extended Lorentz group as follows.
\begin{dfn}
The extended Lorentz group is a set of linear transformations $L$ of $\mink$ that preserves the volume form and 
the future directed. That means the set of $L$ with
$$ \det[L] = 1 \; \; \; \mbox{ and } \;\; \; \lpr{L(\partial_{0})}{L(\partial_{0})} < 0.$$ 
For each $L$ that belongs to the extended Lorentz group the proper time associated to $L$ is the $1$-form given by 
$$d\tau = \sqrt{- \lpr{L(\partial_{0})}{L(\partial_{0})}} \; dt,$$ 
and its line element, relatives to the Minkowski frame $\{\partial_{i}\}_{i = 0}^{3}$ and to
$\{L(\partial_{i})\}_{i = 0}^{3}$, is given by 
$$ds^{2}(L) = \sum_{i,j = 0}^{3} g_{ij} dx^{i} dx^{j} \; \mbox{ where } \; g_{ij} = \lpr{L(\partial_{i})}{L(\partial_{j})}.$$ 

\end{dfn}

\vspace{0.2cm}

\begin{example}[Newtonian gravitational fields] 
Let $\vec v$ be the radial field, defined in $U \subset \real^{3}$ and given by
$$\frac{1}{c} \vec{v}(r) = \sqrt{\frac{2MG}{r c^{2}}} \hat{\partial}_{r} \; \; \; \;
\mbox{ where } \; \; \; \; \hat{\partial}_{r} = \frac{\vec{r}}{r}
$$ 
with $r > 2MG/c^{2}$, $M > 0$ and $G$ being the Newtonian gravitational constant. 

\vspace{0.1cm}
We take the frame for spherical coordinates given by $\Gamma(r,\varphi,\theta)$ to define the lifting
$$V(r) = \partial_{0} + \sqrt{\frac{2MG}{r c^{2}}} \hat{\partial}_{r} \; \; \; \mbox{ and } \; \; \;
N(r) = \frac{1}{1 - \frac{2MG}{r c^{2}}} \left(\sqrt{\frac{2MG}{r c^{2}}} \partial_{0} + \hat{\partial}_{r}\right),$$ 
and simultaneously preserves the others two components of $\hat{\partial}_{\varphi}$ and $\hat{\partial}_{\theta}$. 
Then, we  have $\tilde M$ locally parametrized by the new coordinates 
$(\tilde t, \tilde r, \varphi, \theta)$ in which the Lorentzian tensor is given by the line element 
$$ds^{2}(\tilde{M}) = -\left(1 - \frac{2MG}{\tilde{r} c^{2}}\right) d\tilde{t}^{2} + \frac{1}{1 - \frac{2MG}{\tilde{r} c^{2}}} \; 
d\tilde{r}^{2} + \tilde{r}^{2} (d\varphi^{2} + \sin^{2} \varphi \; d\theta^{2}).$$

Now taking the new parameters $r$ and $t$ given by the transformation  
$$ r = \frac{c^{2}}{2MG} \tilde{r} \; \; \; \mbox{ and } \; \; \;
t = \frac{c^{2}}{2MG} \tilde{t},$$ 
we obtain for $r > 1$ that the metric tensor for the basic manifold $M$ is given by 
$$ds^{2}(M) = -\frac{r - 1}{r} dt^{2} + \frac{r}{r - 1} dr^{2} + r^{2} d\varphi^{2} + r^{2} \sin^{2} \varphi \; d\theta^{2}.$$ 
\end{example}

\vspace{0.2cm}
\begin{example}[Coulombian electrostatic fields]
Let $(M,+Q)$ and $(m,-q)$ be Coulombian charges $+Q$ and $-q$, with mass $M$ and $m$, respectively. 
It assumes that the distance between them is $r>0$. The Coulomb Law for this charges system defines an 
attractive force between the two charges given by 
$$\vec{F} = \frac{-qQ}{4\pi \epsilon_{0} r^{2}} \hat{\partial}_{r},$$
where the factor $\frac{1}{4 \pi \epsilon_0}$ is the Coulombian constant. 

Now taking the Newton's definition of force $\vec{F} = m \displaystyle \frac{d \vec{v}}{dt}$ in the charge $(m, -q)$, 
results in 
$$\frac{d \vec{v}}{dt} = \frac{-qQ}{4\pi \epsilon_{0} m r^{2}} \hat{\partial}_{r}.$$
It assumes  the values $Q = e = \vert -q \vert$, where $e$ is the absolute value of the charge of one electron. We obtain 
the fields of radial velocity for the electron, namely,  
$$\frac{\sqrt{2}}{2c} \vec{v} = - \sqrt{\frac{e^{2}}{4\pi \epsilon_{0} mc^{2} r}} \; \hat{\partial}_{r}.$$   
\end{example}

\vspace{0.2cm}
We note that the second example above can be used for constructing a model for an ideal hydrogen atom. Indeed, this was 
made by Bohr. Next subsection we analyze it.

\vspace{0.1cm}
\subsection{The $Z$-atom of Bohr} According reference \cite{P}, to construct his atomic theory, 
Bohr introduces the following postulates: 

\vspace{0.2cm}
1. In an atom with nucleon containing an integer number $Z$ of protons  (thus with charge $+Ze$), there exists a set of 
orbital positions $n = 1,2,3,...$ such that each electron (of charge $-e$) has linear momentum $p = m v$ given by 
$$m v = \frac{h n}{2 \pi r} \; \; \; \; \mbox{ where } \; \; \; \; h \approx 6,62 \times 10^{-34} (SI) \; \; 
\mbox{is the Planck's constant}.$$ 

2. The centripetal force $F_c$ is given by Coulomb's Law by 
$$F_{c} = \frac{m v^{2}}{r} = \frac{Z e^{2}}{4 \pi \epsilon_{0} r^{2}} \; \; \;  \mbox{ for each orbital electron}.$$  

\vspace{0.1cm}
We note that since the potential is a function of type $r \mapsto K/r$, this second postulate is 
equivalent to assume Kepler's Law for this pair of mass-charges. 

\vspace{0.2cm}
3. In a Z-atom, the electrons in one orbital position do not emit electromagnetic energy. Only when they jump from one orbital 
to another orbital position, they emit or absorb energy according the Bohr postulate given by the relationship 
$$E_{Total}(n + 1) - E_{Total}(n) = - h \nu(n, n+1),$$ 
where $E_{Total} (n)$ is the total energy in the $n$-orbital, which is the sum of the kinetic energy and Coulomb potential energy
associated to each orbital position, and $\nu (n, n+1)$ is the electromagnetic frequency emitted when the electron passes da 
$n$-orbit to the $(n+1)$-orbit. The signal in $-h \nu(n,n+1)$ means that the atom emits electromagnetic energy or radiation with 
frequency $\nu(n,n+1)$ when jump from a orbital to other orbital of less energy level. 

Since, the kinetic energy $E_{Kin}$ and the Coulombian potential energy $E_{Pot}$ are given by 
$$E_{Kin} = \frac{1}{2} mv^{2} = \frac{Z e^{2}}{8 \pi \epsilon_{0} r} \; \; \; \; \mbox{ and } \; \; \; \; 
E_{Pot} = -\frac{Z e^{2}}{4 \pi \epsilon_{0} r}$$ it follows that  $E_{Total} = - E_{Kin}$. 

\vspace{0.2cm}
Next we show the following result which is already known.
\vspace{0.2cm}
\begin{prop}
From the Bohr's postulates 1 and 2 it follows that the velocity $v$ of the electron and the product $mr$, depend of $Z$ and $n$ 
in the form  

\begin{equation}\label{19}
v(Z,n) = \frac{Z e^{2}}{2 h \epsilon_{0} n} \; \; \; \mbox{ and } \; \; \; m r(Z,n) = \frac{h^{2} \epsilon_{0}}{\pi Z e^{2}} n^{2}.
\end{equation} 
Moreover, the Bohr radius $a_0$ for the hydrogen atom (here $Z = 1 = n$) and the Sommerfeld constant $\hat \beta$ are given by   
\begin{equation}\label{20}
 m a_{0} = \frac{h^{2} \epsilon_{0}}{\pi e^{2}} \; \; \; \mbox{ and} \; \; \; 
 \hat{\beta} = \frac{\hat{v}}{c} = \frac{e^{2}}{2ch\epsilon_{0}}, \; \; \mbox{with}  \; \; \hat v = \frac{e^2}{2h \epsilon_0}, 
\end{equation}
where $a_{0} \approx 5,29 \times 10^{-11} \mbox{meters}$ and $\hat{\beta} = 1/ 137$ is 
a physics dimensionless constant.     
\end{prop}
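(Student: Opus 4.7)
The plan is to treat Bohr's postulates 1 and 2 as an algebraic system in the two unknowns $v$ and the product $mr$ (as functions of $Z$ and $n$), and then to read off the constants in (\ref{20}) by specialization to $Z=n=1$. No differential geometry from the preceding sections is actually needed; both formulas in (\ref{19}) are going to fall out of one elimination.

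First I would rewrite postulate 1 as the angular-momentum identity $mvr = hn/(2\pi)$, and postulate 2, after multiplying by $r$, as $mv^{2} = Ze^{2}/(4\pi\epsilon_{0} r)$, i.e.\ $(mvr)\,v = Ze^{2}/(4\pi\epsilon_{0})$. Substituting $mvr = hn/(2\pi)$ into this last identity isolates $v$ at once, giving
\[
v(Z,n)=\frac{Ze^{2}/(4\pi\epsilon_{0})}{hn/(2\pi)}=\frac{Ze^{2}}{2h\epsilon_{0}\,n},
\]
which is the first equation in (\ref{19}). To obtain the second, I would eliminate $v$ from postulate 2 by using $v = hn/(2\pi\,mr)$ coming from postulate 1; the substitution produces
\[
\frac{m}{r}\cdot\frac{h^{2}n^{2}}{4\pi^{2}\,m^{2}r^{2}}=\frac{Ze^{2}}{4\pi\epsilon_{0}\,r^{2}},
\]
and cancelling $r^{2}$ and solving for $mr$ yields $mr(Z,n)=h^{2}\epsilon_{0}\,n^{2}/(\pi Z e^{2})$.

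Finally, I would specialize both identities to the hydrogen ground state $Z=n=1$. The value $ma_{0}=mr(1,1)=h^{2}\epsilon_{0}/(\pi e^{2})$ is the first equation in (\ref{20}), and $\hat v = v(1,1)=e^{2}/(2h\epsilon_{0})$ gives, after division by $c$, the Sommerfeld constant $\hat\beta = e^{2}/(2ch\epsilon_{0})$. I do not expect any genuine obstacle: the whole proof is a two-step elimination in a pair of scalar equations. The only subtlety worth noting in the write-up is that postulates 1 and 2 do not determine $m$ and $r$ separately but only their product, which is precisely why the statement writes $mr(Z,n)$ and $ma_{0}$ rather than $r(Z,n)$ and $a_{0}$ alone; reporting the two unknowns as the pair $(v,\,mr)$ is what makes the system invertible.
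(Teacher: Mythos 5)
Your proposal is correct and follows essentially the same route as the paper: both proofs equate the two expressions for $mv^{2}$ coming from postulates 1 and 2 to isolate $v$, and then recover $mr$ from the angular-momentum relation (the paper uses $mr = hn/(2\pi v)$ directly, which is just a slightly shorter form of your second elimination). The specialization to $Z=n=1$ for $ma_{0}$, $\hat v$ and $\hat\beta$ is identical.
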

\begin{proof}
First, we observe that $mv v = \displaystyle \frac{ v h n}{2 \pi r} = \frac{Z e^{2}}{4 \pi \epsilon_{0} r}$ implies 
$v = \displaystyle \frac{Z e^{2}}{2\epsilon_{0} h n}$. So, for $Z = 1 = n$ it follows 
$$\hat{v} = \frac{e^{2}}{2 h \epsilon_{0}} \; \; \; \mbox{ and } \; \; \; \hat{\beta} = \frac{e^{2}}{2ch\epsilon_{0}}.$$ 
Now, for getting the expression $mr$ we note that
$$m r(Z,n) = \frac{h n}{2 \pi v} = \frac{h n}{2 \pi} \; \frac{2 \epsilon_{0} h n}{Z e^{2}} = 
\frac{h^{2} \epsilon_{0}}{\pi Z e^{2}} n^{2}.$$
\end{proof}

\begin{corol}\label{27}
For each Bohr's $Z$-atom one has that 
$\beta(Z,n) = \hat{\beta} \displaystyle \frac{Z}{n}$, where from equation (9) the term  
$\beta(Z,n)= \frac{v(Z,n)}{c}$. Therefore  
$$ \beta(Z,1) = 1 \; \Longleftrightarrow \; 
Z = \frac{1}{\hat{\beta}} = 137.$$ 

Moreover, $\beta(Z,Z) = \beta(n,n) = \hat{\beta}$.  Assuming that \lq \lq all material system of bodies-energy the velocity of each 
bodies-energy is less of velocity of light $c$, we have $Z$-atoms for maximal $Z_{max} = 136 < 137$. 
The $137$-atom will be called of {\it  ideal limit atom}. 
\end{corol}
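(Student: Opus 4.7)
The plan is to derive everything directly from the formulas established in Proposition 2.11, since the corollary is essentially a numerical reading of equations (\ref{19}) and (\ref{20}). First I would recall from Proposition 2.11 that
$$v(Z,n) = \frac{Z e^{2}}{2 h \epsilon_{0} n}, \qquad \hat{v} = \frac{e^{2}}{2 h \epsilon_{0}}, \qquad \hat{\beta} = \frac{e^{2}}{2 c h \epsilon_{0}},$$
so dividing $v(Z,n)$ by $c$ and isolating the factor $e^{2}/(2 c h \epsilon_{0}) = \hat{\beta}$ immediately produces
$$\beta(Z,n) = \frac{v(Z,n)}{c} = \frac{Z}{n}\,\frac{e^{2}}{2 c h \epsilon_{0}} = \hat{\beta}\,\frac{Z}{n}.$$
This identification of the two sides is the first assertion.

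Next I would specialize this formula to the two relevant cases. Taking $n = 1$ gives $\beta(Z,1) = \hat{\beta}\,Z$, so the equation $\beta(Z,1) = 1$ is equivalent to $Z = 1/\hat{\beta}$. Using the numerical value $\hat{\beta} = 1/137$ from (\ref{20}) one obtains $Z = 137$. Taking $n = Z$ produces the symmetric identities $\beta(Z,Z) = \hat{\beta}$, and likewise $\beta(n,n) = \hat{\beta}$ by the same substitution. These are just direct substitutions, no extra input is needed.

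Finally I would address the physical conclusion. The assumption that every material body in a $Z$-atom moves with speed strictly less than $c$ translates into the inequality $\beta(Z,n) < 1$ for every admissible pair $(Z,n)$. Since $n \mapsto \beta(Z,n)$ is decreasing in $n$, the most restrictive constraint occurs at the innermost orbit $n = 1$, giving $\hat{\beta}\,Z < 1$, i.e.\ $Z < 137$. Hence the largest integer $Z$ compatible with the postulates is $Z_{\max} = 136$, and the borderline value $Z = 137$ corresponds to the limit speed $v = c$, justifying the designation \emph{ideal limit atom}.

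There is no real obstacle here; the corollary is an algebraic rearrangement together with a physical reading of the inequality $v < c$. The only subtlety worth flagging is that the argument privileges the $n = 1$ orbit as the worst case, which is why the bound $Z_{\max} = 136$ is sharp rather than being weakened by the higher orbitals.
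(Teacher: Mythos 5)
Your proposal is correct and follows exactly the route the paper intends: the corollary is stated as an immediate consequence of Proposition 2.11, obtained by dividing $v(Z,n) = \frac{Ze^{2}}{2h\epsilon_{0}n}$ by $c$, recognizing the factor $\hat{\beta} = \frac{e^{2}}{2ch\epsilon_{0}}$, and then specializing to $n=1$ and $n=Z$ and imposing $\beta < 1$. Your explicit remark that $n=1$ is the binding case is a small but welcome clarification that the paper leaves implicit.
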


\begin{prop}
From Bohr postulates it follows that in a hydrogen atom, the jump of one electron from one orbital position to other orbital position 
emits or absorb energy given by   
\begin{equation}\label{23}
h \nu(n,n+1) = \frac{e^{2}}{8 \pi \epsilon_{0} a_{0}} \left\{\frac{1}{n^{2}} - \frac{1}{(n+1)^{2}}\right\} = 
\frac{m e^{4}}{8 h^{2} \epsilon_{0}^{2}} \left\{\frac{1}{n^{2}} - \frac{1}{(n+1)^{2}}\right\}.
\end{equation}
\end{prop}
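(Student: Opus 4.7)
The plan is to treat this as a direct computation that combines the expression $E_{Total} = -E_{Kin}$ established from the Bohr potential/kinetic balance with the formula for $mr(Z,n)$ from the preceding Proposition, specialized to the hydrogen case $Z = 1$.

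First I would compute the total energy as a function of $n$ for the hydrogen atom. Starting from the identity $E_{Total} = -E_{Kin} = -\dfrac{Ze^{2}}{8\pi \epsilon_{0} r}$ derived in Bohr's postulate 3, and substituting the radius extracted from the previous Proposition 2.14, namely $r(Z,n) = \dfrac{h^{2}\epsilon_{0}}{\pi Z e^{2} m}\, n^{2}$, I obtain
$$
E_{Total}(Z,n) \;=\; -\frac{Z^{2} e^{4} m}{8 h^{2} \epsilon_{0}^{2}\, n^{2}} \, .
$$
Setting $Z = 1$ and using the Bohr-radius identity $m a_{0} = \dfrac{h^{2}\epsilon_{0}}{\pi e^{2}}$ (equivalently $a_{0} = \dfrac{h^{2}\epsilon_{0}}{\pi m e^{2}}$) rewrites this in the two equivalent shapes that appear in the claim:
$$
E_{Total}(n) \;=\; -\frac{e^{2}}{8\pi \epsilon_{0} a_{0}\, n^{2}} \;=\; -\frac{m e^{4}}{8 h^{2} \epsilon_{0}^{2}\, n^{2}} \, .
$$

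Second, I would apply Bohr's third postulate $E_{Total}(n+1) - E_{Total}(n) = - h\nu(n,n+1)$. The right-hand side of this equation becomes, after substitution,
$$
-\,h\nu(n,n+1) \;=\; -\frac{e^{2}}{8\pi \epsilon_{0} a_{0}}\left\{\frac{1}{(n+1)^{2}} - \frac{1}{n^{2}}\right\} \;=\; \frac{e^{2}}{8\pi\epsilon_{0} a_{0}}\left\{\frac{1}{n^{2}} - \frac{1}{(n+1)^{2}}\right\},
$$
and the second equality of the claim follows by re-invoking $a_{0} = h^{2}\epsilon_{0}/(\pi m e^{2})$.

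There is essentially no obstacle: the argument is pure bookkeeping once Proposition 2.14 and the relation $E_{Total} = -E_{Kin}$ are in hand. The only delicate point worth flagging is the sign convention encoded in Bohr's third postulate as stated in the paper—one should read $h\nu(n,n+1)$ as the signed energy quantum associated to the transition between orbitals $n$ and $n+1$, in which case the displayed formula follows immediately; the usual spectroscopic statement (positive emission frequency upon de-excitation) is simply the absolute value of this quantity.
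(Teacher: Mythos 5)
Your proposal is correct and follows essentially the same route as the paper: both reduce the claim to the single-level energy $E_{Kin}(n)=\frac{m e^{4}}{8h^{2}\epsilon_{0}^{2}}\frac{1}{n^{2}}=\frac{e^{2}}{8\pi\epsilon_{0}a_{0}}\frac{1}{n^{2}}$ via the identities of the preceding Proposition and the relation $ma_{0}=h^{2}\epsilon_{0}/(\pi e^{2})$, then take the difference dictated by the third postulate. The only (immaterial) difference is that you enter the computation through $r(Z,n)$ and the Coulomb form of $E_{Kin}$, while the paper enters through $v(n)=\hat v/n$ and $\frac{1}{2}mv^{2}$; these are equivalent by Bohr's second postulate, and your explicit remark on the sign convention in the third postulate is a point the paper leaves implicit.
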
 
\begin{proof}

Using equations (\ref{20}), the definition of kinetic energy and the formula for $v(n) = c \beta(n) =  \hat{v}/n$,  
we obtain  
\begin{equation}\label{22}
E_{Kin}(n) = \frac{1}{2} m v^{2}(n) = \frac{1}{2}  \frac{m \hat{v}^{2}}{n^{2}} = \frac{m e^{4}}{8 h^{2} \epsilon_{0}^{2}} \; 
\frac{1}{n^{2}}.
\end{equation}
Now, we can replace $m$ by the Bohr radius $a_{0}$ where 
$m a_{0}  = \frac{h^{2} \epsilon_{0}}{\pi e^{2}}$, for obtaining from equation (\ref{22}) that
$$E_{Kin}(n)  = \frac{e^{2}}{8 \pi \epsilon_{0} a_{0}} \; \frac{1}{n^{2}}.$$ 
The second equality in equation (\ref{23}) follows from the last relation above.   
\end{proof}

\begin{corol}
\begin{enumerate}
\item \ The sum of the telescopic series obtained from equation (\ref{23}) establishes the energy of ionization 
for the hydrogen atom as being
\begin{equation}\label{25}
h\nu_{\infty} = \sum_{n = 1}^{\infty} h \nu(n,n+1) = \frac{e^{2}}{8 \pi \epsilon_{0} a_{0}} = \frac{m e^{4}}{8 h^{2} \epsilon_{0}^{2}}.
\end{equation} 

\item Using the relativistic relation and the Planck's relation, so that $E = m c^{2} = h \nu$,  and taking as definition 
$\Delta m c^{2} = h \nu_{\infty}$ and $h \nu_{e} = m c^{2}$, it follows  
\begin{equation}\label{26}
\Delta m = \frac{m}{2} \hat{\beta}^{2} \; \; \; \mbox{ and } \; \; \; \nu_{\infty} = \frac{\nu_{e}}{2} \hat{\beta}^{2} \; \; 
\mbox{ or } \; \; \lambda_{e} = \frac{\lambda_{\infty}}{2} \hat{\beta}^{2}.
\end{equation}
\item Using the relation for $ma_{0}$ in equation (\ref{20}), it follows that the relation between the Bohr radius 
of $H_{1}$, namely $a_{0}$, and the nucleus singularity of $H_{1}$, namely $r_0$, is given by  
$$r_{0} = \frac{e^{2}}{4 \pi \epsilon_{0} m c^{2}} = \frac{e^{2}}{4 \pi \epsilon_{0} c^{2}} \; 
\frac{\pi e^{2} a_{0}}{h^{2} \epsilon_{0}} = a_{0} \hat{\beta}^{2}.$$

\item For a $Z$-atom one has
\begin{equation}\label{28}
h \nu(Z;n,n+1) = \frac{m  Z^2 e^{4}}{8 h^{2} \epsilon_{0}^{2}} \left\{\frac{1}{n^{2}} - \frac{1}{(n+1)^{2}}\right\} = 
\frac{Z^2 e^{2}}{8 \pi \epsilon_{0} a_{0}} \left\{\frac{1}{n^{2}} - \frac{1}{(n+1)^{2}}\right\}.
\end{equation} 

\vspace{0.1cm}
\item The mass of one electron $m_{e}$ and the Bohr radius $a_{0}$ for the hydrogen atom 
can be determined by the knowledge of spectrum of $H_{1}$ using equation (\ref{23}). 

\vspace{0.1cm}
\item Moreover, using $Z = 1$ and the relation $\lambda = \displaystyle \frac{h}{mc}$, from equations 
(\ref{25}) and (\ref{26}), it follows that
\begin{equation}
\frac{hc}{\lambda_{\infty}} = \frac{e^{2}}{8 \pi \epsilon_{0} a_{0}} \; \Longleftrightarrow \; \frac{2 \pi a_{0}}{\lambda_{\infty}} = 
\frac{e^{2}}{4 h \epsilon_{0} c} = \frac{\hat{\beta}}{2}.
\end{equation}
Moreover, from equation (\ref{26}) for $\lambda_e$, it follows 
$ \lambda_{e} = 2 \pi a_{0} \hat{\beta}.$ In addition 
\begin{equation}
\hat{\beta} = \frac{2 \pi r_{0}}{\lambda_{e}} \; \; \; \; \mbox{ or } \; \; \; \; \frac{\lambda_{e}}{2 \pi r_{0}} = 137 = 
\frac{2 \pi a_{0}}{\lambda_{e}}.
\end{equation}
\end{enumerate}
\end{corol}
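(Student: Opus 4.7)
My plan is to dispatch each of the six items in turn, recognizing that only item (1) uses a genuine analytic argument (a telescoping series) and that every other item is a direct algebraic consequence of Proposition 2.13 together with the definitions (\ref{19})--(\ref{20}). For item (1), I would insert (\ref{23}) into the infinite sum and use the telescoping identity $\sum_{n=1}^{N}(1/n^{2}-1/(n+1)^{2}) = 1 - 1/(N+1)^{2} \to 1$ to read off both stated forms of $h\nu_{\infty}$ simultaneously. For item (2), I would divide $\Delta m c^{2} = m e^{4}/(8 h^{2}\epsilon_{0}^{2})$ by $mc^{2}$, so the right-hand side becomes $\tfrac{1}{2}\bigl(e^{2}/(2hc\epsilon_{0})\bigr)^{2} = \hat{\beta}^{2}/2$ by (\ref{20}); the frequency and wavelength versions then follow from the ratio of $\Delta mc^{2} = h\nu_{\infty}$ with $mc^{2} = h\nu_{e}$ together with the dispersion relation $\lambda\nu = c$. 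Item (3) is obtained by solving (\ref{20}) for $m$ and substituting into the definition of $r_{0}$; the residue collapses precisely to the Sommerfeld combination $a_{0}\hat{\beta}^{2}$.

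Item (4) is nothing more than the proof of Proposition 2.13 repeated verbatim with the $Z$-dependence retained through (\ref{19}): the kinetic energy in the $n$-orbital picks up a factor $Z^{2}$, and the same telescoping differences produce the formula. Item (5) is a remark to be explained rather than a formula to be checked: (\ref{23}) expresses the same measurable quantity in two different forms, one linear in $m$ and the other in $1/a_{0}$, so any two measured transition frequencies of $H_{1}$ determine both unknowns through the two displayed expressions. For item (6), I would start from item (1) in the form $hc/\lambda_{\infty} = e^{2}/(8\pi\epsilon_{0}a_{0})$ and rearrange to isolate $2\pi a_{0}/\lambda_{\infty} = e^{2}/(4hc\epsilon_{0}) = \hat{\beta}/2$. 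Combining this with the wavelength version of item (2), namely $\lambda_{e} = \hat{\beta}^{2}\lambda_{\infty}/2$, eliminates $\lambda_{\infty}$ and produces $\lambda_{e} = 2\pi a_{0}\hat{\beta}$; finally, item (3) substitutes $a_{0} = r_{0}/\hat{\beta}^{2}$ to give $\hat{\beta} = 2\pi r_{0}/\lambda_{e}$ together with its reciprocal $\lambda_{e}/(2\pi r_{0}) = 1/\hat{\beta} = 137 = 2\pi a_{0}/\lambda_{e}$.

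The only real obstacle is the algebraic bookkeeping of the constants $h$, $c$, $e$ and $\epsilon_{0}$, so that $\hat{\beta}$ emerges in each step as exactly the dimensionless combination $e^{2}/(2hc\epsilon_{0})$ fixed in (\ref{20}); no additional physical principle and no nontrivial mathematical ingredient is required beyond the elementary convergence of the telescoping series used for item (1).
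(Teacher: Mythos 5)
Your proposal is correct and follows essentially the same route as the paper: telescope the series from equation (\ref{23}) for item (1), and obtain the remaining items by direct algebraic substitution of the constants fixed in (\ref{19})--(\ref{20}). The only cosmetic difference is that the paper phrases the telescoping through $\beta(n)=\hat{\beta}/n$ and routes item (6) through $r_{0}$ rather than through $a_{0}$ and $\lambda_{\infty}$, which is equivalent given item (3).
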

\begin{proof}
From Corollary \ref{27} applied to the hydrogen atom we obtain $\beta(n) = \hat{\beta}\frac{1}{n}$ and so equation (\ref{28}) 
can be written as  
$$h \nu(n,n+1) = \frac{e^{2}}{8 \pi \epsilon_{0} a_{0}} \hat{\beta}^{-2}(\beta^{2}(n) - \beta^{2}(n+1)),$$ 
and then, from $r_{0} = a_{0} \hat{\beta}^{2}$ it follows that  
$$h \nu_{\infty} = \frac{e^{2}}{8 \pi \epsilon_{0} r_{0}} \sum_{n=1}^{\infty} (\beta^{2}(n) - \beta^{2}(n+1)) = 
\frac{e^{2}}{8 \pi \epsilon_{0} r_{0}} \hat{\beta}^{2}.$$ 
Now, since $h \nu = \frac{h c}{\lambda}$ we obtain
$$\frac{1}{\lambda_{\infty}} = \frac{e^{2}}{8 \pi \epsilon_{0} h c r_{0}} \hat{\beta}^{2} = \frac{1}{4 \pi r_{0}} \hat{\beta}^{3} \; \; \; \; 
\mbox{ or } \; \; \; \; r_{0} = \frac{1}{4 \pi} \lambda_{\infty} \hat{\beta}^{3} = \frac{\lambda_{e}}{2 \pi} \hat{\beta}.$$ 
\end{proof}

\section{The Four Dimensional Lorentz Manifolds $(M,\mathbf{g})$}

\vspace{0.3cm}
In this section we focus in studying the timelike and null geodesic lines of four dimensional Lorentz Manifolds $(M,\mathbf{g})$. 
Particularly, the called {\it the radial geodesic lines} and  {\it periodic geodesic lines} which we will see that are Kepler circular orbits.   

\begin{dfn}
For $(t,r,\varphi,\theta) \in M = \real \times ]1,+\infty[ \times \real^{2}$ let $(M,\mathbf{g})$ be the 
Lorentz manifold equipped with the semi-defined symmetric covariant second-order tensor given by the line 
element
$$ds^{2}(M) = -\frac{r - 1}{r} dt^{2} + \frac{r}{r - 1} dr^{2} + r^{2} d\varphi^{2} + r^{2} \sin^{2} \varphi \; d\theta^{2}.$$

A geodesic line in $M$ is an one-parameter curve $\alpha(s) = (t(s),r(s),\varphi(s),\theta(s)) \in M$,  $s \in I$, 
where $I \subset \real$ is an open interval, satisfying
\begin{equation}\label{29}
\alpha''(s) = 0 \; \; \mbox{ for each } \; \; s \in I,
\end{equation} 
where, here, $\alpha''$ means the second order covariant derivative associated to the Levi-Civitta connection of $M$. 

We say that $s \in I$ is a proper parameter for a curve $\alpha : I \to M$ if for each $s \in I$, 
$\lpr{\alpha'(s)}{\alpha'(s)}' = 0$ and $\lpr{\alpha'(s)}{\alpha'(s)} = a \neq 0$, where $a \in \real$. 

\vspace{0.2cm}
A null curve is a non-constant curve where $\lpr{\alpha'(s)}{\alpha'(s)} = 0$.   

\vspace{0.2cm}
The subset traced by a curve $\alpha : I \to M$ will be called of line or trajectory.  A re-parametrization 
of $\alpha(s)$ is another curve $\overline{\alpha}(\eta)$ with domain $J \subset \real$ such that 
$\alpha(I) = \overline{\alpha}$(J). In other words, these two curves trace equal trajectory in $M$. 
\end{dfn}

It notes that since the metric tensor of $M$ can be seen as a sum of two metric tensor, namely, 
$ds^{2}(M) = ds^{2}(P) + ds^{2}(\C)$ with
$$ds^{2}(P) = - h dt^{2} + \frac{1}{h} dr^{2} \; \; \mbox{ and } \; \; ds^{2}(\C) = r^{2}(d\varphi^{2} + \sin^{2} \varphi \; d\theta^{2}),$$ 
where $P$ denotes  de Schwarzschild half plane $\real \times ]0,+\infty[$ and $\C$ denotes the lightcone of the $\mink$, we can see our 
spacetime $(M,\mathbf{g})$ as a sub-manifold of the manifold product $P \times \C$. That is in fact a graph given by $r = \rho$ when, 
parametrically, we take the lightcone given by the map $\Gamma(\rho, \varphi, \theta)$. See subsection 2.2 and Proposition 2.3, 
replaced $r$ by $\rho$.

Therefore, we can see the curve $\alpha(s)$ as a ordered pair $\alpha(s) = (\beta(s),\gamma(s))$ 
where $\beta(s) = (t(s),r(s))$ is the component corresponding to $P$ and $\gamma(s) = \Gamma(r(s),\varphi(s),\theta(s))$ 
corresponding to $\C$.

 If we take coordinates $((t,r),(\rho, \varphi,\theta)) \in \real^{5}$ to represent points in $P \times \C$, the graph of the 
function $\rho = r$ is isometrically equivalent to $M$. Therefore, we can say that each curve in $M$ can be seen as a curve of type 
$$\alpha(s) = F(t(s),r(s),\varphi(s), \theta(s)) \; \; \mbox{ where } \; \; 
F(t,r,\varphi,\theta) = ((t,r),(r,\varphi,\theta)) \in P \times \C.$$

\begin{lemma}{(The proper parameter lemma)}

\begin{enumerate}

\item If $\alpha : I \to M$ is a geodesic line then $s \in I$ is a proper parameter. 

\vspace{0.1cm}
\item If $\overline{\alpha} : J \to M$ is a re-parametrization such that $\overline{\alpha}''(\eta) = 0$  
then the parameters $s, \eta$ are linking each other by an affine liner transformation $\eta = p s + q$, where $p \neq 0$ and 
$p,q \in \real$.
\end{enumerate}
\end{lemma}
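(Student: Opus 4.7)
The plan is to treat the two parts separately, exploiting metric compatibility of the Levi-Civita connection in (1) and the chain rule for covariant derivatives in (2). For part (1), I would define the squared-speed function $f(s) := \lpr{\alpha'(s)}{\alpha'(s)}$ and differentiate along $\alpha$, obtaining
\begin{equation*}
f'(s) \;=\; 2\,\lpr{\alpha''(s)}{\alpha'(s)},
\end{equation*}
which vanishes identically because $\alpha''(s) = 0$ by equation \eqref{29}. Hence $f$ is constant, giving $\lpr{\alpha'}{\alpha'}' = 0$. The remaining requirement $a \neq 0$ is not automatic from $\alpha'' = 0$ alone; it excludes the null case, and must be taken as part of the standing hypothesis that $\alpha$ is a (non-null) geodesic line, since null geodesics are handled by the separate null curve definition given just above.

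For part (2), the strategy is to use a smooth reparametrization function. Since $\overline{\alpha}$ and $\alpha$ trace the same subset of $M$, there exists a smooth diffeomorphism $s : J \to I$ with $\overline{\alpha}(\eta) = \alpha(s(\eta))$. Applying the chain rule to the covariant derivative along $\overline{\alpha}$, I would compute
\begin{equation*}
\overline{\alpha}'(\eta) \;=\; s'(\eta)\,\alpha'(s(\eta)), \qquad
\overline{\alpha}''(\eta) \;=\; s''(\eta)\,\alpha'(s(\eta)) + (s'(\eta))^{2}\,\alpha''(s(\eta)).
\end{equation*}
Using that both $\alpha$ and $\overline{\alpha}$ satisfy \eqref{29}, both $\alpha''$ and $\overline{\alpha}''$ vanish, so the identity collapses to $s''(\eta)\,\alpha'(s(\eta)) = 0$ for every $\eta \in J$.

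The main obstacle is upgrading this pointwise vector equation to the scalar conclusion $s''(\eta) = 0$; for that I need $\alpha'(s(\eta))$ to be nowhere zero. This is precisely where part (1) is invoked: by (1) we have $\lpr{\alpha'}{\alpha'} = a \neq 0$ throughout $I$, so $\alpha'$ is a non-vanishing vector field along the trajectory, forcing $s''(\eta) = 0$ identically on $J$. Integrating twice gives $s(\eta) = p_{1}\eta + q_{1}$ with $p_{1} \neq 0$ since $s$ is a diffeomorphism; solving for $\eta$ yields the desired affine relation $\eta = p s + q$ with $p = 1/p_{1}$ and $q = -q_{1}/p_{1}$, both real and $p \neq 0$, completing the argument.
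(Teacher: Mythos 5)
Your proposal is correct and follows essentially the same route as the paper: part (1) is the standard metric-compatibility argument (which the paper dismisses as ``obvious from definition''), and part (2) is the identical chain-rule computation, with your appeal to the non-vanishing of $\alpha'$ making precise what the paper attributes to ``regularity.'' Your observation that $a \neq 0$ in part (1) is not a consequence of $\alpha'' = 0$ but must be read as excluding the null case is a fair and worthwhile clarification of the statement.
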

\begin{proof}
Item (1) is obvious from definition. For item (2), one starts by taking $\overline{\alpha}(\eta) = \alpha(s(\eta))$ and thus 
from chain rule it follows that 
$$\frac{d^{2}\overline{\alpha}(\eta)}{d\eta^{2}} = \frac{d^{2} \alpha}{ds^{2}} \left(\frac{ds}{d\eta}\right)^{2} + 
\frac{d\alpha}{ds} \frac{d^{2}s}{d\eta^{2}}.$$ 
Now, from the regularity of these two parametric curves and from the assumption that they are geodesic lines it follows 
that $s''(\eta) = 0$, which implies $\eta = p s + q$ for $p \neq 0$ and 
$p,q \in \real$.
\end{proof}

\begin{corol} Let  $\alpha : I \to M$ be a geodesic line. It assumes that the system of ordinary differential equations for $\alpha$, 
given by (\ref{29}), contains the equation $ \frac{dt(s)}{ds} = \E \frac{r(s)}{r(s) - 1}.$ 
Then, taking $s'(\eta) = 1/\E$, one can re-parametrize it to obtain the equation 
$$\frac{dt(s)}{ds} \; \frac{ds}{d\eta} = \frac{dt(\eta)}{d\eta} = \frac{r(\eta)}{r(\eta) - 1}.$$ 
\end{corol}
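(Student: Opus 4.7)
The plan is to treat this as a direct application of the chain rule, once we justify that the proposed re-parametrization is admissible. The statement of the corollary invokes the re-parametrization $s'(\eta) = 1/\E$, which (integrating) gives $s(\eta) = \eta/\E + c$ for some constant $c$, i.e. an affine relation of exactly the form $\eta = p s + q$ with $p = \E \neq 0$ described in the proper parameter lemma. So the very first step is to observe that the proper parameter lemma, read in the opposite direction, guarantees that $\overline{\alpha}(\eta) := \alpha(s(\eta))$ is again a geodesic whenever $\E \neq 0$: affine re-parametrizations of geodesics are geodesics. This legitimizes treating $t$, $r$, $\varphi$, $\theta$ as functions of the new parameter $\eta$.

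Next I would simply apply the chain rule to $t$. Writing $t(\eta) = t(s(\eta))$ and differentiating,
\[
\frac{dt(\eta)}{d\eta} \;=\; \frac{dt(s)}{ds}\,\frac{ds}{d\eta}.
\]
Now I substitute the two ingredients supplied in the hypothesis: the given first integral
$\displaystyle \frac{dt(s)}{ds} = \E\,\frac{r(s)}{r(s)-1}$
coming from the geodesic system \eqref{29}, and the chosen scaling $\displaystyle \frac{ds}{d\eta} = \frac{1}{\E}$. Multiplying these two expressions, the factor $\E$ cancels, and since $r(s(\eta)) = r(\eta)$ under the re-parametrization, I obtain
\[
\frac{dt(\eta)}{d\eta} \;=\; \E\,\frac{r(s)}{r(s)-1}\cdot \frac{1}{\E} \;=\; \frac{r(\eta)}{r(\eta)-1},
\]
which is precisely the equation in the statement. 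The chain of equalities displayed in the corollary is then read off by collecting these two lines.

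There is essentially no obstacle here: the only subtleties to be pointed out are (i) the need for $\E \neq 0$ so that $s'(\eta) = 1/\E$ makes sense (if $\E = 0$ the original first integral would force $dt/ds = 0$, and the re-parametrization proposed here degenerates); and (ii) the appeal to the proper parameter lemma, which ensures that the re-parametrized curve remains a geodesic and hence continues to satisfy the rest of the system \eqref{29} in the new parameter. Beyond these bookkeeping remarks, the argument is a one-line chain rule computation.
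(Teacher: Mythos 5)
Your argument is correct and matches what the paper intends: the corollary is stated without an explicit proof, being treated as an immediate consequence of the proper parameter lemma (affine re-parametrizations preserve geodesics) combined with the chain rule, which is exactly your computation. Your added remarks on $\E \neq 0$ and on why the re-parametrized curve remains a geodesic are sound bookkeeping that the paper leaves implicit.
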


\begin{dfn}\label{41}
Let the effective potential for $M$ be the function $h(r) =  \displaystyle \frac{r - 1}{r}$  (such function is known as 
the Schwarzschild function). Then the parameter $\eta$ such that 
\begin{equation}
\frac{d\eta}{dt} = \frac{r - 1}{r},
\end{equation} 
 will be called the effective parameter for the geodesic lines in $M$. 

The function $\displaystyle \frac{1}{r}$, which is equal to $1 - h(r)$, will be called the 
Kepler-Newton gravitational potential. 
\end{dfn}

\begin{theor}\label{30}
 For $(M,\mathbf{g}),$ let $\eta$ be the effective parameter of a geodesic line $\alpha(\eta)$. 
Then the geodesic equation (\ref{29})  corresponds to the following ODE system: 
\begin{align}\label{31}
\frac{dt}{d \eta} = \frac{r}{r - 1}\\ 
r^{2} \sin^{2} \varphi \frac{d \theta}{d\eta} = L, \  L \in \mathbb R \ constant\\
\frac{d}{d \eta}\left(r^{2} \frac{d \varphi}{d \eta}\right) = r^{2}\sin \varphi \cos \varphi \left(\frac{d \theta}{d \eta}\right)^{2} \\
\frac{d^{2} r}{d \eta^{2}} + \frac{1}{2r(r - 1)} \left(1 - \left(\frac{d r}{d \eta}\right)^{2}\right) = 
(r -1) \left[ (\varphi')^{2} + (\theta')^{2} \sin^{2} \varphi \right]
\end{align}
\end{theor}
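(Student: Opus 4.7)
The plan is to derive all four equations via the Euler--Lagrange formalism applied to the quadratic energy functional $\mathcal{L}(\alpha,\alpha')=\tfrac12 g_{ij}\dot{x}^i\dot{x}^j$, whose extremals coincide with the affinely parametrized geodesics of $(M,\mathbf{g})$; by Lemma 3.2 it is legitimate to work in a proper parameter throughout. Since $t$ and $\theta$ are cyclic coordinates (the metric tensor depends only on $r$ and $\varphi$), one immediately obtains two first integrals. The $\theta$-integral is $r^2\sin^2\varphi\,\dot\theta=L\in\real$, which is equation (3.4) once the parameter is relabelled from $s$ to $\eta$ (the conservation law is invariant under affine reparametrization, absorbing the factor into the constant $L$). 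The $t$-integral reads $\frac{r-1}{r}\dot t=\E$ for some $\E\in\real$; applying Corollary 3.2 with the choice $s'(\eta)=1/\E$ rescales this integral into $dt/d\eta=r/(r-1)$, giving equation (3.3).

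Next I would treat the $\varphi$-equation by straightforward Euler--Lagrange: since $g_{\varphi\varphi}=r^2$ and only $g_{\theta\theta}=r^2\sin^2\varphi$ depends on $\varphi$, one has
\begin{equation*}
\frac{d}{d\eta}\!\left(r^2\frac{d\varphi}{d\eta}\right)=\frac{1}{2}\,\partial_\varphi(r^2\sin^2\varphi)\left(\frac{d\theta}{d\eta}\right)^{\!2}=r^2\sin\varphi\cos\varphi\left(\frac{d\theta}{d\eta}\right)^{\!2},
\end{equation*}
which is exactly equation (3.5). Note that all three of these equations are already formulated in the effective parameter $\eta$, since the only substitution made was a constant rescaling of $s$.

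The main work is the radial equation (3.6); this is where I expect the only genuine computation. The Euler--Lagrange equation for $r$ reads $\frac{d}{d\eta}(g_{rr}\dot r)=\frac12\sum_i(\partial_r g_{ii})(\dot x^i)^2$. Computing $\partial_r g_{tt}=-1/r^2$, $\partial_r g_{rr}=-1/(r-1)^2$, $\partial_r g_{\varphi\varphi}=2r$, and $\partial_r g_{\theta\theta}=2r\sin^2\varphi$, and then substituting the already-established relation $\dot t=r/(r-1)$ into the term $-\tfrac{1}{2r^2}\dot t^{\,2}=-\tfrac{1}{2(r-1)^2}$, the left-hand side expands to $-\dot r^2/(r-1)^2+\tfrac{r}{r-1}\ddot r$ and the $\dot t$- and $\dot r$-terms on the right combine into $(\dot r^2-1)/(2(r-1)^2)$. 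Multiplying through by $(r-1)/r$ produces
\begin{equation*}
\frac{d^2r}{d\eta^2}+\frac{1-(dr/d\eta)^2}{2r(r-1)}=(r-1)\!\left[(\varphi')^2+(\theta')^2\sin^2\varphi\right],
\end{equation*}
which is equation (3.6).

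The only delicate point is making sure the reparametrization from $s$ to $\eta$ is consistent across all four equations. This is clarified by Lemma 3.2(2): any two proper-parameter geodesic parametrizations differ by an affine transformation, so the $\theta$- and $\varphi$-conservation laws retain their form (only the values of the constants change), the cyclic $t$-integral gets fixed to $\E=1$ by the choice $s'(\eta)=1/\E$, and the second-order radial Euler--Lagrange equation transforms covariantly under this linear change of parameter. Once this consistency is checked, the four equations (3.3)--(3.6) are equivalent to the system $\alpha''(\eta)=0$, completing the proof.
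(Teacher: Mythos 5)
Your derivation is correct: the Euler--Lagrange equations of the energy functional $\tfrac12 g_{ij}\dot x^i\dot x^j$ are exactly the affinely parametrized geodesic equations, your partial derivatives of the metric coefficients are right, and the algebra leading to each of (20)--(23) checks out (in particular the radial computation, where substituting $\dot t=r/(r-1)$ and collecting the $\dot r^2$ term from the left with the $\dot t$- and $\dot r$-terms on the right correctly yields $(\dot r^2-1)/(2(r-1)^2)$ and hence equation (23)). Your handling of the reparametrization is also the same as the paper's: the effective parameter is precisely the affine parameter normalized so that the cyclic $t$-integral equals $1$. The route differs from the paper's mainly in organization and in one genuine respect. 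The paper derives (20), (23) and (21) by computing the Christoffel symbols $\Gamma^0_{ij}$, $\Gamma^1_{ij}$, $\Gamma^3_{ij}$ directly --- which is the same computation as your cyclic-coordinate first integrals and radial Euler--Lagrange equation, merely written in covariant-derivative form --- but it proves (22) by an entirely different, geometric argument: it identifies the spatial part of the geodesic with a curve $\hat\gamma(\eta)=r(\eta)N(\varphi,\theta)$ in $\euclidean$ and extracts (22) from the condition $\lpr{\hat\gamma''}{N_\varphi}=0$ (Lemma 3.14), a device the paper then reuses to set up the Cauchy problem for non-radial geodesics. Your uniform Lagrangian treatment is shorter and more self-contained for the purpose of proving the theorem itself; what it does not supply is the geometric interpretation of $\hat\gamma''$ as radial, which the paper needs downstream. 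One cosmetic point: the rescaling corollary you invoke is Corollary 3.3 in the paper's numbering, and the symbol $\E$ you use for the $t$-integral collides with the paper's later use of $\E(\alpha)$ for the energy $\lpr{\alpha'}{\alpha'}$; a different letter would avoid confusion.
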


\vspace{0.1cm}
 We will call Equation (23) the {\it linking equation} because it is associated to the graph of the function $\rho = r$. 
Equation (20) is the essentially the definition of the effective parameter $\eta$ that works as a time that \lq \lq transports" 
 the effective potential $h(r)$ along each one geodesic lines.  The equations (21) and (22) can be seen as the necessary 
and sufficient condition to obtain a parametric curve $\hat{\gamma}(\eta)$ in $\euclidean$ such that its acceleration 
vector $\hat{\gamma}''(\eta)$ is orthogonal to each spherical surface $S^{2}((0,0,0,0)) \subset \euclidean$ at the point 
$\hat{\gamma}(\eta)$.
Next, for the proof of Theorem \ref{30} we start proving equation (20). Equations (21), (22) and (23) will be proved through 
of the development of this subsection and in according our necessities. 

\begin{proof}  (Equation (20)).  It writes $(t(\eta),r(\eta), \varphi(\eta), \theta(\eta)) = (x^{0}(\eta), x^{1}(\eta), 
x^{2}(\eta), x^{3}(\eta))$ and it denotes the Christoffel symbols by $\Gamma_{ij}^{k}$ for $i,j,k = 0,1,2,3$.

For obtaining the equation (20), we take the second order covariant derivative for the $x^0$-component,  namely, 
\begin{equation}\label{32}
\frac{d^{2} x^{0}}{d \eta^{2}} + \sum_{i,j = 0}^{3} \Gamma_{ij}^{0} \frac{d x^{i}}{d \eta} \frac{d x^{j}}{d \eta} = 0 
\; \; \; \mbox{ where } \; \; \; \Gamma_{ij}^{k} = \frac{1}{2} \sum_{m = 0}^{3} g^{km}\left\{\frac{\partial g_{im}}{\partial x^{j}} + 
\frac{\partial g_{jm}}{\partial x^{i}} - \frac{\partial g_{ij}}{\partial x^{m}}\right\},
\end{equation}
for obtaining the Christoffel  symbols $\Gamma_{ij}^{0}$. 

In fact, since, $g_{00} = - h(r)$,  $g_{22} = r^{2}$,  $g_{33} = r^{2} \sin^{2} \varphi$ and $g_{11} = \displaystyle \frac{1}{h(r)}$, 
we obtain from equation (\ref{32}) that for the non-null Christoffel symbols, $t'' + 2\Gamma_{01}^{0} t' r' = 0$. 
This latter equality implies  $\Gamma_{01}^{0} = \frac{1}{2h} \; \frac{d h}{dr}$
and so  $(h(r(\eta)) t'(\eta))' = 0$. Finally, using the effective parameter we obtain $h(r(\eta)) t'(\eta) = 1$
which shows that equation (20) holds.  
\end{proof}

In what follows we study the geodesic equations in Theorem \ref{30} for a special kind of curves which we call 
{\it radial geodesic line}. We start with its definition.

\begin{dfn} Let  $\alpha: I \to M$ with $\alpha(\eta) = t(\eta), r(\eta), \varphi(\eta), \theta(\eta))$ be a curve a geodesic line, 
equipped with effective parameter $\eta \in I$. We say that 
$\alpha$ is a radial geodesic line if and only if \
$\varphi'(\eta) = 0 = \theta'(\eta)$, for each $\eta \in I$. 
\end{dfn}

We have the following proposition.
\begin{prop}\label{42}
A radial geodesic line equipped with effective parameter $\eta$ satisfies that
\begin{equation}\label{34}
\frac{d^{2} r}{d \eta^{2}} + \frac{1}{2r(r - 1)}\left(1 - \left(\frac{d r}{d \eta}\right)^{2}\right) = 0.
\end{equation}
\end{prop}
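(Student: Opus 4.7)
The plan is to specialize the $r$-component of the geodesic equation $\alpha''(\eta)=0$ under the radial hypothesis $\varphi'=\theta'=0$, using as little raw Christoffel bookkeeping as possible by exploiting the proper parameter lemma.

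First, I would invoke the proper parameter lemma: along any geodesic $\alpha$, the quantity $\langle\alpha',\alpha'\rangle$ is constant. Writing this out for a radial curve, where $\varphi'=\theta'=0$, the $d\varphi^{2}$ and $\sin^{2}\varphi\,d\theta^{2}$ pieces of the line element drop out, leaving the first-integral
$$-h(r)\bigl(t'\bigr)^{2}+\frac{1}{h(r)}\bigl(r'\bigr)^{2}=a,\qquad a\in\real,$$
where $h(r)=(r-1)/r$. Then from Theorem~\ref{30}, equation (20) (the only part of that theorem already proved in the excerpt), the effective parameter $\eta$ satisfies $t'=r/(r-1)=1/h(r)$. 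Substituting gives
$$-\frac{1}{h(r)}+\frac{(r')^{2}}{h(r)}=a,\qquad\text{i.e.,}\qquad (r')^{2}=1+a\,h(r).$$

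Next, I would differentiate this first integral with respect to $\eta$: $2r'r''=a\,h'(r)\,r'$. Assuming $r'\not\equiv 0$ (the constant-$r$ case being trivial and handled separately) I cancel $r'$ and solve $a=((r')^{2}-1)/h(r)$ from the first integral to obtain
$$r''=\frac{a\,h'(r)}{2}=\frac{h'(r)}{2h(r)}\bigl((r')^{2}-1\bigr).$$
Since $h(r)=1-1/r$ one has $h'(r)=1/r^{2}$ and therefore $h'(r)/h(r)=1/(r(r-1))$. Rearranging yields exactly
$$\frac{d^{2}r}{d\eta^{2}}+\frac{1}{2r(r-1)}\!\left(1-\Bigl(\frac{dr}{d\eta}\Bigr)^{2}\right)=0,$$
which is equation (\ref{34}).

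The main potential obstacle is conceptual rather than computational: one must be sure that the proper parameter property really holds for the effective parameter $\eta$. The proper parameter lemma establishes it for the intrinsic geodesic parameter $s$, and the Corollary after that lemma shows $\eta$ differs from $s$ by an affine reparametrization $\eta=ps+q$, under which $\langle\alpha',\alpha'\rangle$ only rescales by a constant and is therefore still preserved along the curve. A direct alternative route would be to compute $\Gamma^{1}_{00}=hh'/2$ and $\Gamma^{1}_{11}=-h'/(2h)$ from the diagonal metric, note that $\Gamma^{1}_{22}$ and $\Gamma^{1}_{33}$ drop out because $\varphi'=\theta'=0$, and plug $t'=1/h$ directly into $r''+\Gamma^{1}_{00}(t')^{2}+\Gamma^{1}_{11}(r')^{2}=0$; the same cancellation $h'/h=1/(r(r-1))$ then produces (\ref{34}). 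Both routes are short; the conservation route has the virtue of reusing a lemma the authors have already stated and makes the computation essentially a one-liner differentiation.
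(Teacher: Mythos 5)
Your argument is correct in substance but follows a genuinely different route from the paper's. The paper proves Proposition \ref{42} by the direct computation you relegate to your final sentences: it writes out the $r$-component of the geodesic equation, computes $\Gamma_{00}^{1} = \frac{r-1}{2r^{3}}$ and $\Gamma_{11}^{1} = \frac{-1}{2r(r-1)}$, notes $\Gamma_{01}^{1}=0$, substitutes $t' = r/(r-1)$, and reads off (\ref{34}). Your primary route instead differentiates the first integral $-h(t')^{2} + \frac{1}{h}(r')^{2} = a$; this is legitimate (and your remark that $\eta$, being an affine reparametrization of the proper parameter, still makes $\lpr{\alpha'}{\alpha'}$ constant is exactly the right point to check), and it is in fact the same mechanism the paper itself exploits just afterwards, in Corollary \ref{72}, where equation (37) is precisely your $(r')^{2} = 1 + a\,h(r)$ with $a = -b^{2}$. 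So your proof essentially runs the paper's Proposition and its Corollary in the reverse order. What the paper's route buys is that the Christoffel computation is reused verbatim in the later proof of equation (23); what yours buys is brevity and the avoidance of curvature bookkeeping.

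One point needs repair: your parenthetical that ``the constant-$r$ case being trivial and handled separately'' is not right as stated. If $r'\equiv 0$ on an interval, equation (\ref{34}) reduces to $\frac{1}{2r(r-1)}=0$, which is false for every $r>1$; the case is not one where the conclusion holds trivially, but one that must be shown to be \emph{empty}. The first integral alone cannot rule it out (any constant $r$ is consistent with $(r')^{2}=1+ah$ for a suitable $a$), so to exclude it you must return to the $r$-component of the geodesic equation and observe that $\Gamma_{00}^{1}(t')^{2} = \frac{1}{2r(r-1)}\neq 0$ forbids $r''\equiv 0$ with $r'\equiv 0$ --- i.e.\ exactly the computation your route was designed to avoid. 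This is a small and easily patched hole, but as written the conservation-law argument does not by itself cover all radial geodesics.
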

\begin{proof} In fact, taking the geodesic equation  (\ref{29}) with effective parameter $\eta$, we have for the function $r$ that 
$$r'' + \Gamma_{00}^{1} (t')^{2} + 2\Gamma_{01}^{1} t'r' + \Gamma_{11}^{1} (r')^{2} = 0.$$ 
Since $\Gamma_{01}^{1} = 0$ we obtain 
$$\Gamma_{00}^{1} = \frac{-1}{2g_{11}} \frac{\partial g_{00}}{\partial r} = \frac{r - 1}{2r^{3}} \; \; \; \mbox{ and } \; \; \; 
\Gamma_{11}^{1} = \frac{1}{2g_{11}} \frac{\partial g_{11}}{\partial r} = \frac{-1}{2r(r - 1)}.$$ 
Now, since $t' = \displaystyle \frac{r}{r -1}$ we obtain
$$r'' +  \frac{r - 1}{2r^{3}} \; \frac{r^{2}}{(r - 1)^{2}} - \frac{1}{2r(r - 1)} (r')^{2} = 0$$ 
and then, it follows the equation (\ref{34}). It notes that it corresponds to equation (23) for when
$\varphi'(\eta) = 0 = \theta'(\eta)$, for all $\eta \in I$.  
\end{proof}

\begin{corol}\label{72}
For a radial geodesic line $\alpha$ equipped with effective parameter $\eta$, it follows that

\begin{enumerate}
\item $\lpr{\alpha'}{\alpha'} = -h(t')^{2} + \frac{1}{h} (r')^{2} = - b^{2}$, with $b \in \mathbb R$. 

\vspace{0.1cm}
\item If $b = 0$ then $\alpha$ is a null radial geodesic. Moreover, one can take $r$ as a proper parameter, namely, 
$r = \eta + k_{2}$, \ $r = k_{1} \eta + k_{2}$, with $k_1, k_2$ being real constants, and so, the solutions of geodesic 
equation (which in this case, it reduces to equation (20)) are given by 
\begin{equation}\label{35}
t(r) = r_{0} + (r + \ln(r - 1)) \; \; \;  \mbox{ or } \; \; \; t(r) = r_{0} - (r + \ln(r - 1)),
\end{equation}
where $r_{0} \in \real$.

\vspace{0.1cm}
\item If $b^{2} \neq 0$  then $\alpha$ is a timelike radial geodesic. Moreover, a general solution of the geodesic equation 
(which in this case, it reduces to equation (23)) are such that 
\begin{equation}\label{36}
\eta(r) = \pm \int \sqrt{\frac{r}{(1 - b^{2})r + b^{2}}} \; dr.
\end{equation}    
\end{enumerate}
\end{corol}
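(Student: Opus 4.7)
The unifying observation is that for any geodesic $\alpha$, the scalar $\lpr{\alpha'(\eta)}{\alpha'(\eta)}$ is constant along the curve. This follows from the geodesic equation $\alpha''(\eta) = 0$ together with metric compatibility of the Levi-Civita connection, giving $\frac{d}{d\eta}\lpr{\alpha'}{\alpha'} = 2 \lpr{\alpha''}{\alpha'} = 0$. (The proper parameter lemma records precisely this for non-null geodesics; the same computation works when $\lpr{\alpha'}{\alpha'} = 0$.) The plan is to combine this constancy with the effective-parameter equation $t' = r/(r-1) = 1/h$ already proved in Theorem \ref{30} to reduce the whole system to a single first-order ODE for $r(\eta)$, from which each of (1), (2), (3) drops out by separation of variables.

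For item (1), I would expand $\lpr{\alpha'}{\alpha'}$ using the line element of $M$. Because $\alpha$ is radial, $\varphi'(\eta) = \theta'(\eta) = 0$, so the only surviving terms are $-h(t')^{2} + \frac{1}{h}(r')^{2}$. By the constancy observation this equals a real constant, which we write as $-b^{2}$ (the implicit restriction $\lpr{\alpha'}{\alpha'} \le 0$ keeps us in the causal regime, i.e.\ timelike or null). For item (2), substituting $t' = 1/h$ into the constraint with $b = 0$ yields $(r')^{2} = 1$, so $r(\eta) = \pm\eta + k_{2}$ is affine in $\eta$. This is what allows $r$ to be taken as an affine (proper) parameter along a null radial geodesic, with the general form $r = k_{1}\eta + k_{2}$ coming from the proper-parameter lemma's affine reparametrization freedom. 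To recover $t(r)$, apply the chain rule: $\frac{dt}{dr} = \frac{t'}{r'} = \pm\frac{1}{h} = \pm\frac{r}{r-1}$, and integrate using $\int \frac{r}{r-1}\, dr = \int \bigl(1 + \frac{1}{r-1}\bigr)\, dr = r + \ln(r-1) + C$, which produces the two signed solutions $t(r) = r_{0} \pm (r + \ln(r-1))$.

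For item (3), I would again substitute $t' = 1/h$ into $-h(t')^{2} + (r')^{2}/h = -b^{2}$ to obtain
$$(r')^{2} = 1 - b^{2} h = 1 - b^{2} \frac{r-1}{r} = \frac{(1 - b^{2})r + b^{2}}{r}.$$
Since $b^{2} \neq 0$ the curve is timelike, $r$ is no longer affine in $\eta$, and separating variables gives $d\eta = \pm \sqrt{r/((1 - b^{2})r + b^{2})}\, dr$, whose integration is exactly the stated formula for $\eta(r)$. The whole corollary is thus essentially algebraic once constancy of $\lpr{\alpha'}{\alpha'}$ is in hand; the only mildly subtle point — not really an obstacle — is the linguistic one in (2), where calling $r$ a \emph{proper} parameter stretches the paper's definition (which requires $\lpr{\alpha'}{\alpha'} \neq 0$) and should be read as affine parameter, which is exactly what $r = k_{1}\eta + k_{2}$ in fact provides.
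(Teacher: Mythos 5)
Your proposal is correct and follows essentially the same route as the paper: item (1) from constancy of $\lpr{\alpha'}{\alpha'}$ along a geodesic with the angular terms killed by radiality, then substitution of $t' = 1/h$ to get $(r')^{2} = 1 - b^{2} + b^{2}/r$, from which (2) gives $r' = \pm 1$ and $dt/dr = \pm(1 + \frac{1}{r-1})$, and (3) follows by separation of variables. The only divergence is cosmetic: the paper additionally remarks that the first-order relation in (3) must be checked against the second-order linking equation (23) with $L = 0$, and your observation that ``proper parameter'' in (2) should be read as affine parameter (since the paper's definition excludes null curves) is a fair and accurate gloss rather than a gap.
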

\begin{proof} Item (1) follows quickly since $\alpha$ is radial and geodesic.

Now, since $h = \displaystyle \frac{r-1}{r}$, from item (1) and from equation (\ref{31}) it follows that 
$\frac{-1}{h} + \frac{1}{h} (r')^{2} = -b^{2}$ 
which implies 
\begin{equation}\label{37}
\frac{dr}{d\eta} = \pm \sqrt{1 - b^{2} + \frac{b^{2}}{r}}.
\end{equation}

For item (2), it notes that if $b = 0$ then from equation above one gets $\frac{dr}{d\eta} = \pm 1$. 
Now, on the other hand, we have that  
$$\left <\alpha' (r), \alpha'(r) \right> = (\frac{d\eta}{dr})^2 \left <\alpha'(\eta), \alpha'(\eta) \right>.$$ 
Thus, because $\eta$ is proper parameter and $\frac{dr}{d\eta} = \pm 1$, one concludes that $r$ is also  proper parameter. 
Moreover, using again equation (\ref{31}) it follows that 
$$\frac{dt}{dr} = \pm \frac{r}{r - 1} =  \pm (1 + \frac{1}{r - 1}).$$ 
Thus by integration of the latter equation we have the solutions (\ref{35}). 

\vspace{0.1cm}
For item (3), if $b \neq 0$ the integral in (\ref{37}) is a solution of the timelike radial  geodesic equation 
if, and only if, the equation (23) holds 
for $L = 0$. That can be checked using equation (\ref{37}) and taking the derivative of (\ref{36}).
\end{proof}

\begin{corol}\label{71}
Assuming that the relativistic mechanic for the gravitation is asymptotically the Newtonian mechanic, one has that  
$\displaystyle lim_{r \rightarrow +\infty} \frac{dr}{d\eta} = 0$.  Then  $b^{2} = 1$ 
and the velocity in the $t$-time for any timelike radial geodesic is given by
\begin{equation}\label{38}
v(t) = \frac{dr}{dt} = \frac{dr}{d\eta} \; \frac{d\eta}{dt} = \pm h(r(t)) \sqrt{1 - h(r(t))}.
\end{equation}
\end{corol}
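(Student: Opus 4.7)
The plan is to deduce $b^{2} = 1$ from the asymptotic Newtonian hypothesis applied to equation (\ref{37}) in Corollary \ref{72}, and then combine this with the effective parameter equation (20) from Theorem \ref{30} via the chain rule.

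First, I would invoke the identity
$$\frac{dr}{d\eta} = \pm \sqrt{1 - b^{2} + \frac{b^{2}}{r}}$$
established in item (3) of Corollary \ref{72} for a timelike radial geodesic. Taking $r \to +\infty$ makes the term $b^{2}/r$ vanish, so the hypothesis $\lim_{r \to +\infty} dr/d\eta = 0$ forces $\sqrt{1 - b^{2}} = 0$, that is $b^{2} = 1$. This pins down the integration constant as the relativistic analogue of the Newtonian escape-velocity normalization.

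Next, I would substitute $b^{2} = 1$ back into the radicand to obtain $dr/d\eta = \pm \sqrt{1/r}$. Using the Schwarzschild function $h(r) = (r-1)/r$ introduced in Definition \ref{41}, one has the algebraic identity $1 - h(r) = 1/r$, so the previous equality rewrites as
$$\frac{dr}{d\eta} = \pm \sqrt{1 - h(r)}.$$

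Finally, equation (20) of Theorem \ref{30} gives $dt/d\eta = r/(r-1) = 1/h(r)$, and hence $d\eta/dt = h(r)$. Applying the chain rule yields
$$v(t) = \frac{dr}{dt} = \frac{dr}{d\eta}\,\frac{d\eta}{dt} = \pm h(r(t))\sqrt{1 - h(r(t))},$$
which is exactly the claimed formula. There is essentially no technical obstacle: the argument is a direct chain-rule computation once the constant $b$ is fixed, and the only conceptual point worth underlining is the interpretation of the asymptotic boundary condition as the precise mechanism that selects $b^{2}=1$ among the one-parameter family of timelike radial geodesics produced in Corollary \ref{72}.
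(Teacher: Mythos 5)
Your proposal is correct and follows essentially the same route as the paper: the limit $r\to+\infty$ applied to equation (\ref{37}) forces $b^{2}=1$, and then the formula for $v(t)$ follows from $1-h(r)=1/r$ together with $d\eta/dt=h(r)$ from equation (20). The paper's own proof is just a terser version of exactly this computation.
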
 

\begin{proof} $b^2 = 1$ follows directly from equation (\ref{37}).  Equation (\ref{38}) follows from 
$b^2 = 1$, $\frac{dr}{d \eta}= \pm \sqrt{ 1 - \frac{r-1}{r}}$  and $h(r(t)) = \frac{r-1}{r}$.
\end{proof}

Next we conclude that the $v(t)$-velocity function assumes a maximum value when $r = 3$.

\vspace{0.1cm}
\begin{corol}\label{08}
The function $\phi$ defined through of equation (\ref{38}), namely, 
$$2 \phi(t) = v^{2}(t) = h^{2}(r(t))  [1 - h(r(t))]$$ assumes a maximum value when $h(r) = 2/3$, it 
means when $r = 3$,  and the maximum value is $\phi_{I} = 2/27$. 
\end{corol}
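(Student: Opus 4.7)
The plan is to treat $2\phi(t)$ as a function of the single variable $h = h(r(t))$, which ranges in $(0,1)$ as $r$ varies in $(1,+\infty)$, and then apply elementary single-variable calculus. Since by Definition \ref{41} we have $h(r) = (r-1)/r$, the map $r \mapsto h(r)$ is a smooth strictly increasing bijection from $(1,+\infty)$ onto $(0,1)$; in particular a critical point of $2\phi$ in $r$ corresponds exactly to a critical point of the function $f(h) := h^{2}(1-h)$ in $h$, and the extremal values agree.

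First I would write
$$f(h) = h^{2} - h^{3}, \qquad f'(h) = 2h - 3h^{2} = h(2 - 3h), \qquad f''(h) = 2 - 6h.$$
The equation $f'(h) = 0$ on $(0,1)$ has the unique solution $h = 2/3$, and $f''(2/3) = -2 < 0$, so $h = 2/3$ gives a strict local maximum. Since $f(0)=f(1)=0$ and $f > 0$ on $(0,1)$, this local maximum is actually the global maximum of $f$ on $[0,1]$.

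Next I would translate back to the variable $r$: solving $(r-1)/r = 2/3$ gives $3(r-1) = 2r$, that is $r = 3$, which lies in the allowed domain $(1,+\infty)$ for $M$. Evaluating yields
$$f(2/3) = \left(\tfrac{2}{3}\right)^{2}\left(1 - \tfrac{2}{3}\right) = \frac{4}{9}\cdot\frac{1}{3} = \frac{4}{27},$$
so the maximum of $v^{2}(t) = 2\phi(t)$ equals $4/27$, and consequently $\phi_{I} = 2/27$.

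There is essentially no obstacle here: the only subtle point is making sure that the change of variables $r \leftrightarrow h$ is legitimate, which follows because $h'(r) = 1/r^{2} > 0$ on $(1,+\infty)$ and therefore critical points and their nature transfer faithfully between the two parametrizations. Everything else is a one-line calculus computation using equation (\ref{38}).
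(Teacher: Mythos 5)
Your proof is correct and complete: reducing to the single-variable function $f(h)=h^{2}(1-h)$ on $(0,1)$, locating the critical point $h=2/3$, checking it is a global maximum, and translating back to $r=3$ with $f(2/3)=4/27$ so that $\phi_{I}=2/27$ is exactly the computation the statement requires. The paper itself states this corollary without any proof, so there is nothing to compare against; your argument supplies the missing (elementary) justification, including the legitimate change of variables via the monotonicity of $r\mapsto h(r)$.
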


Next lemma follows from a simple integration.
\begin{lemma}\label{40}
In Newtonian mechanic the time $\hat{t}(r)$ associated to a radial motion with relative speed \ 
$\tilde{v} = \displaystyle \sqrt{\frac{2MG}{rc^{2}}}$ is given by the relation 
\begin{equation}\label{39}
\frac{dr}{d\hat{t}} = K \frac{1}{\sqrt{r}} \; \; \; \mbox{ where } \; \; \; K = \frac{2MG}{c^{2}}.
\end{equation}
Then, for $K = 1$ the time takes the form of 
$$\hat{t}(r) = \frac{2}{3}(r^{3/2} - r_{0}^{3/2})\; \; \mbox{ for initial condition } \; \; \hat{t}(r_{0}) = 0.$$
\end{lemma}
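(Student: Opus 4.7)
My plan is to split the proof into two independent pieces that mirror the structure of the statement: first, establishing the differential relation $dr/d\hat{t}=K/\sqrt{r}$ from the given Newtonian escape-type speed, and second, reducing the case $K=1$ to a one-line separable ODE.

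For the first step, I would start from the physical content: in a purely radial Newtonian motion, the scalar radial velocity is $dr/dt=v$, and the hypothesis identifies the dimensionless speed $\tilde v=v/c$ with $\sqrt{2MG/(rc^{2})}=\sqrt{K/r}$. To turn this into the form $dr/d\hat t=K/\sqrt{r}$ advertised in the lemma I need to be careful about units, and I would use exactly the rescaling already employed in Example 2.15, namely $\hat{t}=\tilde t/\sqrt{K}$ (equivalently, absorbing a factor of $\sqrt{K}$ into the time parameter). Under that change of parameter one has
\begin{equation*}
\frac{dr}{d\hat t}=\sqrt{K}\,\frac{dr}{d\tilde t}=\sqrt{K}\,\sqrt{K/r}=\frac{K}{\sqrt{r}},
\end{equation*}
which is the asserted relation. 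This is the only step where some care is needed, because the ``$K$ versus $\sqrt{K}$'' bookkeeping is easy to botch; I would present it as a short remark on the choice of normalization for $\hat{t}$ consistent with the coordinate transformation already done in Section 2.

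For the second step I simply integrate. Specializing to $K=1$, the equation becomes $dr/d\hat t=1/\sqrt{r}$, which is separable:
\begin{equation*}
\sqrt{r}\,dr=d\hat t.
\end{equation*}
Integrating from $r_{0}$ to $r$ on the left and from $0$ to $\hat t(r)$ on the right gives
\begin{equation*}
\int_{r_{0}}^{r}\sqrt{s}\,ds=\frac{2}{3}\bigl(r^{3/2}-r_{0}^{3/2}\bigr)=\hat t(r),
\end{equation*}
and the initial condition $\hat t(r_{0})=0$ is automatically satisfied by construction.

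The main (and essentially only) obstacle is the first step, namely justifying the unit/parameter rescaling cleanly so that the dimensionless speed $\tilde v=\sqrt{K/r}$ is transformed into the rate of change $K/\sqrt{r}$. Once that rescaling is made explicit, the second step is a textbook separation of variables and produces the stated formula immediately.
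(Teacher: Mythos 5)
Your integration step is exactly the paper's argument: the authors dispose of this lemma with the single remark that it ``follows from a simple integration,'' and your separation of variables $\sqrt{r}\,dr = d\hat t$ with the initial condition $\hat t(r_0)=0$ is precisely that integration. So for the part of the statement the paper actually proves, you take the same route.

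Your first step goes beyond the paper, which asserts the relation $dr/d\hat t = K/\sqrt{r}$ without justification, and you are right that some normalization is needed: taken literally, $\tilde v=\sqrt{K/r}$ gives $dr/dt=\sqrt{K}/\sqrt{r}$ (with $c=1$), not $K/\sqrt{r}$, so an extra factor of $\sqrt{K}$ must be absorbed into the time variable. One correction, though: the rescaling you need, $\hat t=\tilde t/\sqrt{K}$, is \emph{not} the one employed in Example 2.15, which sets $t=\frac{c^{2}}{2MG}\tilde t=\tilde t/K$ (and simultaneously $r=\tilde r/K$, so that the velocity is unchanged and the problem is reduced to $K=1$). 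Your bookkeeping is internally consistent and yields the advertised ODE, but you should present the $\sqrt{K}$ rescaling as your own normalization rather than attribute it to Section 2. In the only regime the paper ever uses (the dimensionless coordinates of Example 2.15, where effectively $K=1$), the discrepancy disappears and both readings give $dr/d\hat t=1/\sqrt{r}$, so nothing downstream is affected.
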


The following theorem establishes a relationship between the Schwarzschild time and the Newton mechanic time.
\begin{theor}\label{699}
In a radial geodesic $\alpha(\eta)$ the time of Schwarzschild $t$ and the time of Newton mechanic $\hat{t}$ are related 
by the equation
\begin{equation}\label{39}
\frac{d\hat{t}}{dt} = h(r) \sqrt{\frac{1 - b^{2} h(r)}{1 - h(r)}}.
\end{equation}

Moreover, if $b^{2} = 1$ (that means the timelike radial geodesic line is asymptotically Newtonian), the 
Newton-time $\hat{t}$ is a proper parameter for those geodesic. 
\end{theor}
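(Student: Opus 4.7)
The plan is to derive the formula by a three-step chain rule, treating $r$ and $\eta$ as intermediate variables, and then to plug in three parametric relations already established in this section. Concretely, I would write
$$\frac{d\hat{t}}{dt} \;=\; \frac{d\hat{t}}{dr}\cdot\frac{dr}{d\eta}\cdot\frac{d\eta}{dt},$$
and substitute, in order: from Lemma~\ref{40} (with $K=1$) the relation $d\hat{t}/dr = \sqrt{r}$; from equation~(\ref{37}) in Corollary~\ref{72} the relation $dr/d\eta = \pm\sqrt{1-b^{2}+b^{2}/r}$; and from Definition~\ref{41} of the effective parameter the relation $d\eta/dt = h(r)$. Multiplying the three factors would give the intermediate expression
$$\frac{d\hat{t}}{dt} \;=\; h(r)\,\sqrt{r(1-b^{2})+b^{2}} \;=\; h(r)\,\sqrt{r-b^{2}(r-1)}.$$

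The next step is a short algebraic identity that matches the radicand above with the one in the statement. Using $1-h(r) = 1/r$ and $1-b^{2}h(r) = (r-b^{2}(r-1))/r$, one has
$$\frac{1-b^{2}h(r)}{1-h(r)} \;=\; r - b^{2}(r-1),$$
which immediately rewrites $h(r)\sqrt{r-b^{2}(r-1)}$ as the compact form $h(r)\sqrt{(1-b^{2}h(r))/(1-h(r))}$ claimed in the theorem.

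For the second assertion, I would specialize $b^{2}=1$ in the intermediate step above. Then $dr/d\eta = \pm 1/\sqrt{r}$, and consequently
$$\frac{d\hat{t}}{d\eta} \;=\; \frac{d\hat{t}}{dr}\cdot\frac{dr}{d\eta} \;=\; \sqrt{r}\cdot\bigl(\pm 1/\sqrt{r}\bigr) \;=\; \pm 1.$$
Thus $\hat{t} = \pm\eta + C$ is an affine reparametrization of $\eta$. Since $\eta$ is already proper for $\alpha$ by Corollary~\ref{72}(1), and affine reparametrizations preserve a proper parameter (the inner product $\langle\alpha',\alpha'\rangle$ rescales by the constant factor $(d\eta/d\hat{t})^{2}=1$ and so remains equal to $-1$), the Newton time $\hat{t}$ is itself a proper parameter for any $b^{2}=1$ radial geodesic.

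The only step with any subtlety is the algebraic recasting in the first paragraph; everything else is chain rule and bookkeeping. I expect the main nuisance to be tracking the two $\pm$ branches consistently, corresponding respectively to outgoing and infalling radial geodesics — but this does not affect the final equality, which involves only the square of $dr/d\eta$.
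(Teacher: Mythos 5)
Your proposal is correct and follows essentially the same route as the paper: it combines the same three ingredients --- $dr/d\hat{t}=1/\sqrt{r}$ from Lemma~\ref{40}, $dr/d\eta$ from equation~(\ref{37}), and $d\eta/dt=h(r)$ --- and then uses $1-h=1/r$ to recast the radicand, which is exactly the paper's (much terser) argument. The second part likewise matches: the paper deduces $d\hat{t}=d\eta$ and you deduce $\hat{t}=\pm\eta+C$, both concluding via the fact that $\eta$ is proper.
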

\begin{proof}
Using the equation (\ref{37}), equation of Lemma \ref{40}, namely $\frac{dr}{d\hat{t}} = \frac{1}{\sqrt{r}}$, 
formula (\ref{39}) follows from the relation $r = \frac{1}{1 - h}$. 

If $b^{2} = 1$  it follows that $d\hat{t} = d\eta$. In fact, first from equation (\ref{39}) one has 
$\frac{d\hat{t}}{dt} = h(r)$ and, since $\frac{d \eta}{dt} = h(r)$, one arrives $d\hat{t}/dt = d\eta/dt$ and so 
$d\hat{t} = d\eta$. Thus, from Definition \ref{41}, it follows that $\hat{t}$ is a proper parameter for $\alpha$.
\end{proof}

In the next subsections of this section we retake the geodesic equation in Theorem \ref{30} and we prove formulas 
(21), (22) and (23).

\vspace{0.1cm}
\subsection{Proof of Equation (23)}
\begin{proof} 
It notes that a part of computations for proving Equation (23) was already made in the proof of Proposition \ref{42}. 
From equation of geodesic considering the $r$-variable one has that the geodesic equation reduces to 
$$r'' + \Gamma_{00}^{1} (t')^{2} + \Gamma_{11}^{1} (r')^{2} + \Gamma_{22}^{1} (\varphi')^{2} + \Gamma_{33}^{1} (\theta')^{2} = 0$$ 
since that the other Christoffel symbols vanish identically. From $g_{22} = r^{2}$ and $g_{33} = r^{2} \sin^{2} \varphi$ 
we have that   
$$\Gamma_{22}^{1} = -(r - 1) \; \; \; \mbox{ and } \; \; \; 
\Gamma_{33}^{1} = -(r - 1) \sin^{2} \varphi.$$
Then, since $\Gamma_{00}^{1} = \displaystyle \frac{r-1}{2r^3}$, $\Gamma_{11}^{1} = - \displaystyle \frac{1}{2r(r - 1)}$ 
and $\eta$ is a effective parameter it follows finally the equation (23). 
\end{proof}

\vspace{0.1cm}

\subsection{Proof of the Equation (21)}
\begin{proof} 
Equation (21) comes from considering the $\theta$-variable. That means the equation 
$\theta'' + \sum_{i,j = 0}^{3} \Gamma_{ij}^{3} \displaystyle \frac{dx^{i}}{d \eta} \frac{dx^{j}}{d \eta}= 0$. Next we 
need to compute the non-null symbols $\Gamma_{ij}^{3}$. Since $g_{22} = r^{2}$ and $g_{33} = r^{2} \sin^{2} \varphi$   and
$$\Gamma_{ij}^{3} = \frac{1}{2}  g^{33}\left\{\frac{\partial g_{i 3}}{\partial x^{j}} + 
\frac{\partial g_{j 3}}{\partial x^{i}} - \frac{\partial g_{ij}}{\partial x^{3}}\right\},$$ 
it follows that the only non-null Christoffel symbols are  $\Gamma_{13}^{3} = \frac{1}{r}$ and 
$\Gamma_{23}^{3} = \frac{\cos \varphi}{\sin \varphi}$.
For which the equation becomes to
$$(r^{2} \sin^{2} \varphi) \theta'' + (2r \sin^{2} \varphi) r' \theta' + (2 r^2 \sin \varphi \cos \varphi) \varphi' \theta' = 0.$$
That means $(r^{2} \sin^{2} \varphi \; \theta')' = 0$
then, Equation (21) holds for some constant $L$. 
\end{proof}

\vspace{0.1cm}
\subsection{Proof of the Equation (22)}  For proving Equation (22) we will consider next a new orthonormal frame 
$\F(M)$, taken on the fact that Equation (22) involves variations in the $\theta$ and $\varphi$-coordinates, those associated to spherical 
coordinates. 
\begin{dfn}
Let $\F(M) = \{\hat{\partial}_{t}, \hat{\partial}_{r}, \hat{\partial}_{\varphi}, \hat{\partial}_{\theta}\}$ be an 
adapted orthonormal frame to $M$, defined by the fields: 
$$\hat{\partial}_{t} = \frac{1}{\sqrt h} \partial_{t}, \; \; \; \hat{\partial}_{r} = \sqrt{h}\partial_{r}, \; \; \; 
\hat{\partial}_{\varphi} = \frac{1}{r} \partial_{\varphi}, \; \; \; 
\hat{\partial}_{\theta} =  \frac{1}{r \sin \varphi} \partial_{\theta}.$$  
\end{dfn}

\vspace{0.3cm}
Therefore, since $ds^{2}(M) = -h dt^{2} + \frac{1}{h} dr^{2} + r^{2} (d\varphi^{2} + \sin^{2} \varphi d\theta^{2})$, we can take the 
spherical coordinates for $\real^{3}$ as $\hat{\Gamma} : ]1, +\infty[ \times \real^{2} \to \real^{3}$ 
defined by  
$$\hat{\Gamma}(r,\varphi,\theta) = r N(\varphi,\theta) \; \; \mbox{ where } \; \; 
N(\varphi,\theta) = (\sin \varphi \cos \theta, \sin \varphi \sin \theta, \cos \varphi),$$  to 
 identify 
$$\hat{\partial}_{\varphi} \equiv N_{\varphi} \; \; \; \mbox{ and } \; \; \;
\hat{\partial}_{\theta} \equiv \frac{1}{\sin \varphi} N_{\theta}.$$  

\vspace{0.2cm}
Now, since the geodesic condition is equivalent to $\alpha''$ to be orthogonal to tangent vector, 
next we will take a model for $M$, denoted by $F(M)$, where the geodesic condition will be understood as 
$\alpha''(p) \in (T_{p}F(M))^{\perp}$. That means that we are taking a foliation of $\euclidean$ by spheres 
centered  in $(0,0,0,0)$ and radius $r(\eta)$.

\vspace{0.2cm}
In the proof of the following lemma we will prove Equation (22). In particular, in the process of the proof, Equation (21) 
will be again proved.

\begin{lemma}\label{45}
Let $\hat{\gamma}(\eta) = \hat{\Gamma}(r(\eta),\varphi(\eta),\theta(\eta))$ be a curve in $\real^{3}$ associated to a geodesic line in 
$M$. Writing $\hat{\gamma}(\eta) = r(\eta) N(\varphi(\eta),\theta(\eta))$ then the geodesic condition for $\hat \gamma : I \to F(M)$, 
implies that 
$$\hat{\gamma}''(\eta) = Ac(\eta) N(\varphi(\eta),\theta(\eta)).$$   
\end{lemma}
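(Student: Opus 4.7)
The plan is to compute $\hat{\gamma}'$ and $\hat{\gamma}''$ directly via the chain rule, decompose $\hat{\gamma}''$ in the orthogonal frame $\{N,N_{\varphi},N_{\theta}\}$ adapted to the radial foliation, and then read off that the tangential components (those along $N_{\varphi}$ and $N_{\theta}$) coincide exactly with the left-hand sides of geodesic equations (21) and (22). Since the geodesic hypothesis forces those two components to vanish, only the radial component along $N$ survives.

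First I would differentiate $\hat{\gamma}(\eta)=r(\eta)N(\varphi(\eta),\theta(\eta))$ to obtain
$$\hat{\gamma}'=r'\,N+r\bigl(\varphi'\,N_{\varphi}+\theta'\,N_{\theta}\bigr).$$
Next I would differentiate once more, keeping in mind the second partial derivatives of the unit radial field
$N(\varphi,\theta)=(\sin\varphi\cos\theta,\sin\varphi\sin\theta,\cos\varphi)$. A short calculation gives
$N_{\varphi\varphi}=-N$, $N_{\varphi\theta}=(\cos\varphi/\sin\varphi)\,N_{\theta}$, and
$N_{\theta\theta}=-\sin^{2}\varphi\,N-\sin\varphi\cos\varphi\,N_{\varphi}$. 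Substituting these into the expression for $\hat{\gamma}''$ and grouping the result in the frame $\{N,N_{\varphi},N_{\theta}\}$, I obtain
$$\hat{\gamma}''=\bigl[r''-r(\varphi')^{2}-r(\theta')^{2}\sin^{2}\varphi\bigr]N+C_{\varphi}\,N_{\varphi}+C_{\theta}\,N_{\theta},$$
with
$$C_{\varphi}=2r'\varphi'+r\varphi''-r(\theta')^{2}\sin\varphi\cos\varphi,\qquad C_{\theta}=2r'\theta'+r\theta''+2r\varphi'\theta'\frac{\cos\varphi}{\sin\varphi}.$$

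Then I would observe that multiplying $C_{\varphi}$ by $r$ yields $\bigl(r^{2}\varphi'\bigr)'-r^{2}\sin\varphi\cos\varphi\,(\theta')^{2}$, which vanishes precisely by Equation (22); similarly, multiplying $C_{\theta}$ by $r\sin^{2}\varphi$ gives $\bigl(r^{2}\sin^{2}\varphi\,\theta'\bigr)'$, whose vanishing is Equation (21). Hence, under the geodesic hypothesis on $\alpha$ (which has been shown earlier to entail equations (21) and (22)), both tangential coefficients are zero, so $\hat{\gamma}''$ is a scalar multiple of $N$, with $Ac(\eta)=r''-r(\varphi')^{2}-r(\theta')^{2}\sin^{2}\varphi$.

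The only slightly delicate step is the bookkeeping of the second-order partials of $N$ in a frame that is orthogonal but not orthonormal (since $|N_{\theta}|=\sin\varphi$); once those derivatives are expanded in the basis $\{N,N_{\varphi},N_{\theta}\}$ correctly, the identification of $C_{\varphi}$ and $C_{\theta}$ with equations (22) and (21) is algebraically transparent. In particular this derivation simultaneously supplies the as-yet-unproved equations (21) and (22) of Theorem \ref{30}, as promised in the discussion preceding the lemma.
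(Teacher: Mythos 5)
Your proof is correct and follows essentially the same route as the paper's: both decompose $\hat{\gamma}''$ in the frame $\{N, N_{\varphi}, N_{\theta}\}$ and identify the two tangential components with the left-hand sides of equations (22) and (21); the paper reaches those components by differentiating the first-order inner products $\lpr{\hat{\gamma}'}{N_{\varphi}} = r\varphi'$ and $\lpr{\hat{\gamma}'}{N_{\theta}} = r\theta'\sin^{2}\varphi$ instead of expanding the second partials of $N$, but the algebra is identical and your formulas for $C_{\varphi}$ and $C_{\theta}$ check out. One caution on logical order: equation (22) is \emph{not} proved earlier in the paper (its proof is this very lemma), so you should not cite it as already established; rather, as your closing sentence and the paper both indicate, derive the vanishing of $C_{\varphi}$ and $C_{\theta}$ from the geodesic condition $\hat{\gamma}''(p)\in (T_{p}F(M))^{\perp}$ (normality of the acceleration to the foliating spheres) and then obtain (21) and (22) as consequences of your identification.
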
 

\begin{proof}

In order to do that we start
observing that  since $$\hat{\gamma}' = r' N + r N_{\varphi} \varphi' + r N_{\theta} \theta'$$ it follows that   
$\lpr{\hat{\gamma}'}{\hat{\gamma}'} = (r')^{2} + r^{2} \left[(\varphi')^{2} + (\theta')^{2} \sin^{2} \varphi  \right]$. 
Therefore, 

\begin{enumerate}
\item From $\lpr{\hat{\gamma}'}{N} = r'$ it follows that 
$r'' = \lpr{\hat{\gamma}''}{N} + \lpr{\hat{\gamma}'}{N_{\varphi}} \varphi' + \lpr{\hat{\gamma}'}{N_{\theta}} \theta'$ then, 
$$\lpr{\hat{\gamma}''}{N} = r'' - r[(\varphi')^{2}+ (\theta')^{2} \sin^{2} \varphi].$$ 

\item From $\lpr{\hat{\gamma}'}{N_{\varphi}} = r \varphi'$ it follows that 
$$\lpr{\hat{\gamma}''}{N_{\varphi}} + \lpr{\hat{\gamma}'}{N_{\varphi \varphi} \varphi'} + 
\lpr{\hat{\gamma}'}{N_{\varphi \theta} \theta'} = r' \varphi' + r \varphi''.$$
Now, because $N_{\varphi \varphi} = - N$ implies that $\lpr{\hat{\gamma}'}{N_{\varphi \varphi} \varphi'} = -r'\varphi'$, and since 
$N_{\varphi \theta} = \cos \varphi (-\sin \theta, \cos \theta, 0)$, we obtain that $\lpr{\hat{\gamma}''}{N_{\varphi}} = 0$ if 
and only if
$$r \varphi'' + 2 r' \varphi' = r \; \theta'^{2} \sin \varphi \; \cos \varphi.$$ 
The latter equality implies  $(r^{2} \varphi')' =  r^2  \; \theta'^{2} \sin \varphi  \cos \varphi$, 
that is the equation (22).

\vspace{0.2cm}
\item In addition, since $\lpr{\hat{\gamma}'}{N_{\theta}} = r \theta' \sin^2 \varphi$ and 
$N_{\theta \theta} = - \sin \varphi ( \cos \theta,  \sin \theta, 0)$  we have that $\lpr{\hat{\gamma}''}{N_{\theta}} = 0$ 
if, and only if, $(r^2 \theta' \sin^2 \varphi)' = 0$. That means equation (21) holds if, and only if, 
$\lpr{\hat{\gamma}''}{N_{\theta}} = 0$. But, it is well known that each geodesic curve of a submanifold has acceleration vector field 
orthogonal to its correspondent tangent bundle. Therefore, the equation (21) holds. 
\end{enumerate} 
\end{proof}

After Lemma \ref{45} we can say that we have finished the proof of Theorem \ref{30}. 

Furthermore, Lemma \ref{45} also allows us to 
conclude the following result (that is closed related with our liftings to define the generalized Lorentz group) which shows 
that the effective parameter $\eta$ is necessary to obtain that the lifting of a curve of $\euclidean$ is a geodesic line in $M$. 
\begin{theor}
Let $\hat{\gamma}(\eta) = \hat{\Gamma}((r(\eta),\varphi(\eta),\theta(\eta))$ be a curve in $\euclidean$ associated to 
a geodesic line in $M$. Then 
\begin{equation}\label{60}
\hat{\gamma}''(\eta) = Ac(\eta) N(\varphi(\eta),\theta(\eta)) = 
\frac{1}{r} [r'' - r((\varphi')^{2} + (\theta')^{2} \sin^{2} \varphi)] \hat{\gamma}(\eta).
\end{equation}
\end{theor}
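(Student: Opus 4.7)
The statement contains two equalities, and I would treat them separately. The first equality $\hat{\gamma}''(\eta) = Ac(\eta)\, N(\varphi(\eta),\theta(\eta))$ has already been proved as the conclusion of Lemma \ref{45}: the acceleration of the spherically-coordinated curve in $\euclidean$ is radial whenever the underlying curve solves the geodesic equations (21) and (22). So no additional argument is required for it; the only remaining task is to identify the scalar $Ac(\eta)$ and rewrite the result as a multiple of the position vector $\hat\gamma(\eta) = r(\eta)\,N(\varphi(\eta),\theta(\eta))$.

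My plan for extracting $Ac(\eta)$ is to exploit that $N$ is a unit vector, so
$$Ac(\eta) = \lpr{\hat{\gamma}''(\eta)}{N(\varphi(\eta),\theta(\eta))}.$$
This Euclidean pairing was already computed inside the proof of Lemma \ref{45}: starting from $\hat\gamma' = r'\,N + r\varphi'\,N_\varphi + r\theta'\,N_\theta$, differentiating $\lpr{\hat\gamma'}{N} = r'$ once and using $\lpr{\hat\gamma'}{N_\varphi} = r\varphi'$ together with $\lpr{\hat\gamma'}{N_\theta} = r\theta'\sin^{2}\varphi$, one obtains
$$\lpr{\hat{\gamma}''}{N} \;=\; r'' - r\bigl[(\varphi')^{2} + (\theta')^{2}\sin^{2}\varphi\bigr].$$
Alternatively the same identity can be read off directly by expanding $\hat\gamma''$ and using the spherical-frame relations $\lpr{N}{N_\varphi}=\lpr{N}{N_\theta}=\lpr{N}{N_{\varphi\theta}} = 0$, $\lpr{N}{N_{\varphi\varphi}}=-1$ and $\lpr{N}{N_{\theta\theta}} = -\sin^{2}\varphi$. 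Either way, this fixes the scalar coefficient:
$$Ac(\eta) = r'' - r\bigl[(\varphi')^{2} + (\theta')^{2}\sin^{2}\varphi\bigr].$$

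The second equality is then a cosmetic rearrangement: since $\hat{\gamma}(\eta) = r(\eta)\,N(\varphi(\eta),\theta(\eta))$, we may write $N = \hat{\gamma}/r$ and substitute into $\hat{\gamma}''(\eta) = Ac(\eta)\,N$ to get
$$\hat{\gamma}''(\eta) = \frac{1}{r}\bigl[\,r'' - r\bigl((\varphi')^{2} + (\theta')^{2}\sin^{2}\varphi\bigr)\bigr]\,\hat{\gamma}(\eta),$$
which is exactly the displayed formula (\ref{60}). There is no genuine obstacle here; the theorem is essentially a reformulation of Lemma \ref{45}, making explicit that along the radial ray through $\hat\gamma(\eta)$ the projected curve in $\euclidean$ is centrally accelerated, and with the scalar coefficient being the Euclidean radial acceleration $r'' - r[(\varphi')^{2}+(\theta')^{2}\sin^{2}\varphi]$ already familiar from central-force mechanics. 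The only subtle point to keep in mind is that the identification $N = \hat\gamma/r$ and the use of the geodesic equations (21)--(23) both require the effective parameter $\eta$ to be in force; it is this choice of parameter that lets Lemma \ref{45} apply and makes the right-hand side of (\ref{60}) purely radial.
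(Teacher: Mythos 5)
Your proposal is correct and follows essentially the same route as the paper: the paper's own proof likewise invokes Lemma \ref{45} for the radiality of $\hat{\gamma}''$, identifies the scalar via $\lpr{\hat{\gamma}''}{N} = r'' - r[(\varphi')^{2} + (\theta')^{2}\sin^{2}\varphi]$ (computed in item (1) of that lemma's proof), and substitutes $N = \frac{1}{r}\hat{\gamma}(\eta)$. Your added remark that the effective parameter $\eta$ is what makes Lemma \ref{45} applicable is a sensible clarification but not a departure from the paper's argument.
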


\begin{proof} 
The proof follows from Lemma \ref{45}, from 
$N= \displaystyle \frac{1}{r} \hat{\Gamma}(r(\eta),\varphi(\eta), \theta(\eta))= \frac{1}{r} \hat{\gamma}(\eta)$ 
and from of the fact that 
$\left <\gamma'', N \right> = r'' - r[(\varphi')^{2} + (\theta')^{2} \; \sin^{2} \varphi]$. 
\end{proof}

\vspace{0.1cm}
Continuing with our study of the geodesic lines on $M$ and still using the model $F(M)$, 
we next consider the case when the geodesic is non-radial with effective parameter.  That is made in the following subsection.

\subsection{A Cauchy problem associated to non-radial geodesic lines in $M$} We start remembering that if a point 
$p \in M$ and a non-radial velocity vector $V(p) \in T_{p}M$ are given, we can find a 
geodesic line $\alpha(\eta) = (\beta(\eta), \gamma(\eta)) \in P \times \C$ for the conditions $\alpha(0) = p$ and 
$\alpha'(0) = V(\alpha (0))$, by solving the system of geodesic equations. 
Therefore, taking the projection into $\euclidean$ and writing  $p = (t_{0},r_{0},\varphi_{0},\theta_{0})$ and 
$V = V^{0} \hat{\partial}_{t}(p) + V^{1} \hat{\partial}_{r}(p) + 
V^{2} \hat{\partial}_{\varphi}(p) + V^{3} \hat{\partial}_{\theta}(p)$, one obtains a Cauchy problem restricted to 
$\gamma(\eta)$, namely, the spatial part of geodesic equation, since by equations (21) and (22) we will have that $\hat{\gamma}''(p)$ 
needs to be parallel to position vector $\hat{\gamma}(p)$. Then we have the equations:

\begin{equation}\label{51}
\hat{\gamma}'(\eta) = r' N(\eta) + r \varphi' N_{\varphi} + r \theta' N_{\theta} \; \;  
\mbox{ with } \; \; \hat{p} = \hat{\gamma}(0)  \; \; \mbox{ and } \; \;  \hat{\gamma}'(0) = \vec{v}.
\end{equation} 

\vspace{0.1cm}
Since $\hat{\gamma}'(0) = v_{0}$ is non-radial, the vector subspace $W = \mbox{Span}[\hat{\gamma}(0),\hat{\gamma}'(0)] 
\subset \euclidean$ contains the trajectory traced by this part of the geodesic. 
The radial projection of $\gamma(\eta)$ into the unit sphere $S^{2} = \{w \in \euclidean: \lpr{w}{w} = 1\}$ traces an arc of a 
great circle in this sphere. Now, by a linear isometry, more explicitly a spherical rotation, we can assume a coordinates system 
where $\varphi = \pi/2$ and so in these new coordinates the geodesic equation reduces to equations (20), (21) and (23), 
with an unique constant $L$, that is given by the initial conditions.

\begin{prop}\label{12}
In rectangular coordinates $\hat{\gamma} = (x,y,z)$ for $\euclidean$ the equation (\ref{60}) can be written as follows 
\begin{equation}\label{62}
\hat{\gamma}''(\eta) - ac(\eta) \hat{\gamma}(\eta) = 0, \; \mbox{ where} \; \; 
\hat{\gamma}(0) = \hat{\gamma}_{0} \; \mbox{ and } \; \hat{\gamma}'(0) = {v}_{0} \; \mbox{ are given.}
\end{equation}

By the Theorem of Existence and Unicity of solutions for the ordinary differential equation one has for each initial 
condition an unique solution of the Cauchy problem given above. 
\end{prop}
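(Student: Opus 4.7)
The plan is to execute two reductions in sequence. First, rewrite (\ref{60}) in Cartesian form by using the identity $N(\varphi,\theta) = \hat{\gamma}/r$, so that
$$Ac(\eta)\,N(\varphi(\eta),\theta(\eta)) \;=\; \frac{Ac(\eta)}{r(\eta)}\,\hat{\gamma}(\eta).$$
Setting $ac(\eta) := Ac(\eta)/r(\eta)$ transforms (\ref{60}) directly into $\hat{\gamma}''(\eta) - ac(\eta)\hat{\gamma}(\eta) = 0$, which is exactly (\ref{62}).

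Second, recast (\ref{62}) as a first-order ODE system on the phase space $\real^{3}\times\real^{3}$ via $w = \hat{\gamma}'$, obtaining
$$(\hat{\gamma},w)'(\eta) \;=\; F(\hat{\gamma},w), \qquad F(\hat{\gamma},w) := \bigl(w,\;ac(\hat{\gamma},w)\,\hat{\gamma}\bigr),$$
restricted to the open phase domain $U = \{(\hat{\gamma},w) \in \real^{3}\times\real^{3} : \|\hat{\gamma}\|>1\}$ dictated by the base manifold $M$. Once the vector field $F$ is shown to be locally Lipschitz on $U$, the classical Picard--Lindel\"of theorem produces a unique maximal $C^{2}$ solution through any initial datum $(\hat{\gamma}_{0},v_{0}) \in U$, which is precisely the assertion of the proposition.

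The hard step, and where the real calculation lies, is exhibiting $ac$ as an explicit smooth function of the phase variables $(\hat{\gamma},w)$ rather than as an abstract function of $\eta$. From Lemma \ref{45} one reads off $r = \|\hat{\gamma}\|$, $r' = \lpr{\hat{\gamma}}{\hat{\gamma}'}/r$, and $\|\hat{\gamma}'\|^{2}-(r')^{2} = r^{2}\bigl((\varphi')^{2}+(\theta')^{2}\sin^{2}\varphi\bigr)$. Substituting equation (23) of Theorem \ref{30} to eliminate the second derivative $r''$ hidden inside $Ac$ yields
$$ac(\hat{\gamma},\hat{\gamma}') \;=\; -\frac{\|\hat{\gamma}'\|^{2}-(r')^{2}}{r^{3}} \;-\; \frac{1-(r')^{2}}{2\,r^{2}(r-1)},$$
which is manifestly of class $C^{\infty}$ on $U$ since $r > 1$ there. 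Consequently $F$ is locally Lipschitz on $U$, Picard--Lindel\"of applies, and the Cauchy problem associated to (\ref{62}) admits the unique solution claimed.
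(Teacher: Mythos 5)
Your proposal is correct, but it does noticeably more than the paper, which supplies no proof at all for this proposition: the statement is presented there as an immediate consequence of equation (\ref{60}) via the substitution $N = \hat{\gamma}/r$ (so $ac(\eta) = Ac(\eta)/r(\eta)$, exactly your first reduction), followed by a bare citation of the existence--uniqueness theorem; and the subsequent corollary makes clear the authors are reading (\ref{62}) as a \emph{linear} second-order system whose coefficient $ac(\eta)$ is already known along a given geodesic, for which existence and uniqueness are standard. Your second step takes a genuinely different and more self-contained route: by using equation (23) of Theorem \ref{30} to eliminate $r''$ and the identities $r = \|\hat{\gamma}\|$, $r' = \lpr{\hat{\gamma}}{\hat{\gamma}'}/r$, $\|\hat{\gamma}'\|^{2}-(r')^{2} = r^{2}\bigl((\varphi')^{2}+(\theta')^{2}\sin^{2}\varphi\bigr)$, you exhibit $ac$ as an explicit smooth function of the phase variables on $\{r>1\}$ (your formula checks out: $ac = -Q/r - (1-(r')^{2})/(2r^{2}(r-1))$ with $Q = (\|\hat{\gamma}'\|^{2}-(r')^{2})/r^{2}$), so Picard--Lindel\"of applies to the autonomous first-order system and yields existence and uniqueness of the spatial geodesic itself, not merely of solutions of a linear equation with a pre-given coefficient. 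The only caution is that your formula for $ac(\hat{\gamma},\hat{\gamma}')$ is valid precisely because the curve is assumed to satisfy the linking equation (23); this is consistent with the hypotheses, but it means your autonomous system characterizes spatial projections of geodesics rather than arbitrary solutions of (\ref{62}) with an externally prescribed $ac(\eta)$ --- a distinction worth stating explicitly, since the paper's Corollary 3.17 exploits the linear reading to conclude that $\{x,y,z\}$ is linearly dependent.
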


Next we observe that since a general solution of (\ref{62}) depends of two parameters $k_{1}, k_{2} \in \real$ and 
two functions which are solutions of the equation $X''(\eta) - ac(\eta) X(\eta) = 0,$ it follows that if 
$X = (x,y,z)$ then the set of solutions $\{x(\eta),y(\eta),z(\eta)\}$ form a linearly dependent set over $\real$. 
Then we have the following consequence:

\begin{corol} 
Each non-radial parametric geodesic line $\alpha(\eta)$ in $M$ equipped with effective parameter $\eta$, is an extension of a plane 
curve $\hat{\gamma}(\eta)$ in $\euclidean$ adapted to the initial conditions. That means, an extension of a curve $\hat{\gamma}(\eta)$, which  in 
rectangular coordinates $\hat{\gamma} = (x,y,z)$ is such that
\begin{equation}
 \left\{\begin{matrix}
x''(\eta) + \lambda(\eta) x = 0 \; \; \mbox{ where } \; \; x(0) = x_{0} \; \; \mbox{ and } \; \; x'(0) = {v}_{1} \\
y''(\eta) + \lambda(\eta) y = 0 \; \; \mbox{ where } \; \; y(0) = y_{0} \; \; \mbox{ and } \; \; y'(0) = {v}_{2} \\ 
\ z''(\eta) + \lambda(\eta) z = 0 \; \; \mbox{ where } \; \; z(0) = 0 \; \; \mbox{ and } \; \; z'(0) = 0, 
\end{matrix}\right.
\end{equation} 
where $\lambda(\eta) = - ac(\eta)$. 

With these coordinates $\cos \varphi = 0$, and so we say that the geodesic line 
$\alpha(\eta)$ is an {\it equatorial geodesic}.   
\end{corol}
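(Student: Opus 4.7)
The plan is to exploit the linearity of the vector equation furnished by Proposition \ref{12}, namely $\hat{\gamma}''(\eta) - ac(\eta)\hat{\gamma}(\eta) = 0$, which in the ambient rectangular coordinates $(x,y,z)$ for $\euclidean$ decouples into three scalar second-order linear homogeneous ODEs with the identical coefficient function $\lambda(\eta) = -ac(\eta)$, one for each component. The key structural fact is that the solution space of a scalar second-order linear ODE is two-dimensional over $\real$, so any three of its solutions are necessarily linearly dependent.

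First I would apply this observation to the components $x(\eta), y(\eta), z(\eta)$ of $\hat{\gamma}$. Linear dependence produces constants $A,B,C \in \real$, not all zero, with $A\,x(\eta) + B\,y(\eta) + C\,z(\eta) = 0$ for every $\eta$, so the trajectory of $\hat{\gamma}$ lies in a plane through the origin of $\euclidean$. Both initial vectors $\hat{\gamma}(0) = \hat{p}$ and $\hat{\gamma}'(0) = v_{0}$ belong to this plane and, by the non-radial hypothesis recalled just before Proposition \ref{12}, they are linearly independent and span the two-plane $W = \mbox{Span}[\hat{\gamma}(0),\hat{\gamma}'(0)]$. Consequently the plane containing the curve must coincide with $W$.

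Next I would perform a linear isometry of $\euclidean$ — a spherical rotation fixing the origin — carrying $W$ onto the canonical $xy$-plane. In the rotated rectangular coordinates, still denoted $(x,y,z)$, the curve satisfies $z(\eta) \equiv 0$, whence $z(0) = 0$ and $z'(0) = 0$, while each component continues to solve $X''(\eta) + \lambda(\eta)X(\eta) = 0$ with $\lambda = -ac$. This yields exactly the Cauchy system displayed in the statement. Finally, translating the plane equation $z = 0$ back into the spherical frame $\hat{\Gamma}$ gives $\cos\varphi = 0$, which is by definition the equatorial condition.

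The only delicate point, which I would address explicitly, is the compatibility of the chosen rotation with the full ambient geodesic equations (20)--(23) of Theorem \ref{30}: the rotation must act only on the angular block so that the coordinates $(t,r)$ and the effective parameter $\eta$ are unaffected, and so that the rotated curve still solves the same system. This compatibility is guaranteed by the manifest $SO(3)$-symmetry of the angular part $r^{2}(d\varphi^{2} + \sin^{2}\varphi\,d\theta^{2})$ of the Schwarzschild line element, which ensures that conjugation by any spherical rotation preserves the Christoffel symbols used in the proofs of (21) and (22); hence the construction is canonical and the reduction to the equatorial plane is legitimate.
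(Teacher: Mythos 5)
Your proposal is correct and takes essentially the same route as the paper: the authors likewise deduce from the two-dimensionality of the solution space of $X''(\eta) - ac(\eta)X(\eta) = 0$ that the components $\{x,y,z\}$ are linearly dependent, identify the resulting plane with $W = \mbox{Span}[\hat{\gamma}(0),\hat{\gamma}'(0)]$, and reduce to $\varphi = \pi/2$ by a spherical rotation. Your explicit appeal to the $SO(3)$-symmetry of the angular block to justify that the rotation preserves the geodesic system is a welcome elaboration of a step the paper only asserts, but it is not a different argument.
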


In the following we establish one of our mean results in this section. In fact, we find solutions for 
equations (20), (21) and (23) for an equatorial geodesic, that is  $\varphi = \pi/2$, with the condition 
$r'(\eta) \equiv 0$ (equivalently $r(\eta) > 1$ is a constant).

\begin{theor}\label{68}
An equatorial geodesic line $\alpha(\eta)$ parametrized with effective parameter $\eta$ such that $r'(\eta) \equiv 0$, 
is established by the relations:
\begin{equation}
r^{3} - 2 L^{2}(r - 1)^{2} = 0 \; \; \mbox{ and the Kepler-Newton orbit speed } \; \; v_{orb} = \pm \sqrt{\frac{1}{2r}}.
\end{equation} 

Moreover, using the Newton-time $\hat{t}$ it follows that $v_{orb}(\hat{t}) = v_{orb}(t) = \pm 1/\sqrt{2r}$. That means $\hat{t} = t$. 
Since $r$ is constant, then the time $t = \hat{t}$ is a proper parameter of those geodesic lines. 
\end{theor}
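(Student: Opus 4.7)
The plan is to substitute the equatorial conditions $\varphi(\eta) \equiv \pi/2$ and $r(\eta) \equiv r$ (constant) into the geodesic system from Theorem \ref{30} and then read off each of the stated conclusions.

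First, I would exploit the hypothesis $r'(\eta) \equiv 0$ (hence $r''(\eta) \equiv 0$) together with $\sin\varphi = 1$, $\cos\varphi = 0$ in equation (23). The $(\varphi')^2$-term vanishes because equation (22) forces $\varphi(\eta) = \pi/2$ to be consistent only when $\varphi'(\eta) \equiv 0$ (the right-hand side of (22) vanishes at the equator, so $r^2 \varphi' = $ const $= 0$ by the initial condition $\varphi'(0)=0$ coming from equatoriality). What remains of (23) is
\[
\frac{1}{2r(r-1)} = (r-1)(\theta')^{2}.
\]
Combining this with equation (21), which at the equator reads $r^{2}\theta' = L$, gives $(\theta')^{2}=L^{2}/r^{4}$, and eliminating $\theta'$ yields exactly $r^{3} = 2L^{2}(r-1)^{2}$, i.e.\ the first stated relation.

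Next, I would compute the Kepler–Newton orbital speed $v_{orb}(t) = r\,d\theta/dt$. By the chain rule and equation (20),
\[
\frac{d\theta}{dt} \;=\; \theta'(\eta)\,\frac{d\eta}{dt} \;=\; \frac{L}{r^{2}}\cdot\frac{r-1}{r},
\]
so $v_{orb}(t) = L(r-1)/r^{2}$. Substituting $L^{2} = r^{3}/(2(r-1)^{2})$ from the previous step cleans up to $v_{orb}(t) = \pm 1/\sqrt{2r}$. For the assertion $\hat{t}=t$, I would invoke the normalization fixed in Example 2.11 (where the radial change of variables sets $2MG/c^{2}=1$), under which the classical Newtonian formula $V_{orb}=\sqrt{MG/r}$ becomes $1/\sqrt{2r}$. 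Since this coincides with $v_{orb}(t)$ computed above along the same circular trajectory, the two time-parameters must differ at most by an additive constant along the orbit, so $\hat{t}=t$.

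Finally, for the proper-parameter claim, I would evaluate $\langle \alpha',\alpha'\rangle$ directly in the effective parameter $\eta$ using $r'=\varphi'=0$, $t' = r/(r-1)$, $\theta' = L/r^{2}$, obtaining
\[
\langle \alpha',\alpha'\rangle \;=\; -\frac{r}{r-1} + \frac{L^{2}}{r^{2}} \;=\; \frac{r(3-2r)}{2(r-1)^{2}},
\]
which is a (negative, for $r>3/2$) constant, confirming once more that $\eta$ is proper. Because equation (20) with $r$ constant integrates to an affine relation $t = \frac{r}{r-1}\eta + $ const, the proper-parameter lemma implies $t=\hat{t}$ is itself a proper parameter. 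The main technical obstacle is the justification that matching orbital speeds really forces $\hat{t}=t$ (as opposed to $\hat{t}$ being an affine reparametrization); this hinges on the paper's convention that Newtonian time is normalized so that the orbital-speed formula $\sqrt{MG/r}$ applies, which I would invoke explicitly from Lemma \ref{40} and the normalization $K=1$.
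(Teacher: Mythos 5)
Your proposal is correct and follows essentially the same route as the paper: reduce the geodesic system (20), (21), (23) under $r'\equiv 0$ and $\varphi=\pi/2$ to obtain $r^{3}=2L^{2}(r-1)^{2}$, compute $v_{orb}(t)=h(r)L/r=\pm 1/\sqrt{2r}$, and match this against the Newtonian circular-orbit speed under the normalization $K=1$ to conclude $\hat{t}=t$ and the proper-parameter claim. Your added checks (consistency of $\varphi'\equiv 0$ via (22), the explicit constant value of $\langle\alpha',\alpha'\rangle$, and the affine relation $t=\tfrac{r}{r-1}\eta+\mathrm{const}$) are refinements the paper leaves implicit, not a different argument.
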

\begin{proof}
As by hypothesis $r'(\eta) \equiv 0$ one has that the radial position is constant, it means 
$r(\eta) = r \in \real$ for $\eta \in I$ with $r$ being a constant.  In addition, since $\varphi = \pi/2$, 
the geodesic equations (20), (21), (23) reduce, respectively, to  
\begin{equation}\label{67}
\frac{dt}{d\eta} = \frac{r}{r - 1}, \; \; \; \; \; \; \; \frac{d \theta}{d\eta} = \frac{L}{r^{2}} \; \; \; \; \; \; \; { and} \; \; \; \; \; \; \;
\frac{1}{2r(r - 1)} = \frac{L^{2}}{r^{4}} \; (r - 1),
\end{equation} 
where we have replaced the equation (21) in equation (23) to get the latter equation in (\ref{67}).
Then, the angular momentum $L$ and the constant radial position $r$ are related by the formula:
$$L = \frac{\pm r}{r - 1} \sqrt{\frac{r}{2}} \; \; \; \; \mbox{or equivalently } \; \; \; \; r^{3} - 2 L^{2}(r - 1)^{2} = 0.$$ 

Next, we define the function that we will be called {\it the  Kepler-Schwarzschild orbital speed of a planet}, by 
$$v_{orb}(t) = r \frac{d\theta}{d\eta} \frac{d \eta}{dt}=  \pm h(r) \frac{L}{r},$$ where $t$ is the Schwarzschild time. 
 Then, from $h = 1 - 1/r$ and $L$ defined above, it follows that
$$v_{orb}(t) = \pm \sqrt{\frac{1}{2r}}.$$ 

\vspace{0.1cm}
Now, we observe that the orbit obtained is an uniform circular motion, therefore, 
from $v_{orb}(t) = v_{orb}(\hat{t})$ it follows that $v_{orb}(\hat t) = \pm 1/\sqrt{2r}$. This latter implies 
that the times are the same and the Newton's time $\hat t$ is a proper parameter of those geodesic lines.
\end{proof}

\subsection{Kepler-Bohr-Einstein} We start this subsection saying that the main results of Section 3 are 
given by the Theorem \ref{699} and \ref{68}.  In fact, it based on the Corollary 3.17, each geodesic line is either a 
radial geodesic or an equatorial geodesic. In both types of geodesics, by Theorem  \ref{699} and \ref{68}, 
its Newton's times are their proper parameters. In addition, the speed functions of Schwarzschild and of Newtonian agree each other when the geodesic line is an uniform 
circular  motion 
(or equatorial),  the function $v(t) = \pm \sqrt{1/(2r)}$ and  when $r \rightarrow 1$ we have $v(t) = \sqrt{2}/2$.

\begin{dfn}
 The function $\Phi(r)$ given by 
\begin{equation}\label{69}
\frac{1}{2} \left(\frac{dr}{dt}\right)^{2} = \Phi(r) = \frac{1}{2} h^{2}(r) (1 - b^{2} h(r)), 
\end{equation} 
we will be called of Schwarzschild-Bohr impedance for the electronic system of nucleus of the hydrogen atom. 

We will take as the mass of our ideal hydrogen atom, namely $\check{M}$, that established by  the equation

\begin{equation}\label{70}
\check{M} \phi_{I} = \frac{1}{\hat{\beta}} \; m_{e},
\end{equation} 
where $m_{e}$ denotes the mass of one electron. That means, $\check{M} = \frac{27}{2} (137 ) m_{e}$.
\end{dfn}

We note in particular that this definition is, indeed, the Fermi's definition of the energy of a curve, 
applied in our context. Moreover, one observes that equation (\ref{38}) in Corollary \ref{71} is a particular case of 
equation (\ref{69}), when $b = 1$.

\vspace{0.1cm}

\begin{prop}
It follows from Equation (\ref{70}) that the mass equivalent to the energy of the hydrogen atom is 
$$\check{M} = \left(\frac{r_{I}}{r_{0} \hat{\beta}}\right)^{3} \Delta m.$$ 
Very close to actual experimental value is given taking the mass of nucleus of hydrogen atom   
$$136 = \frac{1 - \hat{\beta}}{\hat{\beta}} \; \; \mbox{which implies } \; \;  
M_{\mbox{proton}} = \frac{136}{137} \check{M} = 1836 \times m_{e}.$$ 
\end{prop}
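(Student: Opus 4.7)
The plan is to combine equation (\ref{70}) with the three facts established earlier in the paper: the value $\phi_{I}=2/27$ from Corollary \ref{08}, the identification $r_{0}=a_{0}\hat{\beta}^{2}$ from item (3) of the Corollary at the end of Section 2, and the mass defect $\Delta m=\frac{m_{e}}{2}\hat{\beta}^{2}$ from equation (\ref{26}). The first task is to pin down what the symbol $r_{I}$ denotes; since $\phi_{I}$ arises from the radial geodesic analysis at $r=3$ in the Lorentz manifold coordinates, while the rescaling from Example 2.15 (the Coulombian analogue of Example 2.14) sends Lorentz-$r$ to the physical radius via the factor $r_{0}=e^{2}/(4\pi\epsilon_{0}m c^{2})$, the natural reading is $r_{I}=3r_{0}$ in physical units, i.e. the radius where the Schwarzschild--Bohr impedance $\Phi(r)$ attains its maximum.

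Given that identification, the first displayed equality is a direct algebraic verification. First I would rewrite equation (\ref{70}) as $\check{M}=\frac{m_{e}}{\hat{\beta}\,\phi_{I}}=\frac{27}{2\hat{\beta}}\,m_{e}$, using $\phi_{I}=2/27$. Then I would compute
\[
\left(\frac{r_{I}}{r_{0}\hat{\beta}}\right)^{3}\Delta m=\left(\frac{3r_{0}}{r_{0}\hat{\beta}}\right)^{3}\cdot\frac{m_{e}\hat{\beta}^{2}}{2}=\frac{27}{\hat{\beta}^{3}}\cdot\frac{m_{e}\hat{\beta}^{2}}{2}=\frac{27}{2\hat{\beta}}\,m_{e},
\]
which matches the previous expression for $\check{M}$, proving the first formula.

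For the second part, the claim $136=(1-\hat{\beta})/\hat{\beta}$ is just the assertion that $1/\hat{\beta}-1=137-1=136$, using the Sommerfeld value $\hat{\beta}=1/137$ recalled in (\ref{20}) and Corollary \ref{27}. Substituting the closed form $\check{M}=\frac{27\cdot 137}{2}m_{e}$ obtained above gives
\[
M_{\mathrm{proton}}=\frac{136}{137}\,\check{M}=\frac{136}{137}\cdot\frac{27\cdot 137}{2}\,m_{e}=\frac{136\cdot 27}{2}\,m_{e}=1836\,m_{e},
\]
as required.

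The only genuinely non-routine step is the first one: interpreting $r_{I}$ as the physical radius associated to the Lorentz-coordinate critical point $r=3$ of Corollary \ref{08}, which requires coupling the Coulombian rescaling implicit in Example 2.15 with the fact (item (3) of the final Corollary of Section 2) that $r_{0}=a_{0}\hat{\beta}^{2}$ is the length unit converting the Bohr radius to the Schwarzschild-type coordinate. Once this bookkeeping between the three length scales $a_{0}$, $r_{0}$ and $r_{I}=3r_{0}$ is made explicit, everything else is a one-line cancellation of powers of $\hat{\beta}$.
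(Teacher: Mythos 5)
Your verification is correct, and the algebra checks out at every step: from (\ref{70}) with $\phi_I=2/27$ one gets $\check{M}=\tfrac{27}{2\hat\beta}m_e=\tfrac{27\cdot 137}{2}m_e$, and with $\Delta m=\tfrac{m_e}{2}\hat\beta^2$ and $r_I=3r_0$ the right-hand side $\left(\frac{3}{\hat\beta}\right)^3\cdot\frac{m_e\hat\beta^2}{2}$ collapses to the same quantity; the second display is then the arithmetic $137-1=136$ and $\tfrac{136}{137}\cdot\tfrac{27\cdot 137}{2}=68\cdot 27=1836$. Note that the paper states this proposition with no proof at all, so there is no argument of the authors' to compare yours against; your reconstruction is the natural (and essentially forced) one. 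The one point you rightly flag as non-routine is also the one genuinely unverifiable point: the symbol $r_I$ is never defined anywhere in the paper. Your reading $r_I=3r_0$ --- the physical radius at which the Schwarzschild--Bohr impedance attains its maximum $\phi_I$, measured in units of the nucleus singularity $r_0=a_0\hat\beta^2$ --- is the unique choice that makes the stated identity reduce to (\ref{70}), and it is consistent with Corollary \ref{08} ($r=3$ in the dimensionless coordinate) and with the paper's later identification $r_0=\mathcal{M}G/c^2$; so your inference is sound, but you should present it as a definition you are supplying rather than one you are citing.
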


\vspace{0.2cm}
The experimental value for the mass of one neutron is $M_{\mbox{neutron}} = 1.67492749804 \times 10^{-27} kg$. 
Our ideal mass of the hydrogen atom is $\hat{M} \approx 1,684 \times 10^{-27} kg$, and so the ratio between these two 
values is 
$$ \frac{M_{\mbox{neutron}}}{\hat{M}} \approx \frac{1675}{1684} \approx 0,994655.$$ 

\vspace{0.2cm}
Finally, we note that using the formula $b^{2} = -\lpr{\alpha'}{\alpha'}$ and the non-ideal impedance function 
given in formula (\ref{69}), namely, $\frac{dr}{dt} = h(r)\sqrt{1 - b^{2} h}$, it follows that 
the maximal value for the energy (or impedance) of the curve is $\phi_{max} = \displaystyle \frac{2}{27 b^{4}}.$
So we can assume a value closer to the experimental value of hydrogen's mass, taking the constant 
$$b^{2} = 1 - \delta = \sqrt{\frac{1849,5}{1836,1}} \approx 0,99637 \; \mbox{ and } \; 
\sqrt{\frac{136}{137}} - \sqrt{\frac{1849,5}{1836,1}} = -0,00002713317.$$ 

\vspace{0.1cm}

\section{A model $(M',\mathbf{g})$ non Ricci flat and its Einstein Gravitational Tensor} 

We start this section taking the model $(M, \mathbf{g})$ from the previous section and we will construct 
a new model  $(M',\mathbf{g})$ not flat, with Ricci curvature  not null and which will satisfy the Einstein field equation. 
It has to be noticed now that our new model is different from the model described by O'Neill in the reference \cite{O'N}, 
that called the Black hole.
 
\vspace{0.1cm}
It begins writing the energy equation for a curve $\alpha(\eta)$ in $(M,\mathbf{g})$ as follows 
\begin{equation}
(a(\alpha))(\eta) = -b^{2}(\eta) + c^{2}(\eta),
\end{equation} 
where $b^2$ and $c^2$ are given by $-b^{2}(\eta) = -h (t')^{2} + \frac{1}{h} (r')^{2}$ \ and \ 
$c^{2}(\eta) = r^{2}[(\varphi')^{2} + (\theta')^{2} \sin^{2} \varphi]$.  So, the energy function takes the form
$$a(\alpha) = \lpr{\alpha'}{\alpha'} = -h (t')^{2} + \frac{1}{h} (r')^{2} + r^{2}[(\varphi')^{2} + (\theta')^{2}  \sin^{2} \varphi].$$ 

Now we observe that the curve $\alpha(\eta)$ can be seen as sum of two curves, one of them $\beta(\eta)$ in the 
Schwarzschild half plane $P_{I} : (t,r) \in \real \times ]1,+\infty[$ and the other   
$\overline{\gamma}(\eta)$ in the lightcone $\C \subset \mink$, where the lightcone is parametrically given by 
$$\Gamma(\rho,\varphi,\theta) = \rho L(\varphi,\theta) \; \; \mbox{ for } \; \; 
L(\varphi,\theta) = (1, \cos \theta \sin \varphi, \sin \theta \sin \varphi, \cos \varphi)$$ 
with linking given by $\rho = r$. Then we have first the following lemma.

\begin{lemma}
Let $\spac$ be the semi-Euclidean space with signature $(-1,-1,1,1)$. For $h(r) = \displaystyle \frac{r - 1}{r}$ 
and $\phi(r) = r + \ln(r -1)$ the Lorentzian parametric surface 
$$f_{I}(t,r) = 2\sqrt{h(r)} \left(\cos (\frac{t}{2}), \sin  (\frac{t}{2}), \cos  (\frac{\phi}{2}), \sin  (\frac{\phi}{2}) \right)$$ 
has induced metric tensor given by 
$$ds^{2}(P_{I}) = \frac{1 - r}{r} dt^{2} + \frac{r}{r-1} dr^{2}.$$ 

This model is constructed using the solution of a null radial geodesic line, obtained in Corollary \ref{72} item (2), namely,  
$t(r) = r_{0} \pm \phi(r)$ for these null lines.
\end{lemma}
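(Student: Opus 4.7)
The plan is to compute the pullback of the semi-Euclidean quadratic form of $\spac$ by $f_{I}$ coordinate by coordinate, and verify that the three entries of the induced $2\times 2$ metric tensor match the claimed line element. Concretely, I would compute the three pairings $\lpr{\partial_{t}f_{I}}{\partial_{t}f_{I}}$, $\lpr{\partial_{t}f_{I}}{\partial_{r}f_{I}}$, $\lpr{\partial_{r}f_{I}}{\partial_{r}f_{I}}$ under the signature $(-1,-1,+1,+1)$, using the shorthand $h=h(r)$ and $\phi=\phi(r)$ throughout.

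The $t$-derivative is immediate: $\partial_{t}f_{I}=\sqrt{h}(-\sin(t/2),\cos(t/2),0,0)$, so
$\lpr{\partial_{t}f_{I}}{\partial_{t}f_{I}}=-h[\sin^{2}(t/2)+\cos^{2}(t/2)]=-h=(1-r)/r$, matching the claimed coefficient of $dt^{2}$.

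For the $r$-derivative, I would decompose $\partial_{r}f_{I}=X+Y$, where $X$ collects the contribution from differentiating the prefactor $2\sqrt{h(r)}$ and $Y$ collects the contribution from differentiating $\phi(r)/2$ inside the last two coordinates. Explicitly,
\[
X=\frac{h'(r)}{\sqrt{h}}\bigl(\cos(t/2),\sin(t/2),\cos(\phi/2),\sin(\phi/2)\bigr),\qquad Y=\sqrt{h}\,\phi'(r)\bigl(0,0,-\sin(\phi/2),\cos(\phi/2)\bigr).
\]
The main structural observation, which is really the point of the lemma, is that $X$ has equal squared amplitudes in the two negative-signature slots and in the two positive-signature slots; consequently $\lpr{X}{X}=(h'/\sqrt{h})^{2}[-1+1]=0$, so $X$ is null in $\spac$. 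A brief $\sin\cos$-cancellation check shows that $\lpr{X}{Y}=0$ as well (only the last two positive-signature coordinates couple, and they cancel). Therefore $\lpr{\partial_{r}f_{I}}{\partial_{r}f_{I}}=\lpr{Y}{Y}=h(\phi')^{2}$, and using $\phi'(r)=1+\tfrac{1}{r-1}=\tfrac{r}{r-1}=1/h$ this simplifies to $1/h=r/(r-1)$, matching the claimed coefficient of $dr^{2}$.

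For the cross term $\lpr{\partial_{t}f_{I}}{\partial_{r}f_{I}}$, the $Y$-contribution vanishes because $Y$ lives entirely in the last two coordinates while $\partial_{t}f_{I}$ lives in the first two; the $X$-contribution, after applying the two minus signs of the signature to the first two coordinates, collapses to $h'[\sin(t/2)\cos(t/2)-\cos(t/2)\sin(t/2)]=0$, so $g_{tr}=0$. Putting the three entries together yields $ds^{2}(P_{I})=-h\,dt^{2}+(1/h)\,dr^{2}=(1-r)/r\,dt^{2}+r/(r-1)\,dr^{2}$, as claimed. The only nontrivial step is the one I highlighted above, namely recognizing that the signature $(-1,-1,+1,+1)$ of $\spac$ is exactly what forces $X$ to be null and $\spac$-orthogonal to $Y$; without this cancellation one would get spurious $(h')^{2}/h$ terms in $g_{rr}$. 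This is also the reason why the lemma picks precisely $\phi(r)=r+\ln(r-1)$: the identity $\phi'=1/h$ is both the null radial geodesic equation of Corollary \ref{72}(2) and the exact factor needed to produce the Schwarzschild coefficient $r/(r-1)$.
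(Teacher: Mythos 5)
Your computation is correct, and it is the natural direct verification: the paper states this lemma without any proof, so there is nothing to compare against beyond the implicit expectation that one pulls back the $(-1,-1,+1,+1)$ form through $f_{I}$. Your decomposition $\partial_{r}f_{I}=X+Y$ with $X$ null and $\spac$-orthogonal to $Y$, together with the identity $\phi'=1/h$ coming from the null radial geodesic of Corollary 3.10(2), is exactly the structure that makes the surface work, and all three metric coefficients check out.
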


Since $\lim_{r \to 1} h(r) = 0$  and the functions $\cos$ and $\sin$ are limited, we have the following corollary.
\vspace{0.1cm}
\begin{corol}
The Lorentzian parametric surface $f_{I}(t,r)$ given by 
$$f_{I} : \real \times ]1,+\infty[   \mapsto 2\sqrt{h(r)} \left(\cos  (\frac{t}{2}), \sin  (\frac{t}{2}), 
\cos  (\frac{\phi}{2}), \sin  (\frac{\phi}{2})\right) \in \spac$$ 
can be continuously extended to $r = 1$ where 
$$\lim_{r \to 1} f_{I}(t,r) = 0 \; \; \mbox{ for all } \; \; t \in \real.$$ 
\end{corol}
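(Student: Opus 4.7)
The plan is to reduce the statement to a straightforward squeeze/sandwich argument applied coordinate by coordinate. I first observe that $f_I(t,r)$ is a product of the scalar factor $2\sqrt{h(r)}$ with a vector whose four components are trigonometric functions. Since the target space $\spac$ is simply $\real^{4}$ as a topological space, continuity at $r=1$ reduces to checking that each of the four real-valued coordinate functions tends to $0$ as $r\to 1^{+}$, for every fixed $t\in\real$.

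First I would establish the elementary limit $\lim_{r\to 1^{+}} h(r) = \lim_{r\to 1^{+}} \frac{r-1}{r} = 0$, and hence $\lim_{r\to 1^{+}} 2\sqrt{h(r)} = 0$. The first two coordinates of $f_{I}(t,r)$ are $2\sqrt{h(r)}\cos(t/2)$ and $2\sqrt{h(r)}\sin(t/2)$, where the trigonometric factor is a constant (in $r$) depending only on the fixed parameter $t$. Multiplying a bounded constant by a quantity tending to zero, these two coordinates clearly tend to $0$.

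The main (and essentially only) subtlety lies in the last two coordinates, because $\phi(r)=r+\ln(r-1)$ is not bounded near $r=1$; indeed $\lim_{r\to 1^{+}}\phi(r)=-\infty$, so $\cos(\phi/2)$ and $\sin(\phi/2)$ oscillate wildly near $r=1$. The key point, however, is that these oscillations are uniformly bounded: $|\cos(\phi(r)/2)|\leq 1$ and $|\sin(\phi(r)/2)|\leq 1$ for all $r>1$. Therefore each of the last two coordinates is bounded in absolute value by $2\sqrt{h(r)}$, and the squeeze theorem finishes the argument.

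Putting it together, I would combine the four componentwise limits into a single conclusion: for every fixed $t\in\real$, each coordinate of $f_{I}(t,r)$ lies in the interval $[-2\sqrt{h(r)},\,2\sqrt{h(r)}]$, so $\lim_{r\to 1^{+}} f_{I}(t,r)=(0,0,0,0)$, which is the claimed continuous extension. There is no hard step; the only place where one must pause is in acknowledging that the divergence $\phi(r)\to-\infty$ does not spoil the limit because it is swallowed by the bounded range of the trigonometric functions.
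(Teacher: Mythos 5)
Your proposal is correct and is essentially the paper's own argument: the authors introduce the corollary with the single observation that $\lim_{r\to 1}h(r)=0$ while $\cos$ and $\sin$ are bounded, which is exactly your squeeze argument. Your only addition is to make explicit that $\phi(r)=r+\ln(r-1)\to-\infty$ causes oscillation but is harmless because of the uniform bound on the trigonometric factors — a worthwhile clarification, but not a different proof.
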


\vspace{0.2cm}
So from the above it follows that in the product manifold  $P_{I} \times \C \subset \spac \times \mink$, 
using the identification $r = \rho$, we can see the Schwarzschild spacetime as a Lorentzian immersed parametric 
$4$-surface in $\spac \times \mink$. For the next notation, we will prefer to use the directed sum notation.

\vspace{0.3cm}

Now we focus in  the case when $r \in ]0, 1[$ and we will construct a model of Lorentzian spacetime 
which is not a Ricci flat space.
\vspace{0.1cm}
\begin{dfn}
For the strip $\real \times ]0,1[$, let $\mathbf{g}$  be the Lorentzian metric tensor of the surface 
\begin{equation}
f_{I'}(t,r) = 2\sqrt{\frac{1 - r}{r}\; } \left(\cos(\frac{t}{2}), \sin (\frac{t}{2}), 
\cos (\frac{\phi}{2}), \sin (\frac{\phi}{2}) \right)
\end{equation} 
where, now, we are taking $\phi(r) = 1 - r - \ln (1 - r)$ from Corollary \ref{72}, for $b^{2} = 0.$  
\end{dfn}

\begin{theor}\label{010}
The map 
\begin{equation}\label{011}
F_{I} : (t,r,\varphi,\theta) \in \real \times ]1,+\infty[ \times \real^{2} \; \mapsto \; f_{I}(t,r) \oplus 
r L(\varphi,\theta) \in \spac \oplus \mink
\end{equation} 
is an isometric proper immersion of the Schwarzschild spacetime. It is a Ricci flat space.

The map 
\begin{equation}\label{012}
F_{I'} : (t,r,\varphi,\theta) \in \real  \times ]0, 1[ \times \real^{2} \; \mapsto \; f_{I'}(t,r) \oplus 
r L(\varphi,\theta) \in \spac \oplus \mink
\end{equation} 
gives a parametric Lorentzian spacetime, which we will denote by $(M',\mathbf{g})$,  with line element given by 
\begin{equation}
ds^{2} = \frac{r - 1}{r} dt^{2} + \frac{r}{1 - r} dr^{2} + r^{2} [(d\varphi)^{2} + \sin^{2} \varphi \; (d\theta)^{2}].
\end{equation}  
\end{theor}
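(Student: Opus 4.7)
My plan is to verify both assertions by direct computation of the pullback metric, exploiting the direct-sum structure of the target $\spac \oplus \mink$: under $F_I$ (respectively $F_{I'}$) the induced quadratic form splits as the sum of the pullback of the $\spac$ form via $f_I$ (resp.\ $f_{I'}$) and the pullback of the $\mink$ form via $rL(\varphi,\theta)$, so cross-terms between the two factors automatically vanish.

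For the $\spac$-summand of $F_I$, set $A(r)=2\sqrt{h(r)}$ with $h(r)=(r-1)/r$, and observe that $\phi(r)=r+\ln(r-1)$ satisfies $\phi'=r/(r-1)=1/h$. Writing $f_I=A(r)\,e$ with $e=(\cos(t/2),\sin(t/2),\cos(\phi/2),\sin(\phi/2))$, a direct check in signature $(-1,-1,1,1)$ gives $\lpr{e}{e}=-1+1=0$, so $e$ is null in $\spac$, together with $\lpr{\partial_t e}{\partial_t e}=-1/4$, $\lpr{\partial_\phi e}{\partial_\phi e}=1/4$, and the mixed products $\lpr{e}{\partial_t e}$, $\lpr{e}{\partial_\phi e}$, $\lpr{\partial_t e}{\partial_\phi e}$ all equal to zero (the first two by differentiating the constant $\lpr{e}{e}=0$, the third because $\partial_t e$ is supported in the first two coordinates of $\spac$ while $\partial_\phi e$ is supported in the last two). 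Differentiating $f_I$ yields $\partial_t f_I=A\,\partial_t e$ and $\partial_r f_I=A'\,e+A\phi'\,\partial_\phi e$, and the nullity of $e$ kills the $(A')^2$ contribution, giving
\[
g_{tt}=-\tfrac{A^{2}}{4}=-h,\qquad g_{rr}=\tfrac{A^{2}(\phi')^{2}}{4}=\tfrac{1}{h},\qquad g_{tr}=0.
\]
For the $\mink$-summand, Proposition 2.3 gives $\lpr{L}{L}=0$, so $\partial_r(rL)=L$ contributes nothing to $g_{rr}$ or to the cross terms; the standard spherical computation in $\mink$ returns the round-sphere metric $r^{2}(d\varphi^{2}+\sin^{2}\varphi\,d\theta^{2})$. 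Summing both contributions recovers the Schwarzschild line element from the beginning of Section 3, so $F_I$ is an isometric immersion; properness follows from the fact that the Euclidean norm of $rL(\varphi,\theta)$ grows unboundedly with $r$, so a compact image in $\spac\oplus\mink$ forces $r$ bounded and bounded away from $1$ (since $f_I$ stays bounded while the lightcone component blows up). Ricci-flatness of the Schwarzschild metric is classical and follows by extending the Christoffel-symbol computation begun in the proof of Theorem \ref{30} and verifying component-wise that $R_{ij}=0$.

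The case of $F_{I'}$ is structurally identical, with $A$ replaced by $\tilde A=2\sqrt{(1-r)/r}$ and $\phi$ by $\tilde\phi(r)=1-r-\ln(1-r)$, which satisfies $\tilde\phi'=r/(1-r)=-1/h$. The same orthogonality cancellations yield $g_{tt}=-\tilde A^{2}/4=-(1-r)/r=(r-1)/r$ and $g_{rr}=\tilde A^{2}(\tilde\phi')^{2}/4=r/(1-r)$, while the spherical piece is unchanged, giving exactly the claimed line element. The main obstacle, and the observation to make up front, is that the $(A')^{2}$ (resp.\ $(\tilde A')^{2}$) contribution to $g_{rr}$ drops out thanks to the nullity of $e$ in $\spac$; without this structural remark one is left with an apparently overdetermined identity whose working depends critically on the choices $\phi'=1/h$ and $\tilde\phi'=-1/h$ dictated by the null radial geodesic solutions obtained earlier in Corollary \ref{72}.
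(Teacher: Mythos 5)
Your pullback computation is correct and complete: the nullity of $e$ in $\spac$ and of $L(\varphi,\theta)$ in $\mink$, together with the choices $\phi'=1/h$ (resp.\ $\tilde\phi'=-1/h$) coming from the null radial geodesics of Corollary \ref{72}, is exactly what kills the $(A')^{2}$ and $\partial_r(rL)$ contributions to $g_{rr}$ and the cross terms, and the resulting line elements agree with the statement. This is essentially the content of the paper's Lemma 4.1, which the paper states \emph{without} proof; indeed the paper gives no displayed proof of this theorem at all --- it defers the Ricci-flatness of (\ref{011}) to O'Neill and immediately turns to the curvature of $(M',\mathbf{g})$ --- so your argument supplies detail the paper omits rather than diverging from it.

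The one genuine gap is the properness claim. Your assertion that a compact image forces $r$ to be bounded away from $1$ does not hold: as $r\to 1^{+}$ nothing blows up, since $f_I(t,r)\to 0$ and $rL(\varphi,\theta)\to L(\varphi,\theta)$, so a sequence $r_n\to 1^{+}$ at fixed $(t,\varphi,\theta)$ has image converging in $\spac\oplus\mink$, and the preimage of a small compact neighbourhood of the limit is not compact in $\real\times\,]1,+\infty[\,\times\real^{2}$. Moreover $t\mapsto(\cos(t/2),\sin(t/2))$ and $(\varphi,\theta)\mapsto L(\varphi,\theta)$ are periodic, so preimages of single points already fail to be compact on the stated domain; topological properness is false as written (the paper offers no argument for it either). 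To keep that clause you would need either to reinterpret ``proper'' --- for instance as non-degeneracy of the induced metric, which your computation does establish --- or to restrict the domain to a fundamental region in $t,\varphi,\theta$ and treat the behaviour at $r=1$ separately.
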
 

\vspace{0.2cm}

Here we note that the first statement in Theorem \ref{010} about the flat Ricci condition for the 
Schwarzschild spacetime given in (\ref{011}) it can be found in reference \cite{O'N}.  Therefore, in our next steps we 
will focus our attention, in proving that the spacetime $(M',\mathbf{g})$ has  non vanishes Ricci tensor. 

\vspace{0.2cm}

We begin calculating the Ricci curvature tensor $Ric$ of $M'$ and the Einstein's gravitational tensor $\mathbf{G}$ defined by 
\begin{equation}\label{83}
\mathbf{G} = Ric - \frac{1}{2} S \mathbf{g},
\end{equation}
where $S$ denotes the scalar curvature. 

\vspace{0.2cm}
We will think $\mathbf{G}$ as the rules or precepts, for the gravitation and for the Bohr hydrogen atom. To obtain a 
realistic meaning for the tensor $\mathbf{G}$ we will use the Einstein equation $T = k \mathbf{G}$, where 
$T$ is the energy-moment tensor and $k$ is a convenient constant. 

\vspace{0.2cm}

The following lemma is the Exercise (5) in O'Neill book \cite{O'N}, page 156. It has a simple proof from the definition of $\mathbf{G}$. 
 
\begin{lemma}\label{73}
For each abstract surface $(U,ds^{2})$ such that $ds^{2} = E(r) dt^{2} + G(r) dr^{2}$, it follows the following statements:

\vspace{0.1cm}
\begin{enumerate}
\item The gradient function for $r$ is given by  \ $grad \; r = \frac{1}{G} \partial_{r}$. 

\vspace{0.1cm}
\item The Hessian tensor is given by  
\begin{equation}\label{77}
H^{r}(\partial_{t},\partial_{t}) = \frac{E_{r}}{2G} \; \; \;  \; \mbox{ and } \; \; \; \; H^{r}(\partial_{r},\partial_{r}) = 
\frac{-G_{r}}{2G}.
\end{equation} 
Observing that the Hessian is a diagonal symmetric tensor, $H^{r}(\partial_{t},\partial_{r}) = 0$. 

\vspace{0.1cm}
\item The Laplacian operator takes the form 
\begin{equation}\label{78}
\Delta  r = \frac{1}{2G}\left(\frac{E_{r}}{E} - \frac{G_{r}}{G}\right).
\end{equation}
\end{enumerate}
\end{lemma}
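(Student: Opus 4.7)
The plan is to compute each of the three items directly from the standard Riemannian definitions, taking advantage of the fact that the metric is diagonal and that the coefficients $E, G$ depend only on $r$.

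For item (1), I would use the defining relation $\langle \operatorname{grad} r, X \rangle = X(r) = dr(X)$ for every tangent vector $X$. Since the inverse metric has $g^{tt} = 1/E$, $g^{rr} = 1/G$ and no off-diagonal components, the classical formula $\operatorname{grad} r = g^{ij}(\partial_j r)\partial_i$ collapses to a single term, yielding $\operatorname{grad} r = (1/G)\,\partial_r$.

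For item (2), the key preliminary is the list of Christoffel symbols of the diagonal metric $ds^2 = E(r)\,dt^2 + G(r)\,dr^2$. Applying $\Gamma^k_{ij} = \tfrac{1}{2} g^{km}(\partial_i g_{jm} + \partial_j g_{im} - \partial_m g_{ij})$ and using that the only nonzero derivatives of $g_{ij}$ are $\partial_r E$ and $\partial_r G$, I would find that the only non-vanishing symbols are $\Gamma^t_{tr}=\Gamma^t_{rt} = E_r/(2E)$, $\Gamma^r_{tt} = -E_r/(2G)$, and $\Gamma^r_{rr} = G_r/(2G)$. Since $r$ is a coordinate, $\partial_i \partial_j r = 0$ identically, so the Hessian reduces to $H^r(\partial_i,\partial_j) = -(\nabla_{\partial_i}\partial_j)(r) = -\Gamma^r_{ij}$. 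Reading off the three entries from the Christoffel list then gives the claimed formulas in (\ref{77}), together with the vanishing of the mixed term $H^r(\partial_t,\partial_r)$ because $\Gamma^r_{tr} = 0$.

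For item (3), I would use $\Delta r = \operatorname{tr}_{\mathbf{g}} H^r = g^{ij} H^r(\partial_i,\partial_j)$. Because the Hessian and the inverse metric are both diagonal in the $(\partial_t,\partial_r)$-basis, this trace reduces to the two-term sum $\frac{1}{E}\cdot\frac{E_r}{2G} + \frac{1}{G}\cdot\frac{-G_r}{2G}$, which factors as $\frac{1}{2G}\bigl(\frac{E_r}{E}-\frac{G_r}{G}\bigr)$ and matches (\ref{78}).

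Realistically there is no serious obstacle: the statement is a bookkeeping exercise in two-dimensional semi-Riemannian geometry. The only points requiring care are (i) verifying that every Christoffel symbol absent from the short list above genuinely vanishes, which follows from $E, G$ being functions of $r$ alone, and (ii) keeping track of signs in the definition of the Hessian so that the minus sign from $-\Gamma^r_{tt}$ correctly produces the \emph{plus} sign $E_r/(2G)$ appearing in (\ref{77}).
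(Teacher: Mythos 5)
Your computation is correct: the Christoffel symbols of the diagonal metric are exactly as you list them, the identity $H^{r}(\partial_{i},\partial_{j})=-\Gamma^{r}_{ij}$ holds because $r$ is a coordinate function, and tracing with $g^{tt}=1/E$, $g^{rr}=1/G$ gives (\ref{78}). The paper itself supplies no argument here --- it only points to Exercise 5 on page 156 of O'Neill --- and the direct verification you carry out is precisely the computation that exercise intends, valid regardless of the sign of $E$ since only $1/E$ and $1/G$ enter.
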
 

\vspace{0.2cm}
Now we take again from \cite{O'N} page 211, the Corollary 43 applied for $M'$,  which we can see as the warped 
product of the Lorentzian strip $P_{I'} = (\real \times ]0,1[, \mathbf{g})$ and the Euclidean sphere 
$S^{2} \subset \real^{3}$, that means $M' = P_{I'} \times_r S^2$.  The notation $\mathfrak{L}(X)$ in the next results means 
the tangent vector field to the manifold $X$.

\begin{lemma}\label{79}
For the Ricci operator on $M' = P_{I'} \times_r S^2$, using the dimension 2 of the surfaces $P_{I'}$ and $S^{2}$, it follows: 

\begin{enumerate}
\vspace{0.1cm}

\item $Ric(X,Y) = Ric^{ P_{I'}}(X,Y) - \displaystyle \frac{2}{r} H^{r}(X,Y)$ for all horizontal fields 
$X,Y \in \mathfrak{L}( P_{I'})$. 
\vspace{0.1cm}
\item $Ric(X,W) = 0$ for all $X\in \mathfrak{L}(P_{I'})$ and all $W \in \mathfrak{L}(S^{2})$. 
\vspace{0.1cm}
\item $Ric(V,W) = Ric^{S^{2}}(V,W) - \lpr{V}{W} r^{*}$ for all vertical fields $V,W \in \mathfrak{L}(S^{2})$, where 
$$
r^{*} = \frac{\Delta r}{r} + \frac{\left < grad \ r, grad \ r\right>}{r^2}$$
and $\Delta r$ is the Laplacian on $P_{I'}$.
\end{enumerate}
\end{lemma}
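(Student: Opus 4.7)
The plan is to derive the three identities as a direct specialization of O'Neill's general formula for the Ricci tensor of a warped product $B \times_{f} F$, which the authors have already invoked to set up the lemma. For each $d = \dim F$, that formula reads: for horizontal vectors $X, Y \in \mathfrak{L}(B)$,
\[
Ric(X,Y) = Ric^{B}(X,Y) - \frac{d}{f}\, H^{f}(X,Y);
\]
for $X$ horizontal and $V$ vertical, $Ric(X,V) = 0$; and for vertical lifts $V, W$ from $F$,
\[
Ric(V,W) = Ric^{F}(V,W) - \lpr{V}{W}\, f^{\sharp}, \qquad f^{\sharp} = \frac{\Delta f}{f} + (d-1)\,\frac{\lpr{\mathrm{grad}\, f}{\mathrm{grad}\, f}}{f^{2}},
\]
where the gradient, Hessian and Laplacian are computed on the base $B$ with its (possibly indefinite) metric.

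First I would read off the warping data from the definition of $M'$. The line element of $M'$ contains the sphere factor in the form $r^{2}(d\varphi^{2} + \sin^{2}\varphi\, d\theta^{2})$, so the warping function is $f = r$; and since the fiber is $S^{2}$, we have $d = 2$. Substituting $f = r$ and $d = 2$ into the horizontal identity gives statement (1) with the coefficient $2/r$, the mixed case (2) is automatic, and the vertical identity yields
\[
r^{\sharp} = \frac{\Delta r}{r} + (2-1)\,\frac{\lpr{\mathrm{grad}\, r}{\mathrm{grad}\, r}}{r^{2}},
\]
which is exactly the $r^{*}$ appearing in statement (3). Thus the three claims are three coordinate-free consequences of a single general formula, obtained by the particular choices $f = r$ and $d = 2$.

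The only piece of bookkeeping that requires care is that the base $P_{I'}$ is a Lorentzian surface whose metric has the form $E(r)\, dt^{2} + G(r)\, dr^{2}$, so the gradient, Hessian and Laplacian of $r$ that enter $f^{\sharp}$ and the horizontal correction must be interpreted in that indefinite setting. But these are precisely the quantities supplied explicitly by Lemma \ref{73}, so the proof reduces to a substitution. I do not anticipate any genuine obstacle: the argument is essentially a one-line invocation of O'Neill's Corollary 43, made concrete by Lemma \ref{73} to handle the two-dimensional Lorentzian base.
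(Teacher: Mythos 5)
Your proposal is correct and is exactly the route the paper takes: the authors state Lemma \ref{79} as a direct application of O'Neill's Corollary 43 (page 211) to the warped product $M' = P_{I'} \times_{r} S^{2}$ with warping function $f = r$ and fiber dimension $d = 2$, giving no further argument. Your specialization $d=2$, $f=r$ (so that the $(d-1)$ factor in $f^{\sharp}$ becomes $1$ and the horizontal correction becomes $2H^{r}/r$) is precisely the substitution the paper intends, with Lemma \ref{73} supplying the Lorentzian gradient, Hessian and Laplacian.
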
 

The proof the following lemma is analogous to the construction of the Schwarzschild spacetime and the Lemma 1 in page 366 of 
the reference \cite{O'N}.  

\begin{lemma} For the warped product $M' = P_{I'} \times_{r} S^{2}$ the Gauss curvature is given by 
$K(P_{I'}) = - \displaystyle \frac{1}{r^{3}}$. Therefore:

\begin{enumerate}
\item Since $dim(P_{I'}) = 2$ it follows that $Ric^{P_{I'}}(X,Y) = K_{P_{I'}} (X, Y)$. Moreover, from Lemma \ref{73} item (2), one has
$$Ric(X,Y) = 0 \; \; \mbox{ for each } \; \; X,Y \in \mathfrak{L}(P_{I'}).$$

\item Furthermore,  $r^*$ is given by
\begin{equation}\label{81}
 r^{*} = \displaystyle {\frac{1}{2 r G}} \left\{\frac{E_{r}}{E} - \frac{G_{r}}{G}\right\} + \frac{1}{r^{2} G} = \frac{-1}{r^{2}},
\end{equation}
and so,  $Ric(V,W) = \displaystyle \frac{2}{r^{2}} \lpr{V}{W},$ for $V,W \in \mathfrak{L}(S^{2})$.
\end{enumerate} 
\end{lemma}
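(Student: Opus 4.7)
My plan is to verify in order the three assertions in the statement: the Gauss curvature $K(P_{I'}) = -1/r^{3}$, the vanishing of the horizontal Ricci components, and the evaluation $r^{*} = -1/r^{2}$ together with its consequence for the vertical Ricci. Throughout, the one observation that keeps the arithmetic clean is that on the Lorentzian surface $P_{I'}$, the metric coefficients $E(r) = (r-1)/r$ and $G(r) = r/(1-r)$ satisfy $EG = -1$, so the metric determinant is constant.

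For the Gauss curvature, I would work directly from the three non-vanishing Christoffel symbols of the diagonal metric $E\,dt^{2} + G\,dr^{2}$, namely $\Gamma^{t}_{tr} = E_{r}/(2E)$, $\Gamma^{r}_{tt} = -E_{r}/(2G)$, $\Gamma^{r}_{rr} = G_{r}/(2G)$. Computing the single independent Riemann component and using the 2D identity $K = R^{t}{}_{rtr}/G$ yields
\[
K = -\frac{E_{rr}}{2EG} + \frac{E_{r}^{2}}{4E^{2}G} + \frac{E_{r}G_{r}}{4EG^{2}}.
\]
Substituting $E_{r} = 1/r^{2}$, $E_{rr} = -2/r^{3}$, $G_{r} = 1/(1-r)^{2}$ and using $EG = -1$ to simplify the denominators, the last two contributions cancel against each other and the first reduces to $-1/r^{3}$.

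Item (1) is then almost immediate. In dimension two the identity $Ric^{P_{I'}} = K\,g^{P_{I'}}$, combined with Lemma~\ref{79}(1) and the Hessian formulas (\ref{77}), produces on each basis pair two explicit quantities that one checks cancel. For the $(\partial_{t},\partial_{t})$ entry the computation gives $K\cdot E = (1-r)/r^{4}$ and $(2/r)\,H^{r}(\partial_{t},\partial_{t}) = E_{r}/(rG) = (1-r)/r^{4}$; the $(\partial_{r},\partial_{r})$ entry is analogous, so $Ric(X,Y) = 0$ for all horizontal $X,Y$.

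For item (2), I would plug the formula (\ref{78}) for $\Delta r$ and the easily-computed $\langle \operatorname{grad} r, \operatorname{grad} r\rangle = 1/G = (1-r)/r$ into the expression for $r^{*}$ from Lemma~\ref{79}(3); substitution gives the two contributions $-1/r^{3}$ and $(1-r)/r^{3}$, whose sum is $-r/r^{3} = -1/r^{2}$. The vertical Ricci then follows from Lemma~\ref{79}(3) using $Ric^{S^{2}}(V,W) = \langle V,W\rangle_{S^{2}}$ on the unit sphere together with the warped-product relation $\langle V,W\rangle = r^{2}\langle V,W\rangle_{S^{2}}$, yielding $Ric(V,W) = \langle V,W\rangle_{S^{2}} + (1/r^{2})\langle V,W\rangle = (2/r^{2})\langle V,W\rangle$. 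The only step requiring genuine care is the Lorentzian Gauss-curvature computation, where the sign conventions must be tracked; once $EG = -1$ is exploited, the rest reduces to straightforward bookkeeping.
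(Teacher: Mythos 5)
Your proposal is correct and follows essentially the same route as the paper: the same warped-product decomposition of the Ricci tensor, the same Hessian and Laplacian formulas from Lemma \ref{73}, and the same cancellations (e.g. $K\cdot E = (1-r)/r^{4} = (2/r)H^{r}(\partial_t,\partial_t)$ and $r^{*} = -1/r^{3} + (1-r)/r^{3} = -1/r^{2}$). The only departure is cosmetic: you compute $K(P_{I'})$ by assembling the Riemann tensor from the Christoffel symbols of $E\,dt^{2}+G\,dr^{2}$, whereas the paper invokes O'Neill's closed-form expression $K = -\partial_r\bigl(e_r/g\bigr)$ for orthogonal coordinates with $e=\sqrt{h}$, $g=1/\sqrt{h}$; both yield $-1/r^{3}$, and your cancellation of the $E_r^2$ and $E_rG_r$ terms via $EG=-1$ checks out.
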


\begin{proof}
First we will compute $K(P_{I'})$. Here we pay attention that in our model $M'$ the function 
$h = \displaystyle \frac{1-r}{r}$. Hence, the second fundamental form has $e = \sqrt{h(r)}$ and $g = \sqrt{1/h(r)}$. 
Now we apply the formula (2) in Proposition 44, page 81 of reference \cite{O'N}, and observing that signal of 
$G = \displaystyle \frac{1}{h(r)}$ is $\epsilon_{2} = 1$ and that $eg = 1$, it results in 
$$K(P_{I'}) = - \frac{\partial}{\partial r}\left(\frac{e_{r}}{g} \right) = 
- \frac{\partial}{\partial r} \left(\frac{-1}{2r^{2}} \right) = \frac{-1}{r^{3}}.$$

\vspace{0.1cm}
For proving item (1) we use formulas (\ref{77}) to obtain
$$H^{r}(\partial_{t},\partial_{t}) = \frac{1 - r}{2r} \left(1 - \frac{1}{r}\right)' = \frac{1 - r}{2r^{3}} \; \; \; \; 
\mbox{and} \; \; \; \; H^{r}(\partial_{r},\partial_{r}) = \frac{1 - r}{2r} \left(\frac{-r}{1 - r}\right)' = \frac{-1}{2r(1 - r)}.$$ 

Now, since that the volume form is preserved, $EG = -1$, the statement (1) of this lemma follows. Indeed, because 
the Gauss curvature is $K(P_{I'}) = -\displaystyle \frac{1}{r^{3}}$ and 
$Ric^{P_{I'}}(\partial_t, \partial_t) = \displaystyle \frac{1 - r}{r^4}$, 
it follows from item (1) of Lemma \ref{79} that 
$$Ric(\partial_{t},\partial_{t}) = \frac{-1}{r^{3}} \; \frac{-1 + r}{r} - \frac{2}{r} \; \frac{1 - r}{2r^{3}} = 0 \; \; 
{ and } \; \; Ric(\partial_{r},\partial_{r}) = \frac{-1}{r^{3}} \; \frac{r}{1 - r} - \frac{2}{r} \; \frac{-1}{2r(1 - r)} = 0.$$ 
Hence by bilinearity of the Ricci tensor it follows the statement (1). 

\vspace{0.3cm}
For item (2), we observe that since $\displaystyle \frac{E_{r}}{E} = - \frac{G_{r}}{G}$ using formula (\ref{78}) we obtain 
$$\Delta r = \frac{1 - r}{r} \; \frac{-1}{r(1 - r)} = \frac{-1}{r^{2}}$$
therefore it follows that
$$r^{*} = \frac{-1}{r^{3}} + \frac{1 - r}{r^{3}} = \frac{-1}{r^{2}}.$$ 

\vspace{0.2cm}
Finally, using $r^*$ above,  the Gauss curvature of spheres of $\real^{3}$  and  item (2) of Lemma \ref{79} for 
$V,W \in  \mathfrak{L}(S^{2})$, it follows
$$Ric(V,W) = \frac{1}{r^{2}} \lpr{V}{W} - r^{*} \lpr{V}{W} = \frac{2}{r^{2}} \lpr{V}{W}.$$ 
\end{proof}

From these lemmas above we have the main theorem of this section.

\vspace{0.2cm}
\begin{theor}
For the spacetime $(M',\mathbf{g})$, for each $X,Y \in \mathfrak{L}(P_{I'})$ and $V,W \in \mathfrak{L}(S^{2})$, 
one has $Ric(X,Y) = 0$ and $Ric(V,W) = \frac{2}{r^{2}} \lpr{V}{W}$. Therefore the Einstein's gravitational tensor 
$\mathbf{G}$ is given by  
\begin{equation}\label{84}
\mathbf{G}(X,Y) = \frac{-2}{r^{2}} \lpr{X}{Y}.
\end{equation}
\begin{equation}\label{85}
\mathbf{G}(V,W) = 0. 
\end{equation}
\end{theor}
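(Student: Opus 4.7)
The first half of the statement, namely $\mathrm{Ric}(X,Y)=0$ for $X,Y\in\mathfrak{L}(P_{I'})$ and $\mathrm{Ric}(V,W)=\frac{2}{r^{2}}\lpr{V}{W}$ for $V,W\in\mathfrak{L}(S^{2})$, is already contained verbatim in the preceding lemma; the cross term $\mathrm{Ric}(X,W)$ vanishes by item (2) of Lemma \ref{79}. So my plan is to take this part as given and focus entirely on deriving the formulas for $\mathbf{G}$ from the definition $\mathbf{G}=\mathrm{Ric}-\tfrac{1}{2}S\,\mathbf{g}$.

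The only new computation I would carry out is that of the scalar curvature $S$. Since $\mathrm{Ric}$ vanishes on horizontal fields, only the spherical directions contribute to the trace. Working in the coordinate basis $\{\partial_{t},\partial_{r},\partial_{\varphi},\partial_{\theta}\}$, the inverse metric on the spherical block is $g^{\varphi\varphi}=1/r^{2}$ and $g^{\theta\theta}=1/(r^{2}\sin^{2}\varphi)$, while $\mathrm{Ric}(\partial_{\varphi},\partial_{\varphi})=\frac{2}{r^{2}}\,r^{2}=2$ and $\mathrm{Ric}(\partial_{\theta},\partial_{\theta})=\frac{2}{r^{2}}\,r^{2}\sin^{2}\varphi=2\sin^{2}\varphi$. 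Summing,
\begin{equation*}
S \;=\; g^{\varphi\varphi}\mathrm{Ric}(\partial_{\varphi},\partial_{\varphi}) + g^{\theta\theta}\mathrm{Ric}(\partial_{\theta},\partial_{\theta}) \;=\; \frac{2}{r^{2}}+\frac{2}{r^{2}} \;=\; \frac{4}{r^{2}}.
\end{equation*}

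With $S=4/r^{2}$ in hand, the two formulas (\ref{84}) and (\ref{85}) drop out immediately. For horizontal fields, $\mathrm{Ric}(X,Y)=0$ gives
\begin{equation*}
\mathbf{G}(X,Y)\;=\;0-\tfrac{1}{2}\cdot\tfrac{4}{r^{2}}\,\lpr{X}{Y}\;=\;-\tfrac{2}{r^{2}}\,\lpr{X}{Y},
\end{equation*}
while for vertical fields the Ricci and scalar contributions cancel exactly,
\begin{equation*}
\mathbf{G}(V,W)\;=\;\tfrac{2}{r^{2}}\,\lpr{V}{W}-\tfrac{1}{2}\cdot\tfrac{4}{r^{2}}\,\lpr{V}{W}\;=\;0.
\end{equation*}

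There is no real obstacle here: every nontrivial geometric fact (the warped product Ricci decomposition, the Gauss curvature $K(P_{I'})=-1/r^{3}$, the values of the Hessian and Laplacian of $r$) has been set up in the preceding lemmas. The proof is essentially an algebraic assembly, and the only place to be careful is to make sure the signs of $E=(r-1)/r$ and $G=r/(1-r)$ on the strip $r\in]0,1[$ are handled consistently when tracing; but since only the spherical block contributes to $S$, this sign issue never actually bites.
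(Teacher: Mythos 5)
Your proposal is correct and follows essentially the same route as the paper: both take the Ricci values from the preceding lemmas as given, compute $S=4/r^{2}$ by tracing (the paper uses the orthonormal frame $\{\hat{\partial}_{i}\}$ where you use the coordinate basis with $g^{\varphi\varphi}$, $g^{\theta\theta}$ --- an immaterial difference), and then substitute into $\mathbf{G}=Ric-\tfrac{1}{2}S\,\mathbf{g}$. Your closing remark about the signs of $E$ and $G$ on $r\in\,]0,1[$ not affecting the trace is a fair observation that the paper leaves implicit.
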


\begin{proof}
We need to compute the scalar curvature $S$ of $M'$. For that, we use the frame 
$\{\hat{\partial}_{t}, \hat{\partial}_{r}, \hat{\partial}_{\varphi}, \hat{\partial}_{\theta}\}$ to obtain that
$S = \sum_{i = 0}^{3} \epsilon_{i} Ric(\hat{\partial}_{i},\hat{\partial}_{i}) = \frac{2}{r^{2}} + \frac{2}{r^{2}} = \frac{4}{r^{2}}$. 
Using now formula (\ref{83}) together with
$Ric(X,Y) = 0$ and $Ric(V,W) = \frac{2}{r^{2}} \lpr{V}{W}$ it follows formulas  (\ref{84}) and (\ref{85}).
\end{proof}

\vspace{0.2cm}
Since $\lim_{r \to 1} f_{I'}(t,r) = 0$ for all $t \in \real$, we can extend $M \cup M'$ to a  
connected space $M \cup S^{2} \cup M'$ with $S^2 \subset \mathbb R^3$,  where Kepler-Bohr orbits can be obtained for all 
$r \in ]1/2,+\infty[$, because according our Theorem \ref{68} the times $t$ and $\hat{t}$ are equal  and the velocity 
in these orbits satisfies $v_{orb} < 1$. 

\vspace{0.1cm}
\begin{corol} Taking a frame $\{U_{0},U_{1},U_{2},U_{3}\}$ from equation (\ref{84}) it follows that   
$$\mbox{ density} = \mathbf{G}(U_{0},U_{0}) = \frac{-2}{r^{2}}\lpr{U_{0}}{U_{0}} = \frac{2}{r^{2}} \; \; \; \mbox{ and } \; \; \;
\mbox{pressure} = \mathbf{G}(U_{1},U_{1}) = \frac{-2}{r^{2}}$$ 
are the non-null components of the Einstein gravitational tensor $\mathbf{G}$ whose 
contraction is $G_{0}^{0} + G_{1}^{1} = - \displaystyle \frac{4}{r^{2}}$. 
\end{corol}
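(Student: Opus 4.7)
The plan is to specialize the general formulas (\ref{84}) and (\ref{85}) of the preceding theorem to an adapted orthonormal frame and then read off the only non-vanishing components and their trace.

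First, I would choose $\{U_0, U_1, U_2, U_3\}$ to be the orthonormal frame adapted to the warped product $M' = P_{I'} \times_r S^2$. Reading off the line element of $(M',\mathbf{g})$ for $r \in (0,1)$, a natural choice is
$$U_0 = \sqrt{\tfrac{r}{1-r}}\,\partial_t, \quad U_1 = \sqrt{\tfrac{1-r}{r}}\,\partial_r, \quad U_2 = \tfrac{1}{r}\,\partial_\varphi, \quad U_3 = \tfrac{1}{r\sin\varphi}\,\partial_\theta,$$
which yields $\lpr{U_0}{U_0} = -1$, $\lpr{U_i}{U_i} = 1$ for $i = 1, 2, 3$, and all cross products zero. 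By construction $U_0, U_1$ lie in $\mathfrak{L}(P_{I'})$, while $U_2, U_3$ lie in $\mathfrak{L}(S^2)$.

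Next, I would substitute directly into the preceding theorem. Equation (\ref{84}) gives $\mathbf{G}(U_0, U_0) = -\tfrac{2}{r^2}\lpr{U_0}{U_0} = \tfrac{2}{r^2}$ and $\mathbf{G}(U_1, U_1) = -\tfrac{2}{r^2}\lpr{U_1}{U_1} = -\tfrac{2}{r^2}$, while $\mathbf{G}(U_0, U_1) = 0$ by orthogonality of the frame. Equation (\ref{85}) yields $\mathbf{G}(U_2, U_2) = \mathbf{G}(U_3, U_3) = \mathbf{G}(U_2, U_3) = 0$. For the mixed entries $\mathbf{G}(U_i, U_j)$ with one index in $\{0,1\}$ and the other in $\{2,3\}$, the warped product decomposition forces $\mathbf{g}(U_i, U_j) = 0$, while Lemma \ref{79} item (2) supplies $Ric(U_i, U_j) = 0$; hence definition (\ref{83}) gives $\mathbf{G}(U_i, U_j) = 0$. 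So $\mathbf{G}(U_0, U_0) = 2/r^2$ and $\mathbf{G}(U_1, U_1) = -2/r^2$ are indeed the only non-vanishing components, which one identifies with density and pressure as stated.

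Finally, for the contraction I would raise an index using the inverse of the orthonormal metric: $G_0^{\,0} = g^{00} G_{00} = -G_{00} = -\tfrac{2}{r^2}$ and $G_1^{\,1} = g^{11} G_{11} = G_{11} = -\tfrac{2}{r^2}$, so $G_0^{\,0} + G_1^{\,1} = -4/r^2$. Since $G_2^{\,2} = G_3^{\,3} = 0$, this equals the full trace $\mathrm{tr}(\mathbf{G})$, which one can cross-check against $\mathrm{tr}(\mathbf{G}) = -S = -4/r^2$ using the scalar curvature $S = 4/r^2$ computed in the proof of the previous theorem. No genuine obstacle arises: the substantive geometric work was done in the preceding theorem, and the corollary amounts to careful sign bookkeeping in the orthonormal frame, the opposite signs of density and pressure being traced to $g^{00} = -1$.
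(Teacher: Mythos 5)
Your proof is correct and follows the same route the paper intends: the corollary is a direct specialization of equations (\ref{84}) and (\ref{85}) to an adapted orthonormal frame, and your sign bookkeeping (including $\lpr{U_0}{U_0}=-1$, the index raising with $g^{00}=-1$, and the cross-check $G_0^{\,0}+G_1^{\,1}=-S$) is all consistent with the preceding theorem. The paper offers no separate proof of this corollary, and your verification supplies exactly the computation it leaves implicit.
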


\subsection{Geodesic Equations for $M'$ and Some Remarks}

\vspace{0.2cm}
In this subsection we  give our attention to the study of the geodesic equations for $(M', \mathbf{g})$ and we obtain results 
involving its solutions.  In fact, we start proving one of our main results in this paper, namely Lemma \ref{89}, which affirms 
that in our model $(M', \mathbf{g})$ and for $r < 1$ the existence of Kepler's orbitals are not possible. 
\vspace{0.1cm}

\begin{lemma}\label{89}
For $r(\eta) = r_{0} \in ]0,1[$ and $\varphi(\eta) = \pi/2$ for each $\eta \in I$  we cannot have circular geodesic motions.   
\end{lemma}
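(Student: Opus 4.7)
The plan is to assume for contradiction that a circular geodesic with $r(\eta)\equiv r_{0}\in\,]0,1[$ and $\varphi(\eta)\equiv\pi/2$ exists on $(M',\mathbf{g})$, and then to extract a sign obstruction from the radial component of the geodesic equation. Under these constancy hypotheses we have $r'=r''=0$ and $\varphi'=0$, so among the sixteen Christoffel products in $\sum_{i,j=0}^{3}\Gamma^{1}_{ij}\frac{dx^{i}}{d\eta}\frac{dx^{j}}{d\eta}$ only $\Gamma^{1}_{00}(t')^{2}$ and $\Gamma^{1}_{33}(\theta')^{2}$ can survive: the mixed symbol $\Gamma^{1}_{03}$ vanishes because the metric is diagonal and independent of $t,\theta$. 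The $r$-component of the geodesic equation therefore collapses to the single scalar identity
\[
\Gamma^{1}_{00}\,(t')^{2}+\Gamma^{1}_{33}\,(\theta')^{2}=0.
\]

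Next I would compute these two coefficients using the $M'$ data $g_{00}=(r-1)/r$, $g_{11}=r/(1-r)$, $g_{33}=r^{2}\sin^{2}\varphi$, so that $g^{11}=(1-r)/r$, together with the diagonal identity $\Gamma^{1}_{ij}=-\tfrac{1}{2}g^{11}\partial_{r}g_{ij}$ valid for $i=j\neq 1$. A direct calculation gives
\[
\Gamma^{1}_{00}=-\frac{1-r}{2r^{3}},\qquad \Gamma^{1}_{33}\Big|_{\varphi=\pi/2}=-(1-r).
\]
The decisive observation is that for $r=r_{0}\in\,]0,1[$ both coefficients are strictly negative, whence the reduced radial equation becomes
\[
-(1-r_{0})\left[\frac{(t')^{2}}{2r_{0}^{3}}+(\theta')^{2}\right]=0,
\]
a sum of two non-negative terms equated to zero. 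This forces $t'(\eta)\equiv 0$ and $\theta'(\eta)\equiv 0$, hence $\alpha'(\eta)\equiv 0$: the alleged geodesic is constant and describes no motion whatever, a contradiction.

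Conceptually the lemma is really a statement about \emph{why} the argument succeeds on $M'$ but fails on $M$. On the Schwarzschild manifold of Section~3 the simultaneous sign flip of $g_{00}$ and $g_{11}$ (compared with the signs in $M'$) forces $\Gamma^{1}_{00}$ and $\Gamma^{1}_{33}|_{\varphi=\pi/2}$ to have \emph{opposite} signs, which produces precisely the Kepler balance $r^{3}=2L^{2}(r-1)^{2}$ of Theorem~\ref{68}. On $(M',\mathbf{g})$ the two coefficients share a sign throughout $]0,1[$, so no centrifugal term can counterbalance the radial pull and no closed orbit can form. The proof itself is only a few lines; the sole place where one must be careful is the sign bookkeeping of the two Christoffel symbols, and that is routine.
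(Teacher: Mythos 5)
Your proof is correct and follows essentially the same route as the paper's: both reduce the radial geodesic equation under $r'=r''=\varphi'=0$ to $\Gamma^{1}_{00}(t')^{2}+\Gamma^{1}_{33}(\theta')^{2}=0$, compute $\Gamma^{1}_{00}=-\tfrac{1-r}{2r^{3}}$ and $\Gamma^{1}_{33}=-(1-r)$, and conclude from the common negative sign that $t'=\theta'=0$. The only cosmetic difference is the endgame: the paper contradicts $t'=r_{0}/(1-r_{0})\neq 0$ (obtained from the $t$-equation via the immersion $f_{I'}$), whereas you contradict the non-constancy of the curve, which is equally valid.
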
 
\begin{proof}
We proceed by contradiction. It supposes that a geodesic line is possible and that the curve is given parametrically by
$$\alpha(\eta) = 2\sqrt{\frac{1 - r_{0}}{r_{0}}} \left(\cos \frac{t}{2}, \sin  \frac{t}{2}, \cos  \frac{\phi_{0}}{2}, 
\sin \frac{\phi_{0}}{2} \right) \oplus 
r_{0} \left(1, \cos \theta, \sin \theta, 0\right),$$
where $\phi_{0} = \phi(r_{0}) = 1 - r_0 - \ln(1 - r_0)$ is a constant and $t = t(\eta)$. Then taking 
$\alpha(\eta) = \beta(\eta) \oplus \gamma(\eta)$ where 

\vspace{0.2cm}
(i) $\beta(\eta) = 2\sqrt{-E_{0}}N(t(\eta))$ for $N(t) = (\cos \frac{t}{2}, \sin \frac{t}{2}, 
\cos \frac{\phi_{0}}{2}, \sin \frac{\phi_{0}}{2})$, and 

\vspace{0.2cm}
(ii) $\gamma(\eta) = r_{0} L(\theta(\eta))$ for $L(\theta) = (1,\cos \theta, \sin \theta, 0),$

\vspace{0.2cm}
it follows
$$\frac{1}{\sqrt{-E_{0}}} \beta''(\eta) = - \frac{1}{2} \left(\cos \frac{t}{2}, \sin \frac{t}{2}, 0, 0 \right) (t')^{2} + 
\left(-\sin \frac{t}{2}, \cos \frac{t}{2}, 0, 0\right) t''.$$

\vspace{0.2cm}
The geodesic condition $\lpr{\beta''}{f_{t}} = 0$  implies $t''(\eta) = 0$. 
Hence it follows $t(\eta) = a \eta + b$, with $a, b \in \mathbb R$. Therefore, since we have to preserve the directed future, 
we choice 
\begin{equation}\label{3}
\frac{dt}{d\eta} = \frac{r_{0}}{1 - r_{0}} \; \; \; \; \; \mbox{or} \; \; \; \; \; \frac{d\eta}{dt} = \frac{1}{r_{0}} - 1.
\end{equation}

The proof of geodesic equation (21) in our current hypotheses gives the same equal equation, namely,  
$(r_{0})^{2} \theta'(\eta) = L$ for some constant $L$. 

\vspace{0.2cm}

Now we will look for the linking equation for our current case.  The analogous of equation (23) here is 
$$r'' + \Gamma_{00}^{1} (t')^{2} + \Gamma_{11}^{1} (r')^{2} + \Gamma_{22}^{1} (\varphi')^{2} + \Gamma_{33}^{1} (\theta')^{2} = 0$$  
that from our assumptions becomes 
\begin{equation}\label{2}
\Gamma_{00}^{1} (t')^{2} + \Gamma_{33}^{1} (\theta')^{2} = 0. 
\end{equation} 
Since $E = - \vert h \vert$ and $G = 1/ \vert h \vert$ we obtain  $\Gamma_{00}^{1} = - \frac{1}{2}g^{11} (-|h|)_r$, that is
$$\Gamma_{00}^{1} = \frac{1 - r}{2r}\left(- \frac{\partial}{\partial r} \frac{-1 + r}{r}\right) = - \frac{1 - r}{2r^{3}} < 0.$$ 
Analogously, since  $g_{33} = r^{2}$,  one gets    
$$\Gamma_{33}^{1} = \frac{1 - r}{2r}(-2r) = -(1 - r) < 0.$$ 

Now since $\Gamma_{00}^{1}$ and $\Gamma_{33}^{1}$ are negative, it follows from equation (\ref{2}) that $t' = 0 = \theta'$. 
But from equation (\ref{3}) one has  $t' \neq 0$. Thus  we have a contradiction. Therefore, we can not have uniform circular motions 
for $0 < r < 1$.  
\end{proof} 

We will call the statement of Lemma \ref{89} {\it the absolute orbital impedance for $Z$-atoms}. 

\vspace{0.2cm}
Finally, we also note that the gravitational mass that \lq \lq cause" equal orbital singularity $r = r_{0}$ given by 
Kepler-Newton gravitational Theory and by Kepler-Bohr construction of the hydrogen atom Theory, is given by the equation 
\begin{equation}
\mathcal{M}G = \frac{e^{2}}{4 \pi \epsilon_{0} m_{e}} \; \; \mbox{ obtained from } \; \; 
r_{0} = \frac{e^{2}}{4 \pi \epsilon_{0} m_{e}c^{2}} = \frac{\mathcal{M}G}{c^{2}}. 
\end{equation} 
 
\vspace{0.3cm}

 Next we give an interpretation for our models $(M, \mathbf{g})$, $(M', \mathbf{g})$ and its junction as a geometric ideal hydrogen atom $H^{1}$. The external part of the atom will be described as the Schwarzchild spacetime $(M, \mathbf{g})$ which is Ricci-flat and $\mathbf{ G} =0$ (in a vacuum) and the inner part of the atom will be described by our Lorentzian spacetime $(M', \mathbf{g})$  which is non-Ricci flat with Einstein tensor $\mathbf{ G} \ne 0$ then with matter. 

For that we start observing that using second relation of Equation (\ref{19}) and the first of Equation (\ref{20}), it follows that the radius $r$ corresponding for the $n$-orbit is such that $r =  a_0 n^2$, where $a_0$ is the Bohr radius. Then Equation (\ref{28}) for $Z = 1$ and for $n = 1$ until $n > 1$, becomes to 
$$h \nu(1,n) = \frac{e^{2}}{8 \pi \epsilon_{0}} \left\{\frac{1}{a_{0}} - \frac{1}{a_{0} n^2}\right\},$$  
which we can see as the storage energy in a spherical capacitor. Hence we define: 
 
\begin{dfn}
Our geometric ideal hydrogen atom $H^{1}$ is the Bohr's hydrogen atom interpreted as a spherical electric capacitor 
of energy given by the Einstein's energy-stress tensor $\mathbf{T} = k \mathbf{G}$, where its nucleon is described by the model given by the Equation (\ref{012}), and where  
its exterior Kepler-Bohr orbital is modeled by equation (\ref{011}), obeying the postulates of Bohr written 
in the Subsection 2.5.
\end{dfn}

Our interest is now to explore more the solutions of  the geodesic equations under 
the restrictions $\varphi= \pi/2$ and $r(\eta) \in ]0, 1[$.

\begin{theor}
Let $\alpha(\eta) = \beta(\eta) \oplus \gamma(\eta)$ be a solution of the geodesic equations (\ref{29}), where 
it is assumed $\varphi = \pi/2$ and that the correspondent equations (20), (21) and (23), for the constants $L \neq 0$, are satisfied. 
Let $\E(\alpha) = \lpr{\alpha'}{\alpha'}$. Then
\begin{equation}\label{4}
\eta = \pm \int \sqrt{\frac{r^{3}}{r^{3} + (\E(\alpha)r^{2} - L^{2})(r - 1)} \;} \, dr.
\end{equation}

Equivalently, the equation for $r'(\eta)$ is given by 
\begin{equation}\label{5}
(r'(\eta))^{2} = \frac{P(r(\eta))}{r^{3}(\eta)} \; \; \; \mbox{ for } \; \; \;
P(r) = \left[1 + \E(\alpha)\right] r^{3} - \E(\alpha) r^{2} - L^{2}(r - 1).
\end{equation}
\end{theor}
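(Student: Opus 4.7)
The strategy is to reduce the four geodesic equations, under the symmetry hypotheses, to a single first-order ODE for $r(\eta)$ by exploiting three first integrals: the conserved $t$-momentum from equation (20), the angular momentum $L$ from equation (21), and the norm $\E(\alpha)=\lpr{\alpha'}{\alpha'}$, which is constant along a geodesic parametrized by a proper parameter.

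First I would record that the equatorial assumption $\varphi \equiv \pi/2$ collapses the angular part of the line element to $r^{2}(\theta')^{2}$ and forces $\sin\varphi = 1$ in equation (21), so that $\theta'(\eta)=L/r^{2}$; simultaneously, equation (20) (read in the form appropriate to $(M',\mathbf{g})$) expresses $t'(\eta)$ as an explicit rational function of $r$. These two substitutions eliminate $t$ and $\theta$ from the expression for the norm, leaving a scalar identity involving only $r$, $r'$, $\E(\alpha)$, and $L$.

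Next, expanding $\E(\alpha)=\lpr{\alpha'}{\alpha'}$ using the metric tensor of $(M',\mathbf{g})$ and the substitutions above produces an algebraic relation of the schematic form
\begin{equation*}
\E(\alpha) \;=\; A(r) \;+\; B(r)\,(r')^{2} \;+\; \frac{L^{2}}{r^{2}},
\end{equation*}
where $A,B$ are rational functions of $r$ coming from $g_{00}(t')^{2}$ and $g_{11}$ respectively. Isolating $(r')^{2}$, multiplying through by $r^{3}$, and collecting the resulting terms in powers of $r$ yields
\begin{equation*}
r^{3}(r')^{2} \;=\; [1+\E(\alpha)]\,r^{3} \;-\; \E(\alpha)\,r^{2} \;-\; L^{2}(r-1) \;=\; P(r),
\end{equation*}
which is equation (\ref{5}). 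Taking the positive or negative square root and separating variables gives $d\eta/dr = \pm\sqrt{r^{3}/P(r)}$, whose indefinite integral is equation (\ref{4}).

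Two remarks on the logical status of the hypotheses: equation (23) need not be used as an independent input, since differentiating $\E(\alpha)=\mathrm{const}$ and substituting equations (20), (21) recovers (23); so (23) serves essentially to fix initial data rather than to constrain the first-order reduction. The principal technical obstacle is purely bookkeeping — tracking the sign flips produced by $(r-1)$ versus $(1-r)$ in the metric components of $P_{I'}$, and ensuring that the cross-cancellation between $g_{00}(t')^{2}$ and the constant $-1/h$ coming from the effective-parameter normalization combines cleanly with the $L^{2}/r^{2}$ centrifugal term so that $P(r)$ emerges in exactly the stated cubic form.
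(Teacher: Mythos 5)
Your proposal is correct and follows essentially the same route as the paper: both reduce the system to a single first-order equation for $r$ by combining the first integrals $t'=r/(r-1)$ and $r^{2}\theta'=L$ with the constancy of $\E(\alpha)=\lpr{\alpha'}{\alpha'}$, then isolate $(r')^{2}$, multiply by $r^{3}$, and separate variables. The only cosmetic difference is that the paper evaluates $\lpr{\alpha'}{\alpha'}$ through the explicit immersion $\alpha=\beta\oplus\gamma$ into $\spac\oplus\mink$ (using $\lpr{N}{N}=\lpr{N}{N'}=0$) rather than reading the coefficients off the line element, arriving at the same identity $\E(\alpha)-L^{2}/r^{2}=\frac{r}{r-1}\left((r')^{2}-1\right)$; your closing caution about the $(r-1)$ versus $(1-r)$ signs is exactly the bookkeeping point on which the stated form of $P(r)$ depends.
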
 

\begin{proof}
First we observe that $\lpr{\alpha'}{\alpha'} = \lpr{\beta'}{\beta'} + \lpr{\gamma'}{\gamma'}$, where 
$$\beta(\eta) = 2 \sqrt{\frac{1-r}{r}} \left(\cos \frac{t}{2}, \sin \frac{t}{2}, \cos \frac{\phi}{2}, \sin \frac{\phi}{2}\right) = 
2 \sqrt{\frac{1 - r}{r}}\; N$$
and
$$\gamma (\eta) = r \left(1, \cos \theta, \sin \theta, 0\right).$$ 
After a simple calculation one has $\left<N, N\right> = 0 = \left<N, N' \right>$ and $\left<N' , N'\right> = (\frac{\phi'}{2})^2 - 
(\frac{t'}{2})^2.$ Now, since $\phi (r(\eta)) = 1 - r(\eta) - \ln(1-r(\eta))$ and  
equations (20), (21) are satisfied, we have 
$\E(\alpha) = \left< \alpha', \alpha'\right> = - b^{2} + c^{2}$, where
$$-b^{2} = \lpr{\beta'}{\beta'} = \frac{r}{r-1}( (r')^{2} -1) \; \; \mbox{ and } \; \; 
c^{2} = \lpr{\gamma'}{\gamma'} = \frac{L^{2}}{r^{2}}.$$ 

Now, substituting $b^{2} = c^{2} - \E(\alpha)$ in the last equation above it follows that
$$\frac{L^{2}}{r^{2}} - \E(\alpha) = \frac{r}{r - 1}(1 - (r')^{2}),$$
which implies 
\begin{equation}\label{6}
\frac{dr}{d\eta} = \pm \sqrt{1 + \left(\E(\alpha) - \frac{L^{2}}{r^{2}}\right) \frac{r - 1}{r}},
\end{equation}
and so the equation (\ref{4}) follows easily.  From equation (\ref{6}) one now has
$$(r')^{2} = \frac{1}{r^{3}} \left[r^{3} + \E(\alpha) r^{2}(r - 1) - L^{2} (r - 1) \right],$$ 
which implies that equation (\ref{5}) holds. 
\end{proof}

\subsection{Consequences of equation (23) and (\ref{5})}
Here we obtain some results to geodesic line $\alpha(\eta)$ with the condition $\varphi = \frac{\pi}{2}$ and $r(\eta) \in ]0,1[$.

\begin{corol}
Except for periodic orbits, the geodesic line $\alpha(\eta)$ is asymptotically Newtonian if, and only if, 
$\E(\alpha) = -1$. 

Moreover, when $\varphi = \pi/2$  the linking equation can be written as
\begin{equation}\label{10}
r'' = \frac{\E(\alpha)}{2r^{2}} + \frac{L^{2}}{2r^{4}}(2r - 3),
\end{equation} 

and the equation for $\hat{\gamma}(\eta)$ in Proposition \ref{12} takes the form
\begin{equation}\label{11}
\hat{\gamma}''(\eta) + \frac{1}{2r^{3}}\left(-\E(\alpha) + \frac{3L^{2}}{r^{2}}\right) \hat{\gamma}(\eta) = 0. 
\end{equation} 

Furthermore, a necessary and sufficient condition to extend a solution of equation (\ref{11}) to a geodesic line of 
$(M,\mathbf{g})$ is that the function $r(\eta)$ satisfies the equation (\ref{10}). 
\end{corol}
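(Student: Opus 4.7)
The plan is to dispatch the four claims in order, using equations (20)--(23) from Theorem \ref{30}, the geometric identity (\ref{60}) from Theorem 3.19, and equations (\ref{5})--(\ref{6}) from the theorem immediately preceding the corollary. Throughout I impose $\varphi(\eta) \equiv \pi/2$, so that $\varphi' \equiv 0$ and $\sin\varphi \equiv 1$; equation (21) then yields $\theta'(\eta) = L/r^{2}$, and consequently every angular contribution enters the system only through the combination $L^{2}/r^{4}$.

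For the asymptotic--Newtonian equivalence I would rewrite (\ref{5}) in the form
$$(r'(\eta))^{2} \;=\; [1 + \E(\alpha)] \;-\; \frac{\E(\alpha)}{r} \;-\; \frac{L^{2}(r-1)}{r^{3}},$$
and observe that the last two terms vanish as $r \to +\infty$. In the spirit of Corollary \ref{71}, the hypothesis of asymptotic Newtonian behaviour forces $dr/d\eta \to 0$ at spatial infinity, so necessarily $1 + \E(\alpha) = 0$; the converse is automatic. The exclusion of periodic orbits is essential, since for such orbits $r(\eta)$ stays confined to a bounded interval and the limit $r \to +\infty$ never occurs, so the argument does not apply to them.

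For the reduced linking equation I would specialise (23) to $\varphi = \pi/2$, obtaining
$$r'' \;+\; \frac{1 - (r')^{2}}{2r(r-1)} \;=\; \frac{(r-1)L^{2}}{r^{4}}.$$
Using (\ref{6}) in the rearranged form $1 - (r')^{2} = \bigl(L^{2}/r^{2} - \E(\alpha)\bigr)(r-1)/r$, the middle term simplifies to $L^{2}/(2r^{4}) - \E(\alpha)/(2r^{2})$, and combining with the right-hand side produces exactly (\ref{10}). For the vector equation (\ref{11}) I would start from the identity (\ref{60}) of Theorem 3.19, which in our hypotheses reads $\hat{\gamma}''(\eta) = (1/r)\bigl[r'' - L^{2}/r^{3}\bigr]\hat{\gamma}(\eta)$, and substitute $r''$ using (\ref{10}). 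The $L^{2}/r^{3}$ term cancels against part of $L^{2}(2r-3)/(2r^{4})$, leaving precisely the coefficient $-(1/(2r^{3}))\bigl(-\E(\alpha) + 3L^{2}/r^{2}\bigr)$, which is (\ref{11}).

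For the last assertion I would observe that every implication in the preceding derivation is reversible. Given any solution $\hat{\gamma}(\eta)$ of (\ref{11}) whose radial coefficient is $r(\eta)$, the identity (\ref{60}) forces $(1/r)[r'' - L^{2}/r^{3}]$ to coincide with the prescribed coefficient in (\ref{11}); a direct algebraic comparison shows that this coincidence holds if, and only if, $r(\eta)$ satisfies (\ref{10}). The main obstacle I anticipate is expository rather than computational: equation (\ref{11}) is really a coupled system in which $r(\eta)$ appears simultaneously as a coordinate of $\hat{\gamma}$ and as a coefficient of the ODE, so one must carefully distinguish between treating $r$ as prescribed (yielding a linear equation for the angular part alone) and letting it be determined dynamically (in which case (\ref{10}) is exactly the compatibility condition needed for the curve to lift to a geodesic of $(M,\mathbf{g})$).
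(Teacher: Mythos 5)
Your proposal is correct, and for the two displayed formulas it lands on essentially the same computations as the paper, with one small divergence of route: the paper obtains (\ref{10}) in a single step by differentiating the first integral $(r')^{2}=P(r)/r^{3}$ of equation (\ref{5}), whereas you obtain it by specializing the linking equation (23) to $\varphi=\pi/2$ and eliminating $1-(r')^{2}$ via (\ref{6}); the two are equivalent since (\ref{5}) is itself the energy first integral of (23), but the paper's differentiation is a one-line computation while yours makes the role of the linking equation explicit. For (\ref{11}) you do exactly what the paper does, namely substitute $r''$ from (\ref{10}) into the identity (\ref{60}) with $\varphi'=0$ to get $ac(\eta)=\frac{1}{2r^{3}}\bigl(\E(\alpha)-\frac{3L^{2}}{r^{2}}\bigr)$. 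Where you go beyond the paper is that its printed proof stops there: it gives no argument for the asymptotic-Newtonian equivalence $\E(\alpha)=-1$ nor for the final extension claim. Your limit argument ($(r')^{2}\to 1+\E(\alpha)$ as $r\to+\infty$, with the periodic-orbit exclusion explaining why boundedness makes the criterion inapplicable) and your observation that (\ref{10}) is precisely the compatibility condition making the scalar coefficient in (\ref{11}) agree with the one forced by (\ref{60}) are both sound and fill real gaps in the paper's exposition.
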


\begin{proof}
Formula (\ref{10}) follows directly from deriving  equation (\ref{5}).  Now, formula (\ref{11}) 
follows from considering the curve in rectangular coordinates $\hat \gamma = (x,y,z)$ as in Proposition \ref{12} 
to get through of equation (\ref{60}) that  $$ac(\eta) = \frac{1}{2r^3}[ \E(\alpha) - \frac{3L^2}{r^2}].$$ 
\end{proof} 

Next, we will give an example of a lightlike plane section of the lightlike cone in $\mathbb{L}(\partial_{3})$, 
which can not be extended to a geodesic line of the Schwarzschild spacetime. In order to do that, we establish 
the following proposition.

\begin{prop}\label{01}
The parametric curve $\hat{\gamma}(\eta) = (0, x(\eta), y(\eta), 0)$ satisfies the condition of Proposition \ref{12} 
if, and only if, 
$x y' - x' y = L \in \real.$ Moreover, 
the lifting curve $\gamma = (\pm \sqrt{x^{2} + y^{2}}, x, y, 0)$ maps $I$ into $\C$.
\end{prop}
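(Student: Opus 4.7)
The statement has two parts, and both are short calculations once Proposition 3.21 is set up. Here is how I would organize them.

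First, I unpack what the condition of Proposition 3.21 says for the curve $\hat{\gamma}(\eta) = (0, x(\eta), y(\eta), 0)$. Proposition 3.21 asks for $\hat{\gamma}''(\eta) = ac(\eta)\,\hat{\gamma}(\eta)$, i.e.\ for $\hat{\gamma}''$ to be parallel (as a vector in $\euclidean$) to $\hat{\gamma}$. Since $\hat{\gamma}'' = (0, x'', y'', 0)$ and $\hat{\gamma} = (0, x, y, 0)$ both live in the plane $\mathrm{Span}\{\partial_1, \partial_2\}$, the parallelism condition collapses to the $2$-dimensional determinant condition
\begin{equation*}
x\,y'' - x''\,y \;=\; 0.
\end{equation*}

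Second, I recognize this as an exact derivative. A direct computation gives
\begin{equation*}
\frac{d}{d\eta}\bigl(xy' - x'y\bigr) \;=\; x'y' + xy'' - x''y - x'y' \;=\; xy'' - x''y.
\end{equation*}
Hence $xy'' - x''y = 0$ holds on $I$ if and only if the function $xy' - x'y$ is constant, i.e.\ equal to some $L \in \real$. This proves the biconditional. (This is of course the analogue in rectangular coordinates of the constant of motion $r^2\theta' = L$ encountered earlier in Lemma 1.1 and in equation (21) of Theorem 3.8.)

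Third, for the lifting claim I simply verify the Lorentz norm of $\gamma = (\pm\sqrt{x^2+y^2},\, x,\, y,\, 0)$. Using the form of $\lpr{\cdot}{\cdot}$ recalled in Section 2,
\begin{equation*}
\lpr{\gamma}{\gamma} \;=\; -\bigl(\pm\sqrt{x^2+y^2}\bigr)^{2} + x^{2} + y^{2} + 0 \;=\; -(x^{2}+y^{2}) + (x^{2}+y^{2}) \;=\; 0,
\end{equation*}
so $\gamma(\eta) \in \C$ for every $\eta \in I$ (with the $+$ sign giving the future-directed branch, in accordance with the definition of $\C$ in Subsection 2.2). No obstacle arises here; both parts are immediate once Proposition 3.21 is read as the parallelism of $\hat\gamma''$ with $\hat\gamma$, and the only subtlety worth flagging is the sign choice in the lifting, which simply selects the future-directed sheet of the lightcone.
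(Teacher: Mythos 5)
Your proposal is correct, and it is essentially the argument the authors intend: the paper states this proposition without any written proof, and the natural justification is exactly what you give, namely that parallelism of $\hat{\gamma}''$ and $\hat{\gamma}$ in the plane $\mathrm{Span}\{\partial_{1},\partial_{2}\}$ reduces to $xy'' - x''y = 0$, which is the derivative of $xy' - x'y$ (the rectangular-coordinate form of $r^{2}\theta' = L$ from Lemma 1.1), while the lifting claim is the one-line Lorentz-norm computation $\lpr{\gamma}{\gamma} = -(x^{2}+y^{2}) + x^{2} + y^{2} = 0$. Your remark that only the $+$ branch lands on the future-directed cone $\C$ as defined in Subsection 2.2, and the implicit use of $\hat{\gamma} \neq 0$ (valid here since $r > 0$) to pass from the determinant condition to $\hat{\gamma}'' = ac(\eta)\hat{\gamma}$, are both appropriate and slightly more careful than the paper itself.
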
 
 
Then we affirm the next. 
\begin{example}
The lightlike plane section of the lightcone of \ $\mathbb{L}(\partial_{3})$ given by 
\begin{equation}\label{14}
\gamma(x) = \left(\frac{x^{2} + a^{2}}{2a}, x, \frac{x^{2} - a^{2}}{2a}, 0\right), \; \; \mbox{ for } \; \; a > 0,
\end{equation}
cannot be extended to a geodesic line of $(M,\mathbf{g})$. 

\vspace{0.2cm}
To check that we start taking  the branch $\gamma(\eta) = (r(\eta), \sqrt{2 a r(\eta) - a^{2}}, r(\eta) - a, 0)$ 
and assume that the parameter $\eta$ is such that the geodesic equations hold with $L$ given by 
$x y' - x' y = L \in \real$, where $y = r - a$. According Proposition \ref{01}, one has $x = \sqrt{2ar - a^2}$ and $y = r - a$. 
In this case, $\varphi = \pi/2$. Now, one calculates $x'$, $y'$ and after a directed computation we get \ 
$x y' -y x' = \displaystyle \frac{ar r'}{\sqrt{2ar - a^2}} =L$.  From the latter relation it follows that 
$r' = \displaystyle \frac{L}{a r}\sqrt{2 a r - a^{2}}$ and hence  $$r'' = \frac{L^{2}}{a r^{3}}(a - r).$$
Replacing $r''$ in the equation (\ref{10}) it follows that $\E(\alpha)$ cannot be a constant function, and so the curve (\ref{14}) 
cannot be extended to a geodesic line of $(M,\mathbf{g})$. 
\end{example}

\vspace{0.2cm}

\bigskip

\end{document}